\documentclass[12pt]{amsart}
\usepackage[margin=1in]{geometry}

\usepackage{url,amsmath,amssymb,mathrsfs}
\usepackage{enumitem}
\usepackage{commath}
\usepackage{physics}
\usepackage{nicefrac}
\usepackage{amsthm}
\usepackage{xfrac}
\usepackage[all]{xy}
\usepackage{graphicx}
\usepackage[normalem]{ulem}
\graphicspath{ {./images/} }

\newtheorem{theorem}{Theorem}[section]
\newtheorem{lemma}[theorem]{Lemma}
\newtheorem{proposition}[theorem]{Proposition}
\newtheorem{corollary}[theorem]{Corollary}

\newtheorem{innercustomgeneric}{\customgenericname}
\providecommand{\customgenericname}{}
\newcommand{\newcustomtheorem}[2]{%
	\newenvironment{#1}[1]
	{%
		\renewcommand\customgenericname{#2}%
		\renewcommand\theinnercustomgeneric{##1}%
		\innercustomgeneric
	}
	{\endinnercustomgeneric}
}
\newcustomtheorem{customthm}{Theorem}
\newcustomtheorem{customlemma}{Lemma}

\theoremstyle{definition}
\newtheorem{example}[theorem]{Example}
\newtheorem{definition}[theorem]{Definition}

\newtheorem*{remark}{Remark}

\newcommand{\bZ}{\mathbb Z}
\newcommand{\bQ}{\mathbb Q}
\newcommand{\bN}{\mathbb N}
\newcommand{\bNo}{{{\mathbb N}_0}}

\newcommand{\Rbar}{{\overline{R}}}

\newcommand{\Inv}{\textup{Inv}}
\newcommand{\Prin}{\textup{Prin}}
\newcommand{\Cl}{\textup{Cl}}

\newcommand{\modulo}[1]{\textup{ (mod }#1)}

\newcommand{\Irr}{\textup{Irr}}
\newcommand{\term}[1]{\textbf{\textup{#1}}}

\newcommand{\quot}[2]{\large\sfrac{#1}{#2}\normalsize}

\newcommand{\Nil}{\textup{Nil}}

\title[Associated Polynomials and Power Series]{(Locally) Associated Subrings in Polynomial and Power Series Extensions}
\author{Grant Moles\and Joseph Swanson}

\begin{document}

\begin{abstract}
    The associated, ideal-preserving, and locally associated properties of subrings, first formally defined in 2024, give a way of understanding the multiplicative structure of a subring given information about the larger ring. In this paper, we establish notation and preliminary results on a generalized type of polynomial and power series rings that is often used in the construction of counterexamples in the field of commutative algebra. We then provide sets of necessary and sufficient conditions under which such a polynomial or power series ring may be (locally) associated in a larger such ring. This allows for the production of several informative examples, as well as a better understanding of the circumstances under which the ring of formal power series $R[[x]]$ over an order $R$ in a number field may be half-factorial.
\end{abstract}

\maketitle

\section{Introduction}

In the study of factorization theory, there is perhaps no context that is more well understood and which has enjoyed such lasting historical interest than that of the rings of integers in algebraic number fields. Indeed, it was in this context (specifically in cyclotomic number fields) that Ernst Kummer demonstrated that unique factorization into irreducible elements fails in general, rebuffing Gabriel Lam\'e's now-infamous failed proof of Fermat's Last Theorem. Classically, Kummer went on to define ``ideal numbers" (the precursor to our modern understanding of an ideal in a ring) and the ideal class group, which allowed for the restoration of unique factorization in the set of ideals. In the latter half of the $20^{th}$ century, the ideal class group provided exactly the tool needed for an explosion in the understanding of factorization in rings of algebraic integers. Beginning with Carlitz in 1960 (\cite{carlitz}) and culminating with Valenza and Narkiewich in the 1990s (\cite{valenza} and \cite{narkiewicz}, respectively), this led to a full characterization of the elasticity of any ring of algebraic integers using the Davenport constant.

Since these beginnings, factorization theory has expanded to far more rings and monoids and explored far more properties (see, for instance, \cite{andersonandersonzafrullah}, \cite{ChapmanFactorizationTheory}, \cite{GeroldingerFactorizationSurvey}, \cite{coykendallpruferdomains}). One particularly interesting case that has received widespread attention is that of orders within algebraic number fields. Halter-Koch in \cite{halter-koch} provided a characterization of which orders in a quadratic number field are half-factorial; this was later generalized beyond the quadratic case by Rago in \cite{rago}. Investigation into the elasticity (among other factorization properties) of orders has been a fruitful direction of inquiry (see, for instance, \cite{choi2024class}, \cite{KettingerMoles2025elasticity}, \cite{SRelativeDavenport}). Perhaps most telling is a common thread that these investigations all seem to have in common: in one way or another, they leverage what is known about the full ring of integers in which the order is contained to give information about the order itself.

It was precisely this observation which led to the definition of three types of subrings (or equivalently, three types of ring extensions) in \cite{dissertation} and \cite{subringrelations}: associated, ideal-preserving, and locally associated subrings. The immediate advantages of studying these relations seemed to primarily lie in understanding the relationship between an order in a number field and the full ring of algebraic integers. However, it was shown in \cite{radicalconductor} that this utility also extended to the rings of formal power series over these rings. In particular, they helped to determine the elasticity of orders and their rings of formal power series, including when such rings may (or may not) be half-factorial. These subring relations will be defined and discussed more thoroughly in Section 2 below.

Another area of factorization theory (and commutative algebra in general) which has received a great deal of historical attention is the investigation of rings of polynomials and rings of formal power series. Generally, these questions tend to follow the following format: given information about a ring $R$, what properties can we determine about $R[x]$ or $R[[x]]$? (For instance, see \cite{gilerpolynomials}, \cite{coykendallextensions}, \cite{coykendallpolynomial}, \cite{radicalconductor}.) However, when searching for unexpected or ``bad" behavior of ring properties, attention is often given to subrings in which the ring of available coefficients is allowed to change (in particular, to expand) with higher powers of the variable. In \cite{anderson}, for example, rings of the form $K+xL[x]$, where $K\subsetneq L$ are fields (i.e., the subring of $L[x]$ whose constant coefficients are restricted to $K$), are shown to give one of the most readily-accessible examples of HFDs which are not UFDs. Similarly, $\bZ+x\bQ[x]$ gives a readily-accessible example of an integral domain which fails to be atomic. Constructions of this form can be found throughout the literature; for instance, see \cite{coykendallA+XBX}, \cite{changA+XBX}, \cite{gottiascentofquasi}.

In \cite{subringrelations}, it was shown that a similar construction, in particular the ring $\bZ+x\bZ+x^2\bQ[x]$, produced unexpected behavior with respect to the locally associated property (this behavior is explored more fully in Example \ref{leading example} below). Partial results regarding when a standard ring of polynomials or power series might be a locally associated subring of a larger polynomial or power series ring were produced in \cite[Theorems 4.1-4.3]{subringrelations}, though they failed to provide full characterizations of these properties and primarily served to generalize findings from the case of orders in a number field. In pursuit of a better understanding of both the associated and locally associated properties of subrings and these polynomial-like ring constructions, we investigate when extensions of such rings are associated or locally associated. We will also explore the consequences of these findings and use them to draw conclusions and settle conjectures about more familiar cases. The major results of this paper consist of at least partial characterizations of when these properties are observed in such ring extensions and can be found in Theorems \ref{associated polynomials}, \ref{la polynomial}, \ref{associated ps sufficient}, \ref{associated ps necessary}, and \ref{la power series}.

To avoid confusion on the part of the reader, we specify some of the notation and conventions we will be using throughout this paper:
\begin{itemize}
    \item $\bN$ refers to the set of natural numbers $\{1,2,3,\dots\}$.
    \item $\bN_0$ refers to the set of whole numbers $\{0,1,2,3,\dots\}$.
    \item A \term{ring} is not assumed to be commutative or to have identity; when those properties apply, they will be specified.
    \item For a commutative ring with identity $R$, $U(R)$ refers to the \term{group of units} in $R$.
    \item For an integral domain $R$, $\Rbar$ refers to the \term{integral closure} of $R$ in its quotient field.
    \item For a commutative ring $R$ and a multiplicatively closed subset $S\subseteq R$ containing no zero divisors, $R_S:=S^{-1}R=\{s^{-1}r|r\in R,s\in S\}$ is the \term{localization} of $R$ at $S$.
    \item For subrings $R$ and $T$ of the same commutative ring, the \term{conductor} from $T$ into $R$ is the set $(R:T):=\{r\in R|rT\subseteq R\}$; that is, $(R:T)$ is the set of all elements in $R$ which conduct $T$ into $R$. Note that we do not require $R$ to be a subring of $T$, nor do we require that elements of $(R:T)$ exist in $T$. Note that $(R:T)$ is an ideal of $R$, and when $(R:T)\subseteq T$, it is an ideal of $T$ as well.
    \item For any integral domain $R$, we denote by $\Cl(R)$ the group of invertible fractional ideals $\Inv(R)$ modulo the subgroup of principal fractional ideals $\Prin(R)$, called the \term{ideal class group} (or \term{Picard group}) of $R$. The ideal class containing $J\in \Inv(R)$ is denoted $[J]$.
    \item For an atomic domain $R$, we denote by $\rho(R)$ the \term{elasticity} of $R$, defined in \cite{valenza}.
\end{itemize}

The remainder of this paper is laid out as follows. Section 2 more thoroughly explores the associated, ideal-preserving, and locally associated properties of subrings and generalizes a previously-known characterization of locally associated subrings. Section 3 investigates polynomial and power series extensions in which the rings of coefficients are allowed to change with higher powers of $x$. This includes defining a notation for such rings and investigating the units and conductor ideals in such rings and ring extensions. Finally, Sections 4 and 5 apply the findings from Section 3 to determine when extensions of these polynomial rings and power series rings, respectively, may be associated or locally associated.

\section{(Locally) Associated Subrings}

For the convenience of the reader, we recall the definitions of associated and locally associated subrings from \cite{subringrelations}. Although we will use it far less prominently in this paper, we also include the definition of an ideal-preserving subring for completeness and to have ready access to the terminology.

\begin{definition}
    \label{associated}
    Let $T$ be a commutative ring with identity and $R\subseteq T$ a subring. We say that $R$ is an \term{associated subring} of $T$ if either of the following equivalent conditions holds:
    \begin{enumerate}
        \item $T=R\cdot U(T)$; that is, for any $t\in T$, there exist $r\in R$ and $u\in U(T)$ such that $t=ru$.
        \item For any $t\in T$, there exists some $u\in U(T)$ such that $tu\in R$.
    \end{enumerate}
\end{definition}

\begin{definition}
    \label{locally associated}
    Let $T$ be a commutative ring with identity, $R\subseteq T$ a subring with identity, and $I=(R:T)$. We say that $R$ is a \term{locally associated subring} of $T$ if any of the following equivalent conditions hold:
    \begin{enumerate}
        \item For any $t\in T$ comaximal to $I$ (i.e., $tT+I=T$), there exists $r\in R$ comaximal to $I$ (i.e., $rR+I=R$) and $u\in U(T)$ such that $t=ru$.
        \item Every coset in $\quot{U(\sfrac{T}{I})}{U(\sfrac{R}{I})}$ contains a unit in $T$; that is, for any $t+I\in U(\sfrac{T}{I})$, there exists some $r+I\in U(\sfrac{R}{I})$ and $\beta\in I$ such that $tr+\beta\in U(T)$.
        \item $\quot{U(T)}{U(R)}\cong \quot{U(\sfrac{T}{I})}{U(\sfrac{R}{I})}$ via the isomorphism $\phi:\quot{U(T)}{U(R)}\to \quot{U(\sfrac{T}{I})}{U(\sfrac{R}{I})}$ defined by $\phi(u\cdot U(R))=(u+I)U(\sfrac{R}{I})$
    \end{enumerate}
\end{definition}

\begin{definition}
    \label{ideal-preserving}
    Let $T$ be a commutative ring and $R\subseteq T$ a subring. We say that $R$ is an \term{ideal-preserving subring} of $T$ if, for any pair of ideals $J_1$ and $J_2$ of $T$ with $J_1\nsubseteq J_2$, $R\cap J_1\nsubseteq J_2$.
\end{definition}

Since these relationships seem to be at the height of their power when $R$ is an order in an algebraic number field and $T=\Rbar$ is the full ring of integers, we often say in this case that $R$ is an associated (similarly, locally associated or ideal preserving) order.

The concept of an associated subring seems to have originated in the study of factorization properties of orders in an algebraic number field. In particular, \cite[Theorem 6]{halter-koch} included this property as one of the necessary and sufficient conditions for an order in a quadratic number field to be half-factorial. Later, \cite[Theorem 1.1]{rago} showed that this property was necessary for an order in any number field to be half-factorial. More generally, \cite[Corollary 3.5]{radicalconductor} showed that if $R$ is an order such that $\rho(R)=\rho(\Rbar)$, then $R$ must be a locally associated order; the same paper also used the associated condition as one of a list of sufficient conditions for $\rho(R)=\rho(\Rbar)$ and for $\rho(R[[x]])=\rho(\Rbar[[x]])$, though it was later shown in \cite[Proposition 4.3]{SRelativeDavenport} that the associated condition is not necessary for $\rho(R)=\rho(\Rbar)$.

One of the earliest observations in the study of these subring relations is that any associated order must also be both ideal-preserving and locally associated; in fact, \cite[Corollary 4.13]{subringrelations} shows that the converse holds as well. In addition, if $R\subseteq S\subseteq \Rbar$ and $R$ is an associated (similarly, locally associated or ideal-preserving) order, then $S$ must share this same property. Although these are straightforward observations in the case of orders in a number field, \cite[Theorem 2.11]{subringrelations} demonstrates that the locally associated property in a general ring extension may be less well-behaved.

\begin{example}
    \label{leading example}
    Let $R=\bZ[x]$, $S=\bZ+x\bZ+x^2\bQ[x]$ and $T=\bQ[x]$. That is, $S$ is the subring of $T$ whose constant and linear coefficients are restricted to $\bZ$. Since any element of $T$ can be multiplied by an appropriate integer to clear denominators, $R$ (and by extension, $S$) is an associated subring of $T$. Moreover, $(R:T)=\{0\}$, so characterization (3) in the definition of a locally associated subring makes it clear that $R$ is a locally associated subring of $T$. On the other hand, $I:=(S:T)=(x^2)$, and we may observe that $t:=2+3x\in T$ is comaximal to $I$. However, the only elements of $S$ which are comaximal to $I$ must have constant term $\pm 1$, so no unit $u\in U(T)=\bQ\backslash\{0\}$ exists such that $ut$ is an element of $S$ comaximal to $I$.

    This shows that not only may a subring $S\subseteq T$ exist which is associated without being locally associated, it is also possible to construct a tower $R\subseteq S\subseteq T$ in which $R$ is a locally associated subring of $T$ but $S$ is not.
\end{example}

One will note that, as mentioned in the introduction, this readily-available example which demonstrates a failure of expected ``nice" properties comes from the fact that the coefficient rings in $\bZ+x\bZ+x^2\bQ[x]$ are allowed to change with the power of $x$. We will explore these rings more thoroughly in the following section.

Before concluding our discussion of these subring relations, we take a moment to refine an equivalent characterization of locally associated subrings. Using an exact sequence from \cite[Proposition 12.9]{neukirch}, it was shown in \cite[Theorem 3.4]{subringrelations} that, among other similar characterizations, an order $R$ in a number field is locally associated if and only if $\Cl(R)\cong \Cl(\Rbar)$. We generalize this characterization by first generalizing the exact sequence, which is of independent interest.

\begin{lemma}
    \label{comaximal intersection}
    Let $T$ be an integral domain, $R$ a subring with identity, and $I$ an ideal of $T$ which is contained in $R$. If $J$ is an ideal of $T$ which is comaximal to $I$ (i.e., $J+I=T$), then $R\cap J$ is an ideal of $R$ which is comaximal to $I$ (i.e., $R\cap J+I=R$).
\end{lemma}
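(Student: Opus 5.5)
The plan is to verify the two assertions directly: first that $R\cap J$ is an ideal of $R$, then that it is comaximal to $I$ inside $R$.

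For the ideal property, take $a,b\in R\cap J$ and $r\in R$. Then $a-b$ lies in $R$ because $R$ is a subring, and in $J$ because $J$ is an ideal of $T$. Likewise $ra$ lies in $R$, and since $r\in R\subseteq T$, it also lies in $J$. So $R\cap J$ is an ideal of $R$. Because $I\subseteq R$ and $I$ is an ideal of $T$, it is also an ideal of $R$ ($RI\subseteq TI\subseteq I$). Hence $R\cap J+I$ is an ideal of $R$, and it suffices to show it contains $1_R$.

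This is where some care is needed. The natural move is to write $1=j+i$ with $j\in J$ and $i\in I$, using $J+I=T$. But $R$ is only assumed to have an identity, so a priori $1_R$ might differ from $1_T$. I would handle this as follows. Since $1_R\cdot 1_R=1_R$ in the domain $T$, we get $1_R(1_R-1_T)=0$. As $1_R\neq 0$ (we assume $R\neq 0$; otherwise there is nothing to prove), this forces $1_R=1_T$. If one prefers to avoid this identification, the argument below also works with $1_R$ in place of $1$ by multiplying a decomposition of $1_T$ by $1_R$.

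Now write $1=j+i$ with $j\in J$ and $i\in I$. Since $i\in I\subseteq R$ and $1\in R$, we have $j=1-i\in R$. Therefore $j\in R\cap J$, and $1=j+i\in R\cap J+I$. It follows that $R\cap J+I=R$.

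I do not expect a genuine obstacle. The only point needing care is the identification of identities, which is where the integral domain hypothesis enters. The key use of $I\subseteq R$ is that it forces the $J$-component $j=1-i$ to lie in $R$.
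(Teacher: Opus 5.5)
Your proof is correct and is essentially the paper's argument: write $1=j+i$ with $j\in J$ and $i\in I$, then use $i\in I\subseteq R$ to conclude that $j=1-i\in R\cap J$. Your extra step, that the domain hypothesis forces $1_R=1_T$, makes explicit a point the paper uses without comment.
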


\begin{proof}
    Let $J$ be an ideal of $T$ comaximal to $I$. It is immediate that $R\cap J$ is an ideal of $R$. Since $J$ is comaximal to $I$, there exist $\alpha\in J$ and $\beta\in I$ such that $\alpha+\beta=1$. Since $\beta\in I\subseteq R$, then $\alpha=1-\beta\in R\cap J$. Then $1\in R\cap J+I$, so $R\cap J$ is comaximal to $I$.
\end{proof}

\begin{lemma}
    \label{comaximal to AI}
    Let $R$ be an integral domain and $I$ and $J$ ideals of $R$. Then for any ideal $A$ of $R$ which is comaximal to $J$, $J+I=J+AI$. In particular, $J$ is comaximal to $I$ if and only if $J$ is comaximal to $AI$.
\end{lemma}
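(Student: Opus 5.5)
Since $AI\subseteq I$, we have $J+AI\subseteq J+I$, so only the reverse inclusion $J+I\subseteq J+AI$ needs proof. The idea is to use the comaximality of $A$ and $J$ to write $1$ as a sum, and then multiply that relation by elements of $I$.

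Since $A$ is comaximal to $J$, there exist $a\in A$ and $j\in J$ with $a+j=1$. Take any element $x+i$ of $J+I$, with $x\in J$ and $i\in I$. Then
\[
i=i\cdot 1=ia+ij .
\]
Here $ia\in AI$, and $ij\in J$ because $J$ is an ideal. Hence
\[
x+i=(x+ij)+ia\in J+AI ,
\]
which gives $J+I\subseteq J+AI$ and so $J+I=J+AI$.

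The ``in particular'' statement follows at once. $J$ is comaximal to $I$ exactly when $J+I=R$, and by the equality just proved this holds exactly when $J+AI=R$, that is, when $J$ is comaximal to $AI$.

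There is no real obstacle here; the argument is elementary. Note that it does not use the domain hypothesis and works in any commutative ring with identity. The only care needed is the easy inclusion $AI\subseteq I$, which holds because $I$ is an ideal.
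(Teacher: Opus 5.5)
Your proof is correct and follows essentially the same route as the paper: write $1=a+j$ with $a\in A$, $j\in J$, multiply through, and sort the terms into $J$ and $AI$ (the paper expands the whole product $(\beta_1+\gamma_1)(\beta_2+\gamma_2)$, while you multiply only the $I$-component, which is a trivial streamlining). Your observations that the ``in particular'' clause follows immediately and that the domain hypothesis is unnecessary are also correct.
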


\begin{proof}
    First, note that since $AI\subseteq I$, then $J+AI\subseteq J+I$ trivially. We will show the reverse inclusion. Let $\alpha\in J+I$; that is, there exist $\beta_1\in J$ and $\gamma_1\in I$ such that $\alpha=\beta_1+\gamma_1$. Furthermore, since $A$ is comaximal to $J$, there exist $\beta_2\in J$ and $\gamma_2\in A$ such that $\beta_2+\gamma_2=1$. Then
    $$\alpha=(\beta_1+\gamma_1)(\beta_2+\gamma_2)=(\beta_1\beta_2+\beta_1\gamma_2+\beta_2\gamma_1)+\gamma_1\gamma_2\in J+AI.$$
    Thus, $J+I=J+AI$.
\end{proof}

\begin{lemma}
    \label{going up or going down}
    Let $T$ be an integral domain, $R$ a subring with identity, and $I=(R:T)$. If $J$ is an ideal of $R$ which is comaximal to $I$, then $JT\cap R=J$. If $A$ is an ideal of $T$ that is comaximal to $I$, then $(R\cap A)T=A$.
\end{lemma}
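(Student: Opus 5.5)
Both claims follow from one device: split $1$ as a sum of an element of the ideal and an element of $I=(R:T)$, then multiply through. Note first that $I$ is an ideal of $R$ with $IT\subseteq R$.

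\emph{First claim.} The inclusion $J\subseteq JT\cap R$ is clear because $1\in T$. For the reverse, take $x\in JT\cap R$. Since $J+I=R$, choose $j_0\in J$ and $i_0\in I$ with $j_0+i_0=1$, and write $x=x j_0+x i_0$.
\begin{itemize}
\item The term $x j_0$ lies in $J$, since $x\in R$ and $j_0\in J$.
\item For $x i_0$, write $x=\sum_k a_k t_k$ with $a_k\in J$ and $t_k\in T$. Then $x i_0=\sum_k a_k (t_k i_0)$. Each $t_k i_0$ lies in $R$ because $i_0\in(R:T)$. Hence $x i_0\in JR=J$.
\end{itemize}
So $x\in J$, giving $JT\cap R=J$.

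\emph{Second claim.} The inclusion $(R\cap A)T\subseteq A$ is clear, since $A$ is an ideal of $T$. For the reverse, we first need a splitting of $1$ that lives in $R$.
\begin{itemize}
\item Because $A+I=T$ with $I\subseteq R$, Lemma \ref{comaximal intersection} gives $(R\cap A)+I=R$.
\item Concretely, there are $\alpha\in R\cap A$ and $\beta\in I$ with $\alpha+\beta=1$.
\end{itemize}
Now take $a\in A$ and write $a=a\alpha+a\beta$.
\begin{itemize}
\item The term $a\alpha$ lies in $(R\cap A)T$, since $\alpha\in R\cap A$ and $a\in T$.
\item The term $a\beta$ lies in $R$, because $\beta\in(R:T)$. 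It also lies in $A$, because $a\in A$ and $A$ is an ideal of $T$. So $a\beta\in R\cap A\subseteq (R\cap A)T$.
\end{itemize}
Hence $a\in(R\cap A)T$, and so $(R\cap A)T=A$.

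\emph{Main obstacle.} The delicate step is the $x i_0$ term in the first claim. An element of $JT$ is a sum of products, not a single product, so we cannot argue that $x i_0\in I\subseteq R$ and stop. We must push $i_0$ onto each $t_k$ separately and use the conductor property $i_0T\subseteq R$ to land in $JR=J$. Everything else is direct bookkeeping, together with Lemma \ref{comaximal intersection} in the second part.
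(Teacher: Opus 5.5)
Your proof is correct and follows the paper's argument essentially verbatim: split $1$ as an element of the ideal plus an element of $I$, multiply through, and in the first claim move the conductor element onto each $t_k$ so that the term lands in $JR=J$. The only cosmetic difference is that you cite Lemma~\ref{comaximal intersection} to get $\alpha\in R\cap A$, whereas the paper simply notes that $\alpha=1-\beta\in R$ because $\beta\in I\subseteq R$.
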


\begin{proof}
    Let $J$ be an ideal of $R$ comaximal which is comaximal to $I$. Then trivially, $J\subseteq JT\cap R$; we need to show the reverse inclusion. Since $J$ is comaximal to $I$, then there exists some $\alpha\in J$ and $\beta\in I$ such that $\alpha+\beta=1$. Now let $r=j_1t_1+\dots+j_kt_k\in JT\cap R$, with each $j_i\in J$ and $t_i\in T$. Then
    $$r=r(\alpha+\beta)=\alpha r+j_1(t_1\beta)+\dots+j_k(t_k\beta).$$
    Since $\alpha\in J$ and $r\in R$, then $\alpha r\in J$. Since $j_i\in J$ and $t_i\beta\in R$, then $j_i(t_i\beta)\in J$ for every $1\leq i\leq k$. Then $r\in J$, so $JT\cap R=J$.

    Now let $A$ be an ideal of $T$ comaximal to $I$. Trivially, $(R\cap A)T\subseteq A$; we need to show the reverse inclusion. Since $A$ is comaximal to $I$, there exists some $\alpha\in A$ and $\beta\in I$ such that $\alpha+\beta=1$; note that $\alpha=1-\beta\in R$. Now for any $a\in A$, note that $a=a(\alpha+\beta)=a\alpha+a\beta$. Since $\alpha\in R\cap A$ and $a\in T$, then $a\alpha\in (R\cap A)T$. Since $a\in A$ and $\beta\in I$, then $a\beta\in R\cap A\subseteq (R\cap A)T$. Then $a\in (R\cap A)T$, so $(R\cap A)T=A$.
\end{proof}

\begin{lemma}
    \label{still invertible}
    Let $T$ be an integral domain with quotient field $K$, $R$ a subring with identity, and $I=(R:T)$. If $J$ is an invertible integral ideal of $R$, then $JT$ is an invertible integral ideal of $T$. If $A$ is an invertible integral ideal of $T$ which is comaximal to $I$, then $R\cap A$ is an invertible integral ideal of $R$ which is comaximal to $I$.
\end{lemma}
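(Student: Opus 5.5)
The first claim is immediate. Since $R$ and $T$ share the quotient field $K$, if $J$ is invertible there is a fractional ideal $J^{-1}$ of $R$ with $JJ^{-1}=R$. Then $J^{-1}T$ is a $T$-submodule of $K$ and
$$(JT)(J^{-1}T)=(JJ^{-1})T=RT=T.$$
It is fractional: if $dJ^{-1}\subseteq R$ for some nonzero $d$, then $dJ^{-1}T\subseteq T$. Hence $JT$ is an invertible integral ideal of $T$. Here I am assuming $R\subseteq T$ share a quotient field; if $I\neq 0$ this holds automatically, since $T\subseteq \beta^{-1}R$ for any nonzero $\beta\in I$. If not, I would still use $J^{-1}T$ computed in the quotient field of $T$, and the same product computation goes through.

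For the second claim, let $A$ be invertible in $T$ and comaximal to $I$, and set $B=R\cap A$. By Lemma \ref{comaximal intersection} (noting that $I$ is an ideal of $T$ contained in $R$), $B$ is an ideal of $R$ comaximal to $I$. By Lemma \ref{going up or going down}, $BT=A$. It remains to show $B$ is invertible in $R$, and this is the main step.

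My plan is to exploit the invertibility of $A$ locally, combined with the decomposition $1=\alpha+\beta$ with $\alpha\in B$ and $\beta\in I$. Write
$$1=\sum_i a_i c_i,\qquad a_i\in A,\ c_i\in A^{-1}.$$
Multiplying by $\alpha+\beta$ repeatedly, I want to produce elements $b_i\in B$ and $d_i\in (R:B)$ with $\sum b_i d_i=1$. Try $b_i=a_i\beta$ (which lies in $A\cap I\subseteq B$) and $b_0=\alpha$. For the duals, note that $c_i\beta$ sends $B$ into $R$: indeed $c_i\beta B\subseteq \beta c_iA\subseteq \beta T\subseteq I\subseteq R$.

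Then compute
$$1=\alpha+\beta=\alpha+\beta\sum_i a_ic_i=\alpha\cdot 1+\sum_i (a_i\beta)c_i.$$
The term with $c_i$ alone is problematic, since $c_i$ need not lie in $(R:B)$. So I would multiply the identity $1=\alpha+\beta$ once more:
$$1=(\alpha+\beta)^2=\alpha(\alpha+2\beta)+\beta^2\sum_i a_ic_i=\alpha(\alpha+2\beta)+\sum_i(a_i\beta)(c_i\beta).$$
Here $\alpha+2\beta\in R\subseteq(R:B)$, $a_i\beta\in B$, and $c_i\beta\in(R:B)$. Hence $1\in B(R:B)$, so $B$ is invertible, and it is comaximal to $I$ as already shown.

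The only real obstacle is the bookkeeping needed to confirm $c_i\beta\in(R:B)$. The fractional-ideal setting (a common quotient field) should also be noted, as in the first part.
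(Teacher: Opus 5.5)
Your proof is correct and follows the paper's argument. Squaring $1=\alpha+\beta$ to get $1=\alpha(\alpha+2\beta)+\beta\cdot\beta$ is an explicit instance of the paper's use of Lemma \ref{comaximal to AI} to produce $\alpha_2+\beta_1\beta_2=1$; from there both proofs write $1=\alpha_2+\sum_i(a_i\beta_1)(c_i\beta_2)$ with $a_i\beta_1\in R\cap A$ and $c_i\beta_2\in(R\cap A)^{-1}$, and your first part matches the paper's as well. The only point to tidy is notation: your $(R:B)$ must mean $B^{-1}=\{k\in K\mid kB\subseteq R\}$, not the paper's conductor (which contains only elements of $R$), because $c_i\beta$ need not lie in $R$.
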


\begin{proof}
    Let $J$ be an invertible integral ideal of $R$, i.e., $JJ^{-1}=R$, where $J^{-1}=\{\alpha\in K|\alpha J\subseteq R\}$. For any $\alpha\in J^{-1}$, note that $\alpha J\subseteq R$, so $\alpha(JT)\subseteq RT=T$. Then $\alpha\in (JT)^{-1}$. Since $J\subseteq JT$, then $1\in JJ^{-1}\subseteq (JT)(JT)^{-1}$, so $(JT)(JT)^{-1}=T$. Then $JT$ is an invertible integral ideal of $T$.

    Now let $A$ be an invertible integral ideal of $T$ which is comaximal to $I$, i.e., $AA^{-1}=T$ and there exist $\alpha_1\in A$ and $\beta_1\in I$ such that $\alpha_1+\beta_1=1$. Lemma \ref{comaximal intersection} tells us immediately that $R\cap A$ is an ideal of $R$ which is comaximal to $I$; we need to show that $R\cap A$ is invertible. Since $\beta_1$ is comaximal to $A$, then Lemma \ref{comaximal to AI} tells us that $A$ is also comaximal to $\beta_1I$. Then there exist $\alpha_2\in A$ and $\beta_2\in I$ such that $\alpha_2+\beta_1\beta_2=1$. Since $AA^{-1}=T$, then $1=a_1b_1+\dots+a_kb_k$ for some $a_i\in A$ and $b_i\in A^{-1}$, $1\leq i\leq k$. Then note that 
    $$1=(a_1b_1+\dots+a_kb_k)(\alpha_2+\beta_1\beta_2)=\alpha_2+(a_1\beta_1)(b_1\beta_2)+\dots+(a_k\beta_1)(b_k\beta_2).$$
    Since $\alpha_2=1-\beta_1\beta_2\in R\cap A$ and $1\in R\subseteq (R\cap A)^{-1}$, then $\alpha_2\in (R\cap A)(R\cap A)^{-1}$. For each $1\leq i\leq k$, note that $a_i\beta_1\in R\cap A$. Furthermore, since $b_1\in A^{-1}$, then for any $a\in A$, $a(b_i\beta_2)=(ab_i)\beta_2\in R$. Then each $b_i\beta_2\in (R\cap A)^{-1}$, so $(a_i\beta_1)(b_i\beta_2)\in (R\cap A)(R\cap A)^{-1}$. Thus, $1\in (R\cap A)(R\cap A)^{-1}$. Therefore, $(R\cap A)(R\cap A)^{-1}=R$, meaning that $R\cap A$ is an invertible integral ideal of $R$ which is comaximal to $I$.
\end{proof}

\begin{theorem}
    \label{exact sequence}
    Let $T$ be an integral domain, $R$ a subring with identity, and $I=(R:T)$. Consider the following sequence:
    $$1\xrightarrow{\iota} U(R)\xrightarrow{\phi} U(T)\oplus U(\sfrac{R}{I})\xrightarrow{\psi} U(\sfrac{T}{I})\xrightarrow{\sigma} \Cl(R)\xrightarrow{\tau}\Cl(T)\xrightarrow{\pi}1$$
    with maps $\phi$, $\psi$, and $\sigma$ defined as follows:
    \begin{alignat*}{2}
        &\phi:U(R)\to U(T)\oplus U(\sfrac{R}{I}),&&\quad\quad\phi(v)=(v,v^{-1}+I);\\
        &\psi:U(T)\oplus U(\sfrac{R}{I})\to U(\sfrac{T}{I}),&&\quad\quad\psi(u,r+I)=ur+I;\\
        &\sigma:U(\sfrac{T}{I})\to \Cl(R),&&\quad\quad\sigma(t+I)=[tT\cap R];\\
        &\tau:\Cl(R)\to\Cl(T),&&\quad\quad\tau([J])=[JT].
    \end{alignat*}
    Then $\phi$, $\psi$, $\sigma$, and $\tau$ are homomorphisms. Moreover, the sequence $(\iota,\phi,\psi,\sigma)$ is exact. The sequence $(\iota,\phi,\psi,\sigma,\tau)$ is exact if and only if every ideal class in $\ker(\tau)$ (i.e., any class $[J]\in\Cl(R)$ such that $JT$ is principal) contains an ideal class representative $J$ which is an integral ideal of $R$ comaximal to $I$. If, in addition to the previous condition, every ideal class in $\Cl(T)$ contains an ideal class representative $J$ which is an integral ideal of $T$ comaximal to $I$, then the sequence $(\iota,\phi,\psi,\sigma,\tau,\pi)$ is exact.
\end{theorem}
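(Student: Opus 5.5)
The plan is to verify each claim directly, treating $\tau$ and $\sigma$ first, since they carry the content; the $\phi$ and $\psi$ parts are routine. One fact is used throughout: $I=(R:T)$ is an ideal of $T$ contained in $R$. Indeed, if $r\in I$ and $t\in T$, then $rt\in R$ and $(rt)T\subseteq rT\subseteq R$. Consequently $R/I\subseteq T/I$, and $IT\subseteq R$. Also, $t+I\in U(T/I)$ holds exactly when $tT+I=T$.

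\textbf{Well-definedness of $\tau$ and $\sigma$.} For $\tau$, the first part of Lemma \ref{still invertible} shows $JT$ is invertible. Principal ideals map to principal ideals, and $(JJ')T=(JT)(J'T)$, so $\tau$ is a well-defined homomorphism. For $\sigma$, take $t+I$ a unit. Then $tT$ is an invertible ideal of $T$ comaximal to $I$, so Lemma \ref{still invertible} makes $tT\cap R$ an invertible integral ideal of $R$ comaximal to $I$. To see independence of the representative, let $t'=t+\beta$ with $\beta\in I$. I will show $t'(tT\cap R)=t(t'T\cap R)$. If $ts\in R$ with $s\in T$, then $t's=ts+\beta s\in R$ because $\beta s\in IT\subseteq R$. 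Hence $t'ts=t(t's)\in t(t'T\cap R)$, and the other inclusion follows by symmetry. Therefore $[tT\cap R]=[t'T\cap R]$.

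\textbf{$\sigma$ is a homomorphism.} Put $J_1=tT\cap R$ and $J_2=t'T\cap R$; both are comaximal to $I$, and so is $J_1J_2$. By Lemma \ref{going up or going down}, $J_iT=tT$ and $t'T$ respectively, so $(J_1J_2)T=tt'T$. Applying the first half of Lemma \ref{going up or going down} to $J_1J_2$ then gives $J_1J_2=(J_1J_2)T\cap R=tt'T\cap R$. I expect this step and the representative-independence to be the main delicate points; everything else is bookkeeping.

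\textbf{Exactness of $(\iota,\phi,\psi,\sigma)$.} The map $\phi$ is clearly an injective homomorphism, and $\psi\phi=1$. Suppose $\psi(u,r+I)=1$, so $ur\equiv1 \pmod I$. Choose $r'\in R$ with $rr'\equiv1\pmod I$. Then $u\equiv u r r'\equiv r'\pmod I$, so $u\in R+I=R$. Likewise $u^{-1}\equiv r\pmod I$ gives $u^{-1}\in R$. Thus $u\in U(R)$ and $(u,r+I)=\phi(u)$. Next, $\sigma\psi(u,r+I)=[rT\cap R]$, and $rT\cap R=rR$ by Lemma \ref{going up or going down} applied to $J=rR$, which is comaximal to $I$; so this class is trivial. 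Conversely, suppose $tT\cap R=\alpha R$. Then Lemma \ref{going up or going down} gives $\alpha T=tT$, so $\alpha=tu$ with $u\in U(T)$. Since $\alpha R$ is comaximal to $I$, $\alpha+I\in U(R/I)$. Hence $t+I=\psi(u^{-1},\alpha+I)$.

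\textbf{Exactness at $\Cl(R)$ and $\Cl(T)$.} We have $\tau\sigma(t+I)=[(tT\cap R)T]=[tT]=1$. Every class in the image of $\sigma$ has the integral representative $tT\cap R$, which is comaximal to $I$, so the stated condition is necessary for $\ker\tau\subseteq\operatorname{im}\sigma$. Conversely, suppose $[J]\in\ker\tau$ with $J$ integral and comaximal to $I$, and $JT=tT$. Then $1\in J+I\subseteq tT+I$, so $t+I$ is a unit. By Lemma \ref{going up or going down}, $tT\cap R=JT\cap R=J$, so $[J]=\sigma(t+I)$. Finally, for surjectivity of $\tau$ under the extra hypothesis: given a class with an integral representative $A$ of $T$ comaximal to $I$, Lemma \ref{still invertible} shows $R\cap A$ is invertible in $R$. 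Lemma \ref{going up or going down} then gives $\tau([R\cap A])=[(R\cap A)T]=[A]$.
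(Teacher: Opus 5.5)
Your proposal is correct and follows essentially the same proof as the paper: the same representative-independence argument for $\sigma$, the same modulo-$I$ computation for $\ker\psi$, and the same uses of Lemmas \ref{still invertible} and \ref{going up or going down} for exactness at $U(T/I)$, $\Cl(R)$ and $\Cl(T)$. The only real difference is that you prove $\sigma$ is multiplicative by applying both halves of Lemma \ref{going up or going down} to $J_1J_2$, which is a cleaner substitute for the paper's explicit expansion of $t_1t_2\alpha(t_1s_1+\beta_1)(t_2s_2+\beta_2)$.
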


\begin{proof}
    To start, we will show that the maps $\phi$, $\psi$, $\sigma$, and $\tau$ are (well-defined) homomorphisms. Then, we will show exactness.

    First, note that for any $v\in U(R)$, $v^{-1}\in R\subseteq T$. Then $v\in U(T)$ as well. Furthermore, $(v+I)(v^{-1}+I)=1+I$, so $v^{-1}+I\in U(\sfrac{R}{I})$. Then $\phi$ is a well-defined function. Moreover, note that for any $v_1,v_2\in U(R)$,
    $$\phi(v_1v_2)=(v_1v_2,(v_1v_2)^{-1}+I)=(v_1,v_1^{-1}+I)(v_2,v_2^{-1}+I)=\phi(v_1)\cdot \phi(v_2).$$
    Thus, $\phi$ is a homomorphism.

    Now let $u\in U(T)$ and $r+I\in U(\sfrac{R}{I})$. Then for some $s\in R$, $(r+I)(s+I)=1+I$. For this choice of $s$, note that $(ur+I)(u^{-1}s+I)=1+I$ in $\sfrac{T}{I}$, so $ur+I\in U(\sfrac{T}{I})$. Moreover, if $u\in U(T)$ and $r_1+I=r_2+I\in U(\sfrac{R}{I})$, then $r_1-r_2\in I$. Therefore, $ur_1-ur_2=u(r_1-r_2)\in I$, so $ur_1+I=ur_2+I$. Thus, $\psi$ is a well-defined function. To see that $\psi$ is a homomorphism, note that for $u_1,u_2\in U(T)$ and $r_1+I,r_2+I\in U(\sfrac{R}{I})$,
    $$\psi(u_1u_2,r_1r_2+I)=u_1u_2r_1r_2+I=(u_1r_1+I)(u_2r_2+I)=\psi(u_1,r_1+I)\cdot\psi(u_2,r_2+I).$$

    Now let $t+I\in U(\sfrac{T}{I})$; then $J:=tT\cap R$ is an ideal of $R$. Since $tT$ is a principal (and thus invertible) ideal of $T$ comaximal to $I$, then Lemma \ref{still invertible} tells us that $J$ is an invertible ideal of $R$ comaximal to $I$. Then $[J]=[tT\cap R]\in \Cl(R)$. Letting $t_1+I=t_2+I\in U(\sfrac{T}{I})$, we note that $t_1-t_2\in I$. Then for any $s\in T$ such that $t_1s\in R$, $t_2s=t_1s+(t_2-t_1)s\in R$. By symmetry of $t_1$ and $t_2$, this tells us that for any $s\in T$, $t_1s\in R$ if and only if $t_2s\in R$. Then $t_1T\cap R=\frac{t_1}{t_2}(t_2T\cap R)$, so $[t_1T\cap R]=[t_2T\cap R]$ in $\Cl(R)$. Thus, $\sigma$ is a well-defined function.

    To show that $\sigma$ is a homomorphism, let $t_1+I,t_2+I\in U(\sfrac{T}{I})$. Trivially, $(t_1T\cap R)(t_2T\cap R)\subseteq t_1t_2T\cap R$. To show the reverse inclusion, let $\alpha\in T$ such that $t_1t_2\alpha\in R$. Since $t_1$ and $t_2$ are units modulo $I$, there exist $s_1,s_2\in T$ and $\beta_1,\beta_2\in I$ such that $t_1s_1+\beta_1=t_2s_2+\beta_2=1$ (in particular, $t_1s_1,t_2s_2\in R$). Then:
    \begin{align*}
        t_1t_2\alpha&=t_1t_2\alpha(t_1s_1+\beta_1)(t_2s_2+\beta_2)\\&=[t_1s_1][t_2s_2\cdot t_1t_2\alpha]+[t_1s_1][t_2(t_1\alpha\beta_2)]+[t_1(t_2\alpha\beta_1)][t_2s_2]+[t_1(\alpha\beta_1)][t_2\beta_2].
    \end{align*}
    Since each addend on the right-hand side of this identity is an element of $(t_1T\cap R)(t_2T\cap R)$, then $t_1t_2\alpha\in (t_1T\cap R)(t_2T\cap R)$. Thus, $$\sigma(t_1t_2+I)=[t_1t_2T\cap R]=[t_1T\cap R][t_2T\cap R]=\sigma(t_1+I)\cdot \sigma(t_2+I),$$
    so $\sigma$ is a homomorphism.

    Now let $[J]\in \Cl(R)$; without loss of generality, we may assume that $J$ is an integral ideal of $R$. By Lemma \ref{still invertible}, $JT$ is an invertible ideal of $T$, so $[JT]\in \Cl(T)$. Furthermore, if $[J_1]=[J_2]$, i.e., if there exist nonzero $r_1,r_2\in R$ such that $J_1=\frac{r_1}{r_2}J_2$, then $J_1T=\frac{r_1}{r_2}J_2T$. Thus, $\tau([J_1])=[J_1T]=[J_2T]=\tau([J_2])$, so $\tau$ is a well-defined function. To see that $\tau$ is in fact a homomorphism, note that for any $[J_1],[J_2]\in\Cl(R)$, $$\tau([J_1J_2])=[J_1J_2T]=[J_1T][J_2T]=\tau([J_1])\cdot\tau([J_2]).$$

    We now need to show exactness of the sequence $(\iota,\phi,\psi,\sigma)$. To do so, we need to show the following:
    \begin{align*}
        \{1\}&=\ker(\phi);\\
        \phi(U(R))&=\ker(\psi);\\
        \psi(U(T)\oplus U(\sfrac{R}{I}))&=\ker(\sigma).
    \end{align*}

    First, we will show that $\ker(\phi)$ is trivial; that is, that $\phi$ is injective. Let $u\in U(R)$ such that $\phi(u)=(u,u^{-1}+I)=(1,1+I)$. Then trivially $u=1$, so $\ker(\phi)=\{1\}$.

    We now want to show that $\phi(U(R))=\ker(\psi)$. First, let $v\in U(R)$ and note that $\psi(\phi(v))=\psi(v,v^{-1}+I)=vv^{-1}+I=1+I$. Then $\phi(v)\in \ker(\psi)$, so $\phi(U(R))\subseteq\ker(\psi)$. For the reverse inclusion, let $(t,r+I)\in \ker(\psi)$, i.e., $t\in U(T)$ and $r+I\in U(\sfrac{R}{I})$ such that $\psi(t,r+I)=tr+I=1+I$. Since $r+I\in U(\sfrac{R}{I})$, there exist $s\in R$ and $\beta_1\in I$ such that $sr+\beta_1=1$; since $tr+I=1+I$, there exists $\beta_2\in I$ such that $tr+\beta_2=1$. Then $t-s=(t-s)(sr+\beta_1)=(tr-sr)s+(t-s)\beta_1=(\beta_1-\beta_2)s+(t-s)\beta_1\in I$. Since $s\in R$ and $t-s\in I\subseteq R$, this means that $t\in R$. Moreover, $tr+\beta_2=1$ and $t^{-1}\in T$, so $t^{-1}=r+t^{-1}\beta_2\in R$; then $t\in U(R)$. Furthermore, $t^{-1}+I=(t+I)^{-1}=r+I$, so $t\in U(R)$ such that $(t,r+I)=(t,t^{-1}+I)=\phi(t)\in \phi(U(R))$. Thus, $\ker(\psi)\subseteq \phi(U(R))$.

    We will now show that $\psi(U(T)\oplus U(\sfrac{R}{I}))=\ker(\sigma)$. First, let $u\in U(T)$ and $r+I\in U(\sfrac{R}{I})$; then $\sigma(\psi(u,r+I))=\sigma(ur+I)=[urT\cap R]=[rT\cap R]$. Since $rT=(rR)T$, with $rR$ comaximal to $I$, then Lemma \ref{going up or going down} tells us that $(rR)T\cap R=rR$. Then $\sigma(\psi(u,r+I))$ is the principal (identity) class in $\Cl(R)$, so $\psi(U(T)\oplus U(\sfrac{R}{I}))\subseteq \ker(\sigma)$. To show the reverse inclusion, suppose that $t+I\in \ker(\sigma)$, i.e., $t+I\in U(\sfrac{T}{I})$ and $tT\cap R$ is a principal ideal in $R$. Then there exists $r\in R$ such that $tT\cap R=rR$. Since $t+I\in U(\sfrac{T}{I})$, there exists some $s\in T$ and $\beta\in I$ such that $st+\beta=1$. Since $st=1-\beta\in tT\cap R$, then $st=r\gamma$ for some $\gamma\in R$, so $r\gamma+\beta=1$. Then $r+I\in U(\sfrac{R}{I})$. Since $t$ is comaximal to $I$, Lemma \ref{going up or going down} tells us that $tT=(tT\cap R)T=(rR)T=rT$. Then $t$ and $r$ are associates in $T$, i.e., there exists $u\in U(T)$ such that $t=ur$, so $t+I=ur+I=\psi(u,r+I)\in \psi(U(T)\oplus U(\sfrac{R}{I}))$. Therefore, $\ker(\sigma)\subseteq \psi(U(T)\oplus U(\sfrac{R}{I}))$, so we conclude that $(\iota,\phi,\psi,\sigma)$ is an exact sequence.

    Now assume that every ideal class in $\ker(\tau)$ contains an ideal class representative $J$ which is an integral ideal of $R$ comaximal to $I$. Under this condition, we need to show that $\sigma(U(\sfrac{T}{I}))=\ker(\tau)$. Note that for any $t+I\in U(\sfrac{T}{I})$, the ideal $tT$ is comaximal to $I$. Then Lemma \ref{going up or going down} tells us that $\tau(\sigma(t+I))=\tau([tT\cap R])=[(tT\cap R)T]=[tT]$, the principal (identity) class in $\Cl(T)$. Then $\sigma(U(\sfrac{T}{I}))\subseteq\ker(\tau)$. To show the reverse inclusion, suppose that $[J]\in \ker(\tau)$, i.e., $JT=tT$ is principal. By our assumption, we can choose $J$ to be an integral ideal of $R$ comaximal to $I$. Then $R=J+I$, so $T=RT=(J+I)T=JT+I$. Then $JT=tT$ is comaximal to $I$, so $t+I\in U(\sfrac{T}{I})$. Then by Lemma \ref{going up or going down}, $tT\cap R=JT\cap R=J$, so $\sigma(t+I)=[tT\cap R]=[J]$. Thus, $[J]\in \sigma(U(\sfrac{T}{I}))$, so $\sigma(U(\sfrac{T}{I}))=\ker(\tau)$. Therefore, if every ideal class in $\ker(\tau)$ contains an ideal class representative $J$ which is an integral ideal of $R$ comaximal to $I$, then the sequence $(\iota,\phi,\psi,\sigma,\tau)$ is exact.

    On the other hand, assume that the sequence $(\iota,\phi,\psi,\sigma,\tau)$ is exact. In particular, this means that $\sigma(U(\sfrac{T}{I}))=\ker(\tau)$. Then every ideal class in $\ker(\tau)$ can be represented as $\sigma(t+I)=[tT\cap R]$ for some $t+I\in U(\sfrac{T}{I})$. Clearly $tT\cap R$ is an integral ideal of $R$; since $t+I\in U(\sfrac{T}{I})$, then $tT$ is comaximal to $I$ in $T$. Then by Lemma \ref{still invertible}, $tT\cap R$ is comaximal to $I$ in $R$. Thus, every ideal class in $\ker(\tau)$ contains an ideal class representative $J$ which is an integral ideal of $R$ comaximal to $I$.

    Finally, assume that, in addition to the previous condition, every ideal class in $\Cl(T)$ contains an ideal class representative $J$ which is an integral ideal of $T$ comaximal to $I$. In order to show that the sequence $(\iota,\phi,\psi,\sigma,\tau,\pi)$ is exact, we only need to show that $\tau$ is surjective. To do so, let $[A]\in\Cl(T)$. By the assumption, we can without loss of generality assume that $A$ is an integral ideal of $T$ comaximal to $I$. Then by Lemma \ref{still invertible}, $R\cap A$ is an invertible integral ideal of $R$ comaximal to $I$; that is, $[R\cap A]\in \Cl(R)$. Moreover, by Lemma \ref{going up or going down}, $\tau([R\cap A])=[(R\cap A)T]=[A]$. Then $\tau$ is surjective, so the sequence $(\iota,\phi,\psi,\sigma,\tau,\pi)$ is exact.
\end{proof}

This exact sequence gives us exactly what we need to conclude that the alternate characterization of locally associated orders given in \cite[Theorem 3.4]{subringrelations} holds more generally.

\begin{theorem}
    Let $T$ be an integral domain, $R$ a subring with identity, and $I=(R:T)$. If the sequence $(\iota,\phi,\psi,\sigma,\tau,\pi)$ given in Theorem \ref{exact sequence} is exact (in particular, if every ideal class in $\ker(\tau)$ contains an ideal class representative which is an integral ideal of $R$ comaximal to $I$ and every ideal class in $\Cl(T)$ contains an ideal class representative which is an integral ideal of $T$ comaximal to $I$), then the following are equivalent:
    \begin{enumerate}
        \item $R$ is a locally associated subring of $T$;
        \item $\Cl(R)\cong \Cl(T)$ via the isomorphism $\tau$.
    \end{enumerate}
\end{theorem}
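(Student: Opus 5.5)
We plan to read both conditions off the exact sequence
$$1\xrightarrow{\iota} U(R)\xrightarrow{\phi} U(T)\oplus U(\sfrac{R}{I})\xrightarrow{\psi} U(\sfrac{T}{I})\xrightarrow{\sigma} \Cl(R)\xrightarrow{\tau}\Cl(T)\xrightarrow{\pi}1.$$
Since the sequence is exact at $\Cl(T)$, $\tau$ is surjective. Hence condition (2) is equivalent to $\ker(\tau)$ being trivial. By exactness at $\Cl(R)$ we have $\ker(\tau)=\sigma(U(\sfrac{T}{I}))$. So (2) holds if and only if $\sigma$ is the trivial map. By exactness at $U(\sfrac{T}{I})$, this happens if and only if $\ker(\sigma)=U(\sfrac{T}{I})$, i.e. if and only if $\psi$ is surjective.

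It remains to show that $\psi$ being surjective is equivalent to $R$ being locally associated. We use characterization (2) of Definition \ref{locally associated}: every coset of $U(\sfrac{R}{I})$ in $U(\sfrac{T}{I})$ contains the image of a unit of $T$.

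Suppose $\psi$ is surjective and let $t+I\in U(\sfrac{T}{I})$. Choose $u\in U(T)$ and $r+I\in U(\sfrac{R}{I})$ with $ur+I=t+I$. Let $s+I$ be the inverse of $r+I$ in $\sfrac{R}{I}$. Then $ts+I=urs+I=u+I$, so $ts+\beta=u\in U(T)$ for some $\beta\in I$. This is exactly condition (2).

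Conversely, suppose that for each $t+I$ there exist $r+I\in U(\sfrac{R}{I})$ and $\beta\in I$ with $u:=tr+\beta\in U(T)$. Let $s+I=(r+I)^{-1}$. Then $t+I=us+I=\psi(u,s+I)$, so $\psi$ is surjective. Here we use that $sr-1\in I$ and that $I$ is an ideal of $T$, so $t(1-sr)\in I$ and $\beta s\in I$.

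One point needs care in both directions: $I$ must be an ideal of $T$ contained in $R$, so that the quotient $\sfrac{T}{I}$ and the equalities of cosets make sense. The paper already uses this implicitly throughout Theorem \ref{exact sequence}, so we take it as given.

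We expect no genuinely hard step. The only care needed is tracking inverses in $\sfrac{R}{I}$ versus $\sfrac{T}{I}$ in the translation between surjectivity of $\psi$ and the coset condition. Alternatively, we could phrase the whole argument through characterization (3): the map $\phi$ of Definition \ref{locally associated}(3) is automatically well defined and injective by exactness at $U(T)\oplus U(\sfrac{R}{I})$, and it is surjective exactly when $\psi$ is.
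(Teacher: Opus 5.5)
Your proposal is correct and follows the paper's own route: exactness at $\Cl(T)$, $\Cl(R)$ and $U(T/I)$ reduces condition (2) to surjectivity of $\psi$, and surjectivity of $\psi$ is then identified with $R$ being locally associated. The only difference is cosmetic. The paper makes that last identification through characterization (1) of Definition~\ref{locally associated}, writing $t=ur+\beta=(r+u^{-1}\beta)u$ and saying only that the converse ``reverses'' the argument. You use characterization (2) and track the inverse of $r+I$ in $R/I$ explicitly in both directions, which is slightly more careful than the paper.
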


\begin{proof}
    Since the sequence from Theorem \ref{exact sequence} is exact, it follows that $\tau$ is an isomorphism if and only if $\sigma$ is trivial, i.e., $\ker(\sigma)=U(\sfrac{T}{I}).$ Thus, condition (2) is equivalent to $\psi$ being surjective. In this case, for every $t+I\in U(\sfrac{T}{I})$, there is some $u\in U(T)$ and $r+I\in U(\sfrac{R}{I})$ such that $t+I=ur+I$. Then for some $\beta\in I$, $t=ur+\beta=(r+u^{-1}\beta)u.$ Then by characterization (1) in Definition \ref{locally associated}, $R$ is a locally associated subring of $T$. The converse follows by simply reversing the previous argument.
\end{proof}

As an immediate consequence of \cite[Theorem 5.3]{conrad} and the fact that any order in a number field has finite class group and ideal norms, we get the following.

\begin{corollary}
    Let $R$ and $T$ be orders in a number field with $R\subseteq T\subseteq \Rbar$. Then the following hold:
    \begin{enumerate}
        \item The sequence $(\iota,\phi,\psi,\sigma,\tau,\pi)$ given in Theorem \ref{exact sequence} is exact;
        \item The class number of $R$ is given by
        $$\abs{\Cl(R)}=\abs{\Cl(T)}\frac{\abs{U(\sfrac{T}{I})}}{\abs{U(\sfrac{R}{I})}\cdot\abs{\quot{U(T)}{U(R)}}}.$$
    \end{enumerate}
    Moreover, the following are equivalent:
    \begin{enumerate}
        \item $R$ is a locally associated subring of $T$;
        \item $\abs{\quot{U(T)}{U(R)}}=\frac{\abs{U(\sfrac{T}{I})}}{\abs{U(\sfrac{R}{I})}};$
        \item $\abs{\Cl(R)}=\abs{\Cl(T)}$;
        \item $\Cl(R)\cong \Cl(T)$.
    \end{enumerate}
\end{corollary}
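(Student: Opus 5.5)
The proof reduces part (1) to the two representative conditions in Theorem \ref{exact sequence}, and then does finite counting. Throughout, $R\subseteq T$ are orders in a number field $K$ with common integral closure $\Rbar$, and $I=(R:T)$. Since $T$ is a finitely generated $R$-module of full rank, $I$ is a nonzero ideal of both $R$ and $T$. Hence $\sfrac{R}{I}$ and $\sfrac{T}{I}$ are finite rings.

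For part (1), the plan is to use \cite[Theorem 5.3]{conrad}. In the form I expect to use, it says that for an order $T$ and a nonzero ideal $I$, every class in $\Cl(T)$ contains an integral invertible ideal comaximal to $I$. This is usually proved by choosing, for an invertible integral ideal $A$, an element $x\in A^{-1}$ that avoids the finitely many relevant local conditions at the primes containing $I$. The finiteness of ideal norms is what makes the prime-avoidance step work. Applying this to $T$ gives the condition needed for surjectivity of $\tau$. Applying the same result to $R$ with the ideal $I$ shows that every class in $\Cl(R)$, in particular every class in $\ker(\tau)$, has an integral representative comaximal to $I$. Theorem \ref{exact sequence} then yields exactness of $(\iota,\phi,\psi,\sigma,\tau,\pi)$.

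For part (2), the plan is to read off orders from the exact sequence. All the groups involved are finite: class groups of orders are finite, and the unit groups of the finite quotient rings are finite. The quotient $\quot{U(T)}{U(R)}$ is also finite. To see this, note that $\phi$ embeds $U(R)$ into $U(T)\oplus U(\sfrac{R}{I})$. Alternatively, the kernel of $U(T)\to U(\sfrac{T}{I})$ is $U(T)\cap(1+I)$, which lies in $U(R)$ by the argument used for $\ker(\psi)$. The cleanest route is to truncate the sequence to
$$1\to \quot{U(T)\oplus U(\sfrac{R}{I})}{\phi(U(R))}\to U(\sfrac{T}{I})\to \Cl(R)\to \Cl(T)\to 1.$$
The alternating product of orders in a finite exact sequence is $1$. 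Moreover,
$$\abs{\quot{U(T)\oplus U(\sfrac{R}{I})}{\phi(U(R))}}=\abs{\quot{U(T)}{U(R)}}\cdot\abs{U(\sfrac{R}{I})},$$
which holds because $\phi(v)$ has first coordinate $v$. Combining these gives the stated class number formula.

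For the equivalences, the previous theorem gives (1)$\iff$(4) with $\tau$ as the isomorphism. Since $\tau$ is surjective and the groups are finite, (3)$\iff$(4): equal finite orders make the surjection $\tau$ an isomorphism, and an isomorphism certainly gives equal orders. Finally, (2)$\iff$(3) is immediate from the formula.

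The only real obstacle is verifying that \cite[Theorem 5.3]{conrad} applies to the order $R$ and to ideals comaximal to $I$ in exactly the form needed. Everything after that is bookkeeping.
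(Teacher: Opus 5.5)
Your proposal is correct and takes the same route as the paper. The paper obtains exactness in the same way, by citing \cite[Theorem 5.3]{conrad} for integral representatives comaximal to $I$ in $\Cl(R)$ and $\Cl(T)$, together with finiteness of class groups and ideal norms, and then gets the class number formula and the equivalences from counting orders in the exact sequence and from the preceding theorem, as you describe. Your step from (4) to (3) to $\tau$ being an isomorphism correctly handles the difference between an abstract isomorphism $\Cl(R)\cong\Cl(T)$ and $\tau$ itself being one.
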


\section{Polynomial and Power Series Extensions with Changing Coefficients}

We now turn our attention to the rings of primary interest to this paper: rings of polynomials and formal power series in which the coefficient ring is allowed to change with each power of $x$. As we have previously discussed, these rings are often useful when searching for examples of unexpected behavior. We begin by defining a compact notation for such a ring.

\begin{definition}
    Let $\Gamma$ be an additively closed subset of $\bN_0$, $\{R_\alpha\}_{\alpha\in\Gamma}$ a collection of commutative rings indexed by $\Gamma$ such that $R_a\subseteq R_b$ for all $a,b\in \Gamma$ with $b-a\in\Gamma$. Also let $R=\bigcup_{\alpha\in \Gamma} R_i$. We define the \term{ring of polynomials in $x$ over $\{R_\alpha\}_{\alpha\in\Gamma}$} to be the subring of $R[x]$ given by
    $$R_{\Gamma}[x;\Gamma]:=\left\{\sum_{i=1}^n r_ix^{\alpha_i}\middle|n\in\bN, \alpha_i\in \Gamma, r_i\in R_{\alpha_i}\right\}.$$
    Similarly, we define the \term{ring of formal power series in $x$ over $\{R_\alpha\}_{\alpha\in\Gamma}$} to be the subring of $R[[x]]$ given by
    $$R_{\Gamma}[[x;\Gamma]]:=\left\{\sum_{\alpha\in\Gamma}r_\alpha x^\alpha\middle|r_\alpha\in R_\alpha\right\}.$$
    That is, $R_{\Gamma}[x;\Gamma]$ is the set of polynomials in $R[x]$ in which the coefficient of $x^\alpha$ is restricted to $R_\alpha$ for all $\alpha\in \Gamma$ (and similarly for $R_{\Gamma}[[x;\Gamma]]$). When $\Gamma=\bNo$, we will use the simpler shorthand $R_{\bNo}[x]$ and $R_\bNo[[x]]$.
\end{definition}

We take this opportunity to record a few notes about this notation. First, the $\Gamma$ which appears in the subscript of $R$ is meant to indicate the rings from which each coefficient may be drawn; the $\Gamma$ following the semicolon is meant to indicate the exponents of $x$ which are allowed. Second, note that we require $R_a\subseteq R_b$ whenever $b-a\in \Gamma$ to ensure that $R_\Gamma[x;\Gamma]$ and $R_\Gamma[[x;\Gamma]]$ are closed under multiplication. When $\Gamma=\bNo$, this condition simplifies to the requirement that $R_i\subseteq R_{i+1}$ for each $i\in\bNo$. 

For the majority of this paper, we will be focused on the case when $\Gamma=\bNo$. That said, we define these rings more generally to have access to the notation when needed. It is also worth noting that a regular ring of polynomials $R[x]$ (respectively, $R[[x]]$) is simply $R_{\bNo}[x]$ (respectively, $R_\bNo[[x]]$) with $R_i=R$ for all $i\in\bNo$. Thus, any results we find for these more general rings will apply to $R[x]$ and $R[[x]]$ as well.



Our primary goal in this paper is to determine when a subring of the form $R_{\bN_0}[x]$ (respectively, $R_{\bN_0}[[x]]$) is an associated or locally associated subring of a ring of the form $T_{\bN_0}[x]$ (respectively, $T_{\bN_0}[[x]]$). As such, it will be important to characterize the conductor ideal $(R_{\bN_0}[x]:T_{\bN_0}[x])$ (respectively, $(R_{\bN_0}[[x]]:T_{\bN_0}[[x]])$) and the units in these rings. We begin with the conductor ideal.

\begin{proposition}
    \label{conductor ideal}
    Let $\{R_i\}_{i=0}^\infty$ and $\{T_i\}_{i=0}^\infty$ be sequences of commutative rings such that $R_i\subseteq R_{i+1}$, $T_i\subseteq T_{i+1}$, and $R_i\subseteq T_i$ for all $i\in\bN_0$. For each $i\in\bN_0$, define $J_i:=\bigcap_{k=0}^\infty (R_{i+k}:T_k)$. Then $(R_{\bN_0}[x]:T_{\bN_0}[x])=J_{\bN_0}[x]$ and $(R_{\bN_0}[[x]]:T_{\bN_0}[[x]])=J_{\bN_0}[[x]]$.
\end{proposition}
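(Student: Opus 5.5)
Throughout, \(R_{\bN_0}[x]\) and \(T_{\bN_0}[x]\) are viewed inside \(T[x]\) and \(T[[x]]\), where \(T=\bigcup_i T_i\). We prove the two inclusions separately, and the polynomial and power series cases run in parallel. First we check that \(J_{\bN_0}[x]\) makes sense, i.e.\ that \(J_i\subseteq J_{i+1}\). Since \(R_{i+k}\subseteq R_{i+1+k}\), any \(a\) with \(aT_k\subseteq R_{i+k}\) also satisfies \(aT_k\subseteq R_{i+1+k}\). We also need \(J_i\subseteq R_i\), which holds because the \(k=0\) term \((R_i:T_0)\) is by definition a subset of \(R_i\) (the conductor as defined in the paper consists of elements of \(R\)). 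Consequently \(J_{\bN_0}[x]\subseteq R_{\bN_0}[x]\).

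\emph{The inclusion \(J_{\bN_0}[x]\subseteq\) conductor.} Let \(f=\sum_i a_ix^i\) with \(a_i\in J_i\), and let \(g=\sum_k t_kx^k\) with \(t_k\in T_k\). The coefficient of \(x^n\) in \(fg\) is \(\sum_{i+k=n}a_it_k\). Each term satisfies \(a_it_k\in (R_{i+k}:T_k)T_k\subseteq R_{i+k}=R_n\), so the coefficient lies in \(R_n\). Hence \(fg\in R_{\bN_0}[x]\), and \(f\) already lies in \(R_{\bN_0}[x]\). For power series the argument is identical, since each coefficient of \(fg\) is a finite sum.

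\emph{The reverse inclusion.} Let \(f=\sum a_ix^i\) be in the conductor. We must show \(a_iT_k\subseteq R_{i+k}\) for every \(i,k\). The natural test elements are the monomials \(tx^k\) with \(t\in T_k\), but multiplying by one of them mixes nothing. Indeed, \(f\cdot tx^k=\sum_i a_it\,x^{i+k}\), so the coefficient of \(x^{i+k}\) is exactly \(a_it\), and it must lie in \(R_{i+k}\). This gives \(a_i\in(R_{i+k}:T_k)\) for all \(k\), that is, \(a_i\in J_i\).

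There is one subtlety. The paper's conductor requires \(a_i\in R_i\) as an element, and this follows from \(f\in R_{\bN_0}[x]\). We must also make sure the monomial \(tx^k\) is actually an element of \(T_{\bN_0}[x]\) (respectively \(T_{\bN_0}[[x]]\)), which is clear from the definition. The power series case goes through verbatim.

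The main obstacle is thus only bookkeeping: ensuring the conductor is taken with the paper's convention (elements of \(R\)) and confirming \(J_i\subseteq R_i\) with the \(J_i\) increasing. The substantive content is the observation that monomial test elements isolate each coefficient.
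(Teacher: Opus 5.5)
Your proposal is correct and follows the same route as the paper. You use monomial test elements $t x^k$ with $t\in T_k$ to isolate each coefficient, giving $a_i\in(R_{i+k}:T_k)$ for all $k$ and hence the inclusion of the conductor in $J_{\bN_0}[x]$. The reverse inclusion is the coefficientwise check $a_it_k\in R_{i+k}$, together with $J_i\subseteq(R_i:T_0)\subseteq R_i$. Your extra verification that $J_i\subseteq J_{i+1}$, so that $J_{\bN_0}[x]$ is well-formed under the paper's definition, is a small point the paper leaves implicit.
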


\begin{proof}
    We will show that $(R_{\bN_0}[x]:T_{\bN_0}[x])=J_{\bN_0}[x]$; the proof for power series is nearly identical.

    First, assume that $f=a_0+a_1x+\dots+a_nx^n\in (R_{\bN_0}[x]:T_{\bN_0}[x])$; that is, $f\in R_{\bN_0}[x]$, and if $g\in T_{\bN_0}[x]$, then $fg\in R_{\bN_0}[x]$. Then in particular, for any $k\in\bN_0$ and $t_k\in T_k$, $t_kx^kf=(a_0t_k)x^k+(a_1t_k)x^{k+1}+\dots+(a_nt_k)x^{k+n}\in R_{\bN_0}[x]$. Since $a_0\in R_0\subseteq R_1\subseteq\dots$ must conduct any $t_k\in T_k$ into $R_k$, $a_0\in (R_k:T_k)$ for every $k\in\bN_0$. Then $a_0\in J_0$. More generally, since $a_i\in R_i\subseteq R_{i+1}\subseteq\dots$ must conduct any $t_k\in T_k$ into $R_{i+k}$, $a_i\in (R_{i+k}:T_k)$ for every $k\in\bN_0$. Then $a_i\in J_i$ for every $i\in\bN_0$. Thus, $(R_{\bN_0}[x]:T_{\bN_0}[x])\subseteq J_{\bN_0}[x]$.

    Now let $f=a_0+a_1x+\dots+a_nx^n\in J_{\bNo}[x]$; that is, $a_i\in J_i$ for each $i\in\bNo$ (in particular, $a_i\in J_i\subseteq (R_i:T_0)\subseteq R_i$, so $f\in R_{\bNo}[x]$). Also let $g(x)=t_0+t_1x+\dots+t_mx^m\in T_{\bNo}[x]$. Then $(fg)(x)=\sum_{k=0}^{n+m}(a_0t_k+a_1t_{k-1}+\dots+a_kt_0)x^k$ (with $t_i=0$ and $a_j=0$ for $i>m$ and $j>n$). Since each $a_i\in J_i$, then for each $k\in\bN_0$, $a_i\in (R_{i+k}:T_k)$. Then in the above summation, $a_it_{k-i}\in R_k$, so $fg\in R_\bNo[x]$. Thus, $J_{\bNo}[x]\subseteq (R_{\bNo}[x]:T_{\bNo}[x])$, so we conclude that $(R_{\bNo}[x]:T_{\bNo}[x])= J_{\bNo}[x]$.
\end{proof}

\begin{remark}
    With the notation as above, note that $J_i$ must be an ideal of $R_i$ for each $i\in\bN_0$ and thus must be a subring of $T_i$. However, $J_i$ is not necessarily an ideal of $T_i$. Nevertheless, since $R_{\bNo}[x]$ is a subring of $T_{\bNo}[x]$ and $J_{\bNo}[x]=(R_{\bNo}[x]:T_{\bNo}[x])$, it is not difficult to see that $J_{\bNo}[x]$ is an ideal of both $R_{\bNo}[x]$ and $T_{\bNo}[x]$ (similarly for power series).

    When we consider the quotient rings $\quot{R_{\bNo}[x]}{J_{\bNo}[x]}$ and $\quot{T_{\bNo}[x]}{J_{\bNo}[x]}$, we will need to keep in mind two things. First, note that $J_i$ is not necessarily an ideal of $T_i$; thus, we cannot consider term-by-term ring quotients $\quot{T_i}{J_i}$. However, if we instead treat $J_i$ as an additive subgroup of $T_i$, we can treat $\quot{T_{\bNo}[x]}{J_{\bNo}[x]}$ as the set $(\quot{T_\bNo}{J_\bNo})[x]$, where $\quot{T_{i}}{J_{i}}$ is understood to be the set of additive cosets of $T_i$ modulo $J_i$ (similarly for power series). The second thing to note is that even though the $\quot{R_i}{J_i}$'s are valid quotient rings, we may still not be able to treat $\quot{R_i}{J_i}$ as a subring of $\quot{R_{i+1}}{J_{i+1}}$ (even by an embedding). Thus, in both cases we will need to be careful when multiplying cosets.

    As a final note, we observe that in the constant term, $J_0$ must be an ideal of $T_0$. To see this, let $\alpha\in J_0$ and $t\in T_0$. Then for any $t_k\in T_k$, note that $tt_k\in T_k$, so $(\alpha t)t_k=\alpha(tt_k)\in R_k$. Thus, $\alpha t\in (R_k:T_k)$ for every $k\in\bN_0$, so $\alpha t\in J_0$.
\end{remark}

\begin{example}
    \label{conductor ideal example}
    Let $T_0=R_0=R_1=R_2=\bZ$ and $R_i=T_j=\bQ$ for $i> 2$ and $j>0$. Then $J_0=J_1=\{0\}$ (since $(R_1:T_1)=(R_2:T_1)=\{0\}$), $J_2=\bZ$ (since $(R_2:T_0)=\bZ$ and $(R_{k+2}:T_k)=\bQ$ for every $k\in\bN$), and $J_i=\bQ$ for every $i>2$ (since $(R_{k+i}:T_k)=\bQ$ for every $k\in\bN_0$---note here that it is important that $(\bQ:\bZ)=\{\alpha\in\bQ|\alpha\bZ\subseteq\bQ\}=\bQ$ rather than $\{\alpha\in\bZ|\alpha\bZ\subseteq\bQ\}=\bZ$). In this case, note that $J_2=\bZ$ is not an ideal of $T_2=\bQ$. However, $J_{\bNo}[x]=x^2\bZ+x^3\bQ[x]$ is still an ideal of $T_{\bNo}[x]=\bZ+x\bQ[x]$; in fact, $J_{\bNo}[x]$ is the principal ideal generated by $x^2$. Also note that $\quot{R_0}{J_0}=\quot{R_1}{J_1}\cong \bZ$ cannot be embedded as a subring into $\quot{R_2}{J_2}\cong \{0\}$. Then for instance, when multiplying the nonzero cosets $(x+J_{\bN_0}[x])(x+J_\bNo[x])$, we would get the zero coset $J_{\bNo}[x]$.
\end{example}

Typically when dealing with usual rings of polynomials or formal power series, the units are easy to characterize using standard results: the units in a polynomial ring are those whose constant coefficient is a unit and whose remaining terms are nilpotent; and the units in a ring of formal power series are those whose constant coefficient is a unit. As it turns out, the groups of units in rings of polynomials or power series over a sequence of rings have a very similar structure. Despite the proofs following by largely the same argument as the more standard results, we include them here for completeness.

\begin{proposition}
    \label{unit construction}
    Let $R_i$, $T_i$, and $J_i$ be as in Proposition \ref{conductor ideal} for $i\in\bN_0$. Also define $R:=\bigcup_{i=0}^\infty R_i$, $T:=\bigcup_{i=0}^\infty T_i$, and $J:=\bigcup_{i=0}^\infty J_i$. Then:
    \begin{enumerate}
        \item The units in $T_{\bNo}[x]$ are exactly the polynomials whose constant term is a unit and whose remaining terms are nilpotent in $T$ (equivalently, nilpotent in $T_i$). That is,
        $$U(T_{\bNo}[x])=U(T_0)+\Nil(T_{\bN})[x;\bN].$$
        The same will hold for $R_{\bNo}[x]$.
        \item The units in $T_{\bNo}[[x]]$ are exactly the power series whose constant term is a unit. That is,
        $$U(T_{\bNo}[[x]])=U(T_0)+T_{\bN}[[x;\bN]].$$
        The same will hold for $R_{\bNo}[[x]]$.
        \item The units in $\quot{
        T_{\bNo}[x]}{J_{\bNo}[x]}$ are exactly the cosets represented by polynomials in $T_{\bNo}[x]$ whose constant terms are units modulo $J_0$ and whose remaining terms are nilpotent modulo $T_i\cap J$ (equivalently, nilpotent as elements of $T$ modulo $J$). That is, in a slight abuse of notation,
        $$U(\quot{
        T_{\bNo}[x]}{J_{\bNo}[x]})=U(\quot{T_0}{J_0})+\Nil(\quot{T_\bN}{T_{\bN}\cap J})[x;\bN].$$
        The same will hold for $\quot{R_{\bNo}[x]}{J_{\bNo}[x]}$.
        \item The units in $\quot{
        T_{\bNo}[[x]]}{J_{\bNo}[[x]]}$ are exactly the cosets represented by power series in $T_{\bNo}[[x]]$ whose constant terms are units modulo $J_0$. That is, in a slight abuse of notation,
        $$U(\quot{
        T_{\bNo}[[x]]}{J_{\bNo}[[x]]})=U(\quot{T_0}{J_0})+(\quot{T_{\bN}}{J_{\bN}})[[x;\bN]].$$
        The same will hold for $\quot{R_{\bNo}[[x]]}{J_{\bNo}[[x]]}$.
    \end{enumerate}
\end{proposition}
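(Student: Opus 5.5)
The plan is to mirror the classical arguments for $R[x]$ and $R[[x]]$, working inside the ambient rings $T[x]$ and $T[[x]]$ where $T=\bigcup T_i$. The main point to check is that inverses computed there have coefficients in the correct $T_i$ (respectively modulo $J_i$).

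For (1), I embed $T_{\bNo}[x]\subseteq T[x]$. If $f=a_0+\sum_{i\ge1}a_ix^i$ with $a_0\in U(T_0)$ and each $a_i$ nilpotent, then $f=a_0(1+n)$ with $n=a_0^{-1}\sum a_ix^i$ nilpotent. The inverse is the finite geometric sum $a_0^{-1}\sum_k(-n)^k$. The coefficient of $x^m$ in $n^k$ is a sum of products $a_0^{-k}a_{i_1}\cdots a_{i_k}$ with $i_1+\dots+i_k=m$. Each $a_{i_j}\in T_{i_j}\subseteq T_m$, so this coefficient lies in $T_m$, and the inverse lies in $T_{\bNo}[x]$.

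Conversely, if $fg=1$ in $T_{\bNo}[x]$, then the constant terms give $a_0b_0=1$ with $b_0\in T_0$. Also $f$ is a unit of $T[x]$, so the standard result (via prime ideals of $T$: $f$ reduced modulo every prime is a unit of a polynomial ring over a domain, hence constant) shows the higher coefficients are nilpotent in $T$. Nilpotency of $a_i$ in $T$ and in $T_i$ coincide because $T_i$ is a subring. The same argument works verbatim for $R_{\bNo}[x]$.

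For (2), necessity is immediate from the constant term. For sufficiency, I solve $fg=1$ recursively: $b_0=a_0^{-1}$ and $b_m=-a_0^{-1}\sum_{i=1}^m a_ib_{m-i}$. By induction $b_{m-i}\in T_{m-i}$, and since $a_i\in T_i$ and $T_i,T_{m-i}\subseteq T_m$, the product $a_ib_{m-i}$ lies in $T_m$. Hence $b_m\in T_m$.

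For (3) and (4), the quotients are handled with the Remark after Proposition \ref{conductor ideal}. $J_{\bNo}[x]$ is an ideal of $T_{\bNo}[x]$, and $J_0$ is an ideal of $T_0$. For (4), I would argue that a coset $f+J_{\bNo}[[x]]$ is a unit iff there is $g$ with $fg-1\in J_{\bNo}[[x]]$. Comparing constant terms gives $a_0b_0-1\in J_0$, so $a_0$ is a unit modulo $J_0$. Conversely, if $a_0c\equiv1\pmod{J_0}$ with $c\in T_0$, then $a_0c=1-\beta$ with $\beta\in J_0$. Here $1-\beta+xh$ is a unit of $T_{\bNo}[[x]]$ only if $1-\beta\in U(T_0)$, which need not hold. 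So instead I will lift: $fc=(1-\beta)+(\text{higher})$. I claim that modulo $J_{\bNo}[[x]]$ the element $-\beta$ may be discarded, since $\beta\in J_0$ and $J_0+0x+\dots\subseteq J_{\bNo}[[x]]$. Then $fc\equiv 1+xh$, and $1+xh$ is invertible in $T_{\bNo}[[x]]$ by (2). This makes the coset of $f$ a unit, with inverse the coset of $c(1+xh)^{-1}$.

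For (3), I will proceed similarly. Sufficiency: write $f\equiv a_0c^{-1}$-type unit times $1+N$, where the coefficients of $N$ are nilpotent modulo $J$. To make the geometric series terminate modulo $J_{\bNo}[x]$, I need $N^k\in J_{\bNo}[x]$ for large $k$. This is the main obstacle. Nilpotence of $a_i$ modulo $T_i\cap J$ places $a_i^e$ in some $J_j$, possibly with $j\ne i$, while degree considerations put the coefficients of $N^k$ in degree $m$. I would first try to show that $J_i\subseteq J_{i+1}$ and that $J\cap T_m$-membership of a product of degree-$m$ type implies membership in $J_m$, using the definition $J_i=\bigcap_k(R_{i+k}:T_k)$. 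A nilpotent-modulo-$J$ product then lands in the correct $J_m$. If this fails, I would reinterpret "nilpotent modulo $T_i\cap J$" exactly as the condition needed, and derive necessity by reducing modulo primes of $T_{\bNo}[x]$ containing $J_{\bNo}[x]$.
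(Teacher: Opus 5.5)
Your arguments for (1), (2) and (4) are sound. In (4) you reduce to (2) by absorbing $\beta\in J_0$ into the ideal $J_{\bNo}[[x]]$ of $T_{\bNo}[[x]]$. That is shorter than the paper's term-by-term construction of an inverse, which instead needs $a_0T_k+J_k=T_k$, obtained from $J_0T_k\subseteq J_k$.

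The genuine gap is (3), where neither direction is actually proved.
For sufficiency, the lemma you propose for the degree mismatch is false. You want a degree-$m$ coefficient lying in $J\cap T_m$ to lie in $J_m$. But in Example \ref{conductor ideal example}, $T_1=\bQ$, $J_1=\{0\}$ and $J=\bQ$, so $1/2\in T_1\cap J$ while $1/2\notin J_1$. ``Reinterpreting'' the hypothesis would change the statement rather than prove it.
For necessity, reducing modulo primes of $T_{\bNo}[x]$ is only named, not carried out. Moreover, $T_{\bNo}[x]/P$ is not a polynomial ring over a domain, so the classical argument does not transfer verbatim.

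The missing idea is that the index of $J$ need not match the degree exactly. It only has to be pushed upward, and taking powers raises the degree.
Directly from $J_i=\bigcap_k(R_{i+k}:T_k)$ and the monotonicity of the $R_i$ and $T_i$, you get $J_j\subseteq J_{j+1}$ and $J_jT_k\subseteq J_{j+k}$. For the second, take $\alpha\in J_j$ and $t\in T_k$; then for every $m$,
\[
\alpha tT_m\subseteq \alpha T_{\max(k,m)}\subseteq R_{j+\max(k,m)}\subseteq R_{j+k+m},
\]
and $\alpha t\in R_{j+k+m}$ since $1\in T_m$.

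Now suppose $a_i\in T_i$ with $a_i^e\in J_j$. Then $a_i^{eN}\in J_jT_i\subseteq J_{i+j}$ for all $N\geq 1$. So $(a_ix^i)^{eN}=a_i^{eN}x^{ieN}\in J_{\bNo}[x]$ as soon as $ieN\geq i+j$.
Thus each $a_ix^i$ is nilpotent modulo $J_{\bNo}[x]$. Also $a_0$ is a unit modulo $J_{\bNo}[x]$ because $J_0\subseteq J_{\bNo}[x]$, and a unit plus nilpotents is a unit. This is exactly the paper's one-line argument, which uses these inclusions implicitly.

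The same inclusions show that $J$ is an ideal of $T$, which gives necessity the way the paper does it. From $fg+\beta=1$ with $g\in T[x]$ and $\beta\in J[x]$, $f$ is a unit of $(T/J)[x]$, so its positive-degree coefficients are nilpotent modulo $J$.
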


\begin{proof}[Proof of (1)]
    First, suppose that $f=a_0+a_1x+\dots+a_nx^n\in U(T_{\bNo}[x])$. Then there exists $g=b_0+b_1x+\dots+b_nx^n\in U(T_{\bNo}[x])$ such that $fg=1$. The constant term in this product is $a_0b_0=1$, so $a_0\in U(T_0)$. Furthermore, since $f$ and $g$ also both lie in $T[x]$, then $f\in U(T[x])$. Then $f=a_0+a_1x+\dots+a_nx^n$, with $a_1,\dots,a_n\in \Nil(T)$. Any element of $T_i$ which is nilpotent in $T$ is also nilpotent in $T_i$, so for $1\leq i\leq n$, $a_i\in \Nil(T_i)$. Thus, $U(T_\bNo[x])\subseteq U(T_0)+\Nil(T_\bN)[x;\bN]$. Moreover, an element $f=a_0+a_1x+\dots+a_nx^n\in U(T_0)+\Nil(T_\bN)[x;\bN]$ is clearly the sum of a unit $a_0\in U(T_{\bNo}[x])$ plus the nilpotent elements $a_ix^i$, so $f\in U(T_\bNo[x])$. Thus, $U(T_{\bNo}[x])=U(T_0)+\Nil(T_{\bN})[x;\bN].$ The same logic shows that $U(R_{\bNo}[x])=U(R_0)+\Nil(R_{\bN})[x;\bN].$
\end{proof}

\begin{proof}[Proof of (2)]
    The fact that the constant term of any unit in $T_{\bNo}[[x]]$ must be a unit in $T_0$ follows exactly as in the proof of (1). Then all that needs to be shown for this part is that any power series whose constant term is a unit must itself be a unit. To show this, let $f=u+a_1x+a_2x^2+\cdots\in U(T_0)+T_\bN[x;\bN]$. To show that $f$ is invertible, we will find $b_i\in T_i$ for $i\in\bN$ such that $fg=1$, with $g=u^{-1}+b_1x+b_2x^2+\cdots$. As in the standard proof characterizing the units in a ring of formal power series, we set
    $$b_k=-u^{-1}(u^{-1}a_k+b_1a_{k-1}+\cdots+b_{k-1}a_1)$$
    for each $k\in\bN$. Note that each element on the right-hand side of this expression is in $T_k$, so $b_k\in T_k$; thus, $g\in T_\bNo[[x]]$. Then in the expansion of the product $fg$, we find that the constant term is $uu^{-1}=1$ and each higher-degree coefficient is 0. Then $f\in U(T_{\bNo}[[x]])$ with $f^{-1}=g$, so $U(T_{\bNo}[[x]])=U(T_0)+T_{\bN}[[x;\bN]]$. The same logic shows that $U(R_{\bNo}[[x]])=U(R_0)+R_{\bN}[[x;\bN]].$
\end{proof}

\begin{proof}[Proof of (3)]
    Again, the fact that for $f=a_0+a_1x+\dots+a_nx^n\in T_{\bNo}[x]$ to be invertible modulo $J_{\bNo}[x]$, $a_0$ must be a unit modulo $J_0$ (i.e., $a_0+J_0\in U(\quot{T_0}{J_0})$) is obvious and follows in the same manner as before. Furthermore, if $f$ is a unit modulo $J_{\bNo}[x]$, there must exist $g\in T_{\bNo}[x]$ and $\beta\in J_{\bNo}[x]$ such that $fg+\beta=1$. Since $f,g\in T[x]$ and $\beta\in J[x]$, this tells us that $f$ is also an element of $T[x]$ which is invertible modulo $J[x]$. Thus, each $a_i$ for $1\leq i\leq n$ must be nilpotent modulo $J$. This means that some power of $a_i$ lies in $T_i\cap J$, so $a_i\in \sqrt{T_i\cap J}$. Therefore, $U(\quot{T_\bNo[x]}{J_\bNo[x]})\subseteq U(\quot{T_0}{J_0})+\Nil(\quot{T_\bN}{T_\bN\cap J})[x;\bN]$. Moreover, any $f+J_\bNo[x]=a_0+a_1x+\dots+a_nx^n+J_\bNo[x]\in U(\quot{T_0}{J_0})+\Nil(\quot{T_\bN}{T_\bN\cap J})[x;\bN]$ is the sum of the unit $a_0+J_\bNo[x]$ and the nilpotent elements $a_ix^i+J_\bNo[x]$ for $1\leq i\leq n$. Then $U(\quot{T_{\bNo}[x]}{J_{\bNo}[x]})=U(\quot{T_0}{J_0})+\Nil(\quot{T_\bN}{T_{\bN}\cap J})[x;\bN].$ The same logic shows that $U(\quot{R_{\bNo}[x]}{J_{\bNo}[x]})=U(\quot{R_0}{J_0})+\Nil(\quot{R_\bN}{R_{\bN}\cap J})[x;\bN].$
\end{proof}

\begin{proof}[Proof of (4)]
    The fact that the constant term of any unit power series must be a unit modulo $J_0$ follows as usual. We must simply show that any power series in $T_{\bNo}[[x]]$ whose constant term is invertible modulo $J_0$ is in fact a unit modulo $J_{\bNo}[[x]]$. To do so, let $f=a_0+a_1x+a_2x^2+\cdots\in T_\bNo[[x]]$, with $a_0+J_0\in U(\quot{T_0}{J_0})$. We must construct a power series $g=b_0+b_1x+b_2x^2+\cdots$ such that $fg=1+h$, with $h\in J_{\bNo}[[x]]$. First, we select $b_0\in T_0$ such that $a_0b_0=1+\beta_0$ for some $\beta_0\in J_0$. Now assume for some $k\in\bN$ that we have selected $b_i\in T_i$ for $0\leq i<k$. By expanding the product $fg$ and solving for the term $a_0b_k$ in the coefficient of $x^k$, we see that we now need to select $b_k\in T_k$ and $\beta_k\in J_k$ such that $a_0b_k+\beta_k=-(a_kb_0+a_{k-1}{b_1}+\cdots+a_1b_{k-1})$. Since the right-hand side of this equation is an element of $T_k$, it will suffice to show that $a_0T_k+J_k=T_k$. We already know that $a_0T_0+J_0=T_0$. Then consider the set $J_0T_k$. Note that for any $i\in\bN_0$, $T_kT_i= T_{\max\{k,i\}}\subseteq T_{k+i}$. Then $(J_0T_k)T_i = J_0(T_kT_i)\subseteq J_0T_{k+i}\subseteq R_{k+i}$, so $J_0T_k\subseteq J_k$. Therefore, $T_k=T_0T_k=(a_0T_0+J_0)T_k\subseteq a_0T_k+J_0T_k\subseteq a_0T_k+J_k\subseteq T_k$. Then $a_0T_k+J_k=T_k$, so we can indeed select appropriate elements $b_k\in T_k$ and $\beta_k\in J_k$ satisfying the necessary relation. Then $f$ is invertible modulo $J_\bNo[[x]]$, so the units in $\quot{T_{\bNo}[[x]]}{J_{\bNo}[[x]]}$ are exactly the cosets represented by power series whose constant terms are units modulo $J_0$. That is, $U(\quot{T_{\bNo}[[x]]}{J_{\bNo}[[x]]})=U(\quot{T_0}{J_0})+(\quot{T_{\bN}}{J_{\bN}})[[x;\bN]].$ The same logic shows that $U(\quot{R_{\bNo}[[x]]}{J_{\bNo}[[x]]})=U(\quot{R_0}{J_0})+(\quot{R_{\bN}}{J_{\bN}})[[x;\bN]].$
\end{proof}

\section{(Locally) Associated Subrings in Polynomial Extensions}

With these preliminaries handled, we turn our attention to the major questions of this paper. In this section, we will characterize when $R_\bNo[x]$ is an associated or locally associated subring of $T_\bNo[x]$ for sequences $\{R_i\}_{i=0}^\infty$ and $\{T_i\}_{i=0}^\infty$. The results in this section will be easiest to state and work with when the $T_i$'s are reduced rings (i.e., when the $T_i$'s have no nonzero nilpotent elements --- note that this will automatically hold if the $T_i$'s are integral domains). However, we state them as generally as we are able. For the convenience of the reader, we will also include two corollaries to each major result. The first will restate the result as it applies to standard polynomial rings $R[x]\subseteq T[x]$; the second will narrow further to the motivating case when $R$ is an order in a number field and $T=\Rbar$ is the full ring of integers.

\begin{theorem}
    \label{associated polynomials}
    Let $\{R_i\}_{i=0}^\infty$ and $\{T_i\}_{i=0}^\infty$ be increasing sequences of commutative rings with identity such that $R_i\subseteq T_i$ for each $i\in\bN_0$. Let $S=R_0\cap U(T_0)$ (a multiplicative subset of $T_0$), and assume that for each $i\in\bN$, $\Nil(T_i)=S^{-1}\Nil(R_i)$ (in particular, this holds when each $T_i$ is reduced). Then $R_{\bNo}[x]$ is an associated subring of $T_{\bNo}[x]$ if and only if the following conditions both hold:
    \begin{enumerate}
        \item $R_0$ is an associated subring of $T_0$;
        \item $T_i=S^{-1}R_i$ for each $i\in\bN$.
    \end{enumerate}
\end{theorem}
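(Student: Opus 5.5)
The plan is to prove both directions directly from Definition \ref{associated}(2), using the description of units from Proposition \ref{unit construction}(1): $U(T_{\bNo}[x])=U(T_0)+\Nil(T_{\bN})[x;\bN]$. The hypothesis $\Nil(T_i)=S^{-1}\Nil(R_i)$ is needed only to control the nilpotent tail of a unit in the necessity direction. I note first that the inclusion $S^{-1}R_i\subseteq T_i$ always holds, since $S\subseteq U(T_0)\subseteq U(T_i)$ and $R_i\subseteq T_i$. Hence condition (2) amounts to the inclusion $T_i\subseteq S^{-1}R_i$.

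For sufficiency, assume (1) and (2), and let $f=t_0+t_1x+\dots+t_nx^n\in T_{\bNo}[x]$.
\begin{enumerate}
    \item By (1), choose $u_0\in U(T_0)$ with $u_0t_0\in R_0$.
    \item For $1\le i\le n$ we have $u_0t_i\in T_i=S^{-1}R_i$. Write $u_0t_i=s_i^{-1}r_i$ with $s_i\in S$ and $r_i\in R_i$.
    \item Set $s=s_1\cdots s_n\in S$. Then $su_0t_i\in R_i$ for each $i\ge1$, and $su_0t_0\in R_0$ because $s\in R_0$.
\end{enumerate}
Since $su_0\in U(T_0)\subseteq U(T_{\bNo}[x])$ and $su_0f\in R_{\bNo}[x]$, the subring is associated. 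No nilpotent hypothesis is used in this direction.

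For necessity, suppose $R_{\bNo}[x]$ is associated in $T_{\bNo}[x]$. To get (1), take $t\in T_0$ and a unit $u=u_0+a_1x+\dots+a_mx^m$ with $ut\in R_{\bNo}[x]$. Then $u_0\in U(T_0)$ and the constant term $u_0t$ lies in $R_0$. To get (2), fix $i\ge1$ and $t\in T_i$, and apply the associated property to $f=1+tx^i$; this test polynomial is the key choice. Let $u$ be a unit, written as above, with $uf\in R_{\bNo}[x]$.
\begin{enumerate}
    \item The constant term of $uf$ is $u_0$, so $u_0\in R_0\cap U(T_0)=S$.
    \item The coefficient of $x^i$ in $uf$ is $a_i+u_0t$ (interpreting $a_i=0$ if $i>m$), so $a_i+u_0t=r\in R_i$.
    \item Since $a_i\in\Nil(T_i)=S^{-1}\Nil(R_i)$, write $a_i=s^{-1}\eta$ with $s\in S$ and $\eta\in\Nil(R_i)$.
    \item Then $t=(u_0s)^{-1}(sr-\eta)$, and $u_0s\in S$, so $t\in S^{-1}R_i$.
\end{enumerate}
This gives $T_i\subseteq S^{-1}R_i$, which by the first paragraph is condition (2).

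The main obstacle is the necessity of (2). Testing with $tx^i$ alone only yields $u_0t\in R_i$ with $u_0$ an arbitrary unit of $T_0$, which does not place $t$ in $S^{-1}R_i$. Adding the constant $1$ in $f=1+tx^i$ forces $u_0$ into $S$. The nilpotent coefficient $a_i$ of the unit then contaminates the $x^i$ coefficient, and this is exactly where the assumption $\Nil(T_i)=S^{-1}\Nil(R_i)$ is used to absorb it.
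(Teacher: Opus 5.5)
Your proposal is correct and follows the paper's proof essentially step for step. The shared steps are: the constant term for condition (1), the test polynomial $1+tx^i$ forcing $u_0\in S$, and the hypothesis $\Nil(T_i)=S^{-1}\Nil(R_i)$ to absorb the nilpotent coefficient $a_i$.

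In the sufficiency direction, your choice to write $u_0t_i$ (rather than $t_i$) as $s_i^{-1}r_i$ is actually the right one. The paper's multiplier $u_0s_1\cdots s_n$, applied with $t_i=s_i^{-1}r_i$, leaves a factor $u_0\in U(T_0)$ that need not lie in $R_i$, and your version avoids that slip.
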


\begin{proof}
    First, assume that $R_\bNo[x]$ is an associated subring of $T_\bNo[x]$. Then for any $t_0\in T_0$, there must exist $u=u_0+b_1x+\dots+b_nx^n\in U(T_\bNo[x])$ such that $t_0u\in R_{\bNo}[x]$. By Proposition \ref{unit construction}, this means that $u_0\in U(T_0)$ and $b_i\in \Nil(T_i)$ for each $1\leq i\leq n$. In particular, $t_0u_0\in R_0$, so $R_0$ must be an associated subring of $T_0$.

    Now for some $i\in\bN$, let $t_i\in T_i$ and consider $1+t_ix^i\in T_\bNo[x]$. Since $R_\bNo[x]$ is an associated subring of $T_\bNo[x]$, there must exist some $u=u_0+b_1x+\dots+b_nx^n\in U(T_\bNo[x])$ such that $(1+t_ix^i)u\in R_{\bNo}[x]$. The constant term of this product is $u_0$, so $u_0\in S=R_0\cap U(T_0)$. The coefficient of $x^i$ must be $u_0t_i+b_i=r_i\in R_i$. By Proposition \ref{unit construction}, we know that $b_i\in \Nil(T_i)=S^{-1}\Nil(R_i)$, so $b_i=s^{-1}r$ for some $s\in S$ and $r\in R_i$. Thus,
    $$t_i=\frac{sr_i-r}{u_0s}\in S^{-1}R_i,$$
    so $T_i\subseteq S^{-1}R_i$. The reverse inclusion is obvious, so $T_i=S^{-1}R_i$ for each $i\in\bN$.

    For the converse, assume that $R_0$ is an associated subring of $T_0$ and $T_i=S^{-1}R_i$ for each $i\in\bN$, and let $f=t_0+t_1x+\dots+t_nx^n\in T_\bNo[x]$. Since $R_0$ is an associated subring of $T_0$, there exists some $u_0\in U(T_0)$ such that $t_0u_0\in R_0$. Furthermore, since $T_i=S^{-1}R_i$ for each $i\in\bN$, there must exist $r_i\in R_i$ and $s_i\in S$ such that $t_i=s_i^{-1}r_i$ for each $1\leq i\leq n$. Then $(u_0s_1\cdots s_n)f\in R_\bNo[x]$ with $u_0s_1\cdots s_n\in U(T_0)\subseteq U(T_\bNo[x])$. Therefore, $R_\bNo[x]$ is an associated subring of $T_\bNo[x]$.
\end{proof}

\begin{corollary}
    Let $T$ be a commutative ring with identity, $R$ a subring of $T$, and $S=R\cap U(T)$. If $\Nil(T)=S^{-1}\Nil(R)$ (in particular, if $T$ is reduced), then $R[x]$ is an associated subring of $T[x]$ if and only if $T$ is a localization of $R$ (specifically, $T=S^{-1}R$).
\end{corollary}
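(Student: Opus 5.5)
This is the special case of Theorem \ref{associated polynomials} in which $R_i=R$ and $T_i=T$ for every $i\in\bNo$, so that $R_\bNo[x]=R[x]$ and $T_\bNo[x]=T[x]$. The plan is to check that the hypothesis matches and then show that the two conditions of the theorem collapse to the single condition $T=S^{-1}R$.

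First, the hypothesis. With constant sequences, the theorem's set is $S=R_0\cap U(T_0)=R\cap U(T)$, which is the $S$ of the corollary. The theorem's nilpotent condition $\Nil(T_i)=S^{-1}\Nil(R_i)$ for $i\in\bN$ becomes the single equation $\Nil(T)=S^{-1}\Nil(R)$, which is exactly what is assumed. If $T$ is reduced, then $\Nil(T)=0$, and $\Nil(R)\subseteq\Nil(T)=0$ gives $S^{-1}\Nil(R)=0$, so the reduced case is covered as well. Theorem \ref{associated polynomials} therefore says that $R[x]$ is associated in $T[x]$ if and only if (1) $R$ is an associated subring of $T$ and (2) $T=S^{-1}R$.

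It remains to see that condition (2) alone implies (1); this is the only step that needs an argument. Suppose $T=S^{-1}R$ and let $t\in T$. Write $t=s^{-1}r$ with $s\in S$ and $r\in R$. Since $s\in R\cap U(T)$, we have $u:=s\in U(T)$ and $tu=r\in R$, so $R$ is associated in $T$ by characterization (2) of Definition \ref{associated}. Hence (1) and (2) together are equivalent to $T=S^{-1}R$, which is the statement of the corollary.

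There is no real obstacle here. The only points to watch are that $S^{-1}R$ means the subring $\{s^{-1}r\}$ of $T$ (it makes sense because each $s\in S$ is already a unit of $T$), and that the nilpotent hypothesis, imposed in the theorem only for $i\in\bN$, is the same single condition for every index.
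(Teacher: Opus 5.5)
Your proposal is correct and matches how the paper obtains this corollary, which it states without separate proof as a direct specialization of Theorem \ref{associated polynomials} to the constant sequences $R_i=R$, $T_i=T$. Your one extra step, that $T=S^{-1}R$ already makes $R$ associated in $T$ (for $t=s^{-1}r$ take $u=s$), is exactly what the paper leaves implicit when it omits condition (1) from the corollary.
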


\begin{corollary}
    Let $R$ be an order in a number field. Then $R[x]$ is an associated subring of $\Rbar[x]$ if and only if $R=\Rbar$. That is, $R[x]$ is not an associated subring of $\Rbar[x]$ for any non-maximal order $R$.
\end{corollary}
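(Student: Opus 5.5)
The plan is to deduce this from the preceding corollary (the $R[x]\subseteq T[x]$ case of Theorem \ref{associated polynomials}) with $T=\Rbar$. Since $\Rbar$ is a ring of integers, it is a Dedekind domain and hence reduced, so $\Nil(\Rbar)=\{0\}=S^{-1}\Nil(R)$ and the hypothesis is satisfied. Here $S=R\cap U(\Rbar)$. By the corollary, $R[x]$ is an associated subring of $\Rbar[x]$ if and only if $\Rbar=S^{-1}R$. If $R=\Rbar$, this holds trivially, since $S=U(\Rbar)$ and $S^{-1}\Rbar=\Rbar$. So it remains to prove the converse: $\Rbar=S^{-1}R$ forces $R=\Rbar$.

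The main step is to show $S=U(R)$. Let $s\in S$, so $s\in R$ and $s^{-1}\in\Rbar$. I would take the norm route. Since $s$ is a unit of $\Rbar$, its norm satisfies $N_{K/\bQ}(s)=\pm1$. The characteristic polynomial of $s$ over $\bQ$ is monic with integer coefficients and constant term $\pm N(s)=\pm1$. From Cayley--Hamilton, $s^{-1}$ is a $\bZ$-polynomial in $s$, hence lies in $\bZ[s]\subseteq R$.

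An alternative argument avoids norms. $R$ is an order, so $\Rbar$ is a finitely generated $R$-module and the extension is integral. Then $s^{-1}\in\Rbar$ satisfies a monic equation over $R$, and multiplying through by $s^{n-1}$ expresses $s^{-1}$ as an element of $R$. This is the standard fact that $U(T)\cap R=U(R)$ for an integral extension $R\subseteq T$, and it is the cleaner version to write up.

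With $S=U(R)$ established, $S^{-1}R=R$, so the condition $\Rbar=S^{-1}R$ reads $\Rbar=R$. This gives the equivalence, and the second sentence of the statement is just its contrapositive. I do not expect a real obstacle. The only point needing care is the equality $R\cap U(\Rbar)=U(R)$, which holds because $\Rbar$ is integral over $R$.
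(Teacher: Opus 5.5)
Your proposal is correct and is essentially the paper's argument; the paper writes no proof, since the statement is just the preceding corollary specialized to the reduced ring $T=\Rbar$. The only point left implicit there is the one you supply, namely $S=R\cap U(\Rbar)=U(R)$ (either your norm argument or the integrality argument works), so that $S^{-1}R=R$ and the condition $\Rbar=S^{-1}R$ becomes $\Rbar=R$.
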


\begin{example}
\label{polynomial not associated}
    $\bZ+x\bZ+x^2\bQ[x]$ is not an associated subring of $\bZ+x\bQ[x]$. To see this, note that even though $\bQ$ is a localization of $\bZ$, it is not the localization by $S=\bZ\cap U(\bZ)=\{\pm 1\}$. Thus, condition (2) in Theorem \ref{associated polynomials} is not satisfied. For instance, there is no $u\in U(\bZ+x\bQ[x])=\{\pm 1\}$ which multiplies $\frac{1}{2}x\in \bZ+x\bQ[x]$ into $\bZ+x\bZ+x^2\bQ[x]$.
\end{example}

\begin{example}
\label{polynomial associated}
    Let $\{T_i\}_{i=0}^\infty$ be an increasing sequence of number fields, and for each $i\in\bNo$, let $R_i$ be any order in $T_i$. Since $T_0$ is a field, then $R_0$ is automatically an associated subring of $T_0$. Furthermore, for any $\alpha_i\in T_i$, $\alpha_i=\frac{r_i}{n_i}$ for some $r_i\in R_i$ and $n_i\in \bN$. Since $\bN\subseteq R_0\cap U(T_0)$, then each $T_i$ is the localization of $R_i$ by $S=R_0\cap U(T_0)$. Thus, $R_\bNo[x]$ is an associated subring of $T_\bNo[x]$. 
\end{example}

We now provide a similar characterization of when $R_\bNo[x]$ is a locally associated subring of $T_\bNo[x]$.

\begin{theorem}
    \label{la polynomial}
    Let $\{R_i\}_{i=0}^\infty$ and $\{T_i\}_{i=0}^\infty$ be increasing sequences of commutative rings with identity such that $R_i\subseteq T_i$ for each $i\in\bNo$, $J_i=\bigcap_{k=0}^\infty (R_{i+k}:T_k)$ for each $i\in\bNo$, and $J=\bigcup_{i=0}^\infty J_i$. Also assume that for each $i\in\bN$, $\Nil(T_i)\subseteq R_i$ (in particular, this holds when each $T_i$ is reduced). Then $R_\bNo[x]$ is a locally associated subring of $T_\bNo[x]$ if and only if the following conditions both hold:
    \begin{enumerate}
        \item for every element $t_0\in T_0$ comaximal to $J_0$ (i.e., $t_0T_0+J_0=T_0$), there exists a unit $u\in U(T_0)$ such that $t_0u$ is an element of $R_0$ comaximal to $J_0$ (i.e., $t_0uR_0+J_0=R_0$);
        \item for every $k\in\bN$, $\sqrt{T_k\cap J}\subseteq R_k$.
    \end{enumerate}
\end{theorem}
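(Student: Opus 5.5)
We work with characterization (2) of Definition \ref{locally associated}, using Proposition \ref{conductor ideal} to identify $I=(R_\bNo[x]:T_\bNo[x])=J_\bNo[x]$, and Proposition \ref{unit construction}(1),(3) to describe $U(T_\bNo[x])$, $U(\sfrac{T_\bNo[x]}{I})$ and $U(\sfrac{R_\bNo[x]}{I})$. A coset $f+I$ with $f=t_0+t_1x+\dots+t_nx^n$ is a unit exactly when $t_0+J_0\in U(\sfrac{T_0}{J_0})$ and each $t_k\in\sqrt{T_k\cap J}$ for $k\ge 1$. Likewise $f+I\in U(\sfrac{R_\bNo[x]}{I})$ exactly when $t_0+J_0\in U(\sfrac{R_0}{J_0})$ and $t_k\in\sqrt{R_k\cap J}$. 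Units of $T_\bNo[x]$ are $u_0+\sum b_kx^k$ with $u_0\in U(T_0)$ and $b_k\in\Nil(T_k)\subseteq R_k$.

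\emph{Necessity.} Assume $R_\bNo[x]$ is locally associated in $T_\bNo[x]$. For (1), take $t_0$ comaximal to $J_0$ in $T_0$. Since $J_0\subseteq J_0[x]\cap T_0$, the constant $t_0$ is comaximal to $I$, so characterization (1) gives $t_0=ru$ with $r\in R_\bNo[x]$ comaximal to $I$ and $u\in U(T_\bNo[x])$. Comparing constant terms, $t_0=r_0u_0$ with $u_0\in U(T_0)$, and $r_0$ comaximal to $J_0$ in $R_0$ (constant term of a unit mod $I$). Then $t_0u_0^{-1}=r_0$ works.

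For (2), fix $k\ge1$ and $t\in\sqrt{T_k\cap J}$. Then $1+tx^k$ is a unit mod $I$, so $(1+tx^k)r+\beta\in U(T_\bNo[x])$ for some $r+I\in U(\sfrac{R_\bNo[x]}{I})$ and $\beta\in I$. Write $w=(1+tx^k)r+\beta$. Its constant term $r_0+\beta_0$ lies in $U(T_0)\cap R_0$. Its $x^k$ coefficient $r_k+tr_0+\beta_k$ lies in $\Nil(T_k)\subseteq R_k$, where $\beta_k\in J_k\subseteq R_k$. Hence $tr_0\in R_k$. We also have $r_0+\beta_0=v\in U(T_0)$ and $t\beta_0$; the plan is to show $t\beta_0\in R_k$. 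Since $\beta_0\in J_0\subseteq(R_k:T_k)$, indeed $t\beta_0\in R_k$. Thus $tv\in R_k$. We still need $t$ itself in $R_k$, and $v$ is only a unit of $T_0$, not of $R_0$. This is the main obstacle.

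My first attempt at it is to replace $t$ by $tv^{-1}$ at the start, which is still in $\sqrt{T_k\cap J}$. The worry is circularity, since $v$ depends on the element chosen. Instead I would argue with a fixed element. The set $\sqrt{T_k\cap J}$ is closed under multiplication by $T_0$, so apply the argument to $t'=tv_{t}^{-1}$-type elements iteratively. If that fails, I would use the degree-$k$ relation differently. Pass to the equality $w\cdot w^{-1}=1$, which gives $t(r_0+\beta_0)\equiv -r_k\pmod{R_k}$. Alternatively, choose the test element $t_0^{*}+tx^k$ where $t_0^{*}$ absorbs the unit, using that $U(T_0)$ acts on $\sqrt{T_k\cap J}$. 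I expect this normalisation of the constant unit to be the delicate point.

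\emph{Sufficiency.} Given $f+I$ a unit as above, use (1) to get $u\in U(T_0)$ with $t_0u\in R_0$ comaximal to $J_0$. Then $uf$ has constant term $t_0u$ comaximal to $J_0$ in $R_0$. Its higher coefficients $ut_k$ lie in $\sqrt{T_k\cap J}\subseteq R_k$ by (2), and they are nilpotent modulo $J$. Thus $uf\in R_\bNo[x]$ represents a unit of $\sfrac{R_\bNo[x]}{I}$ by Proposition \ref{unit construction}(3). This gives characterization (1) of Definition \ref{locally associated} with $f=(uf)u^{-1}$.
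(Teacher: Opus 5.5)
Your sufficiency argument and your proof that condition (1) is necessary are correct and essentially the paper's. The gap is in the necessity of condition (2). You correctly reach $tr_0\in R_k$, and then $tv\in R_k$ with $v=r_0+\beta_0$. At that point you call the fact that $v$ is only a unit of $T_0$ ``the main obstacle.'' You then list strategies (rescaling $t$ by $v^{-1}$, iterating, using $ww^{-1}=1$), none of which is carried out, and the first is circular, as you note yourself. As written, the implication ``locally associated $\Rightarrow \sqrt{T_k\cap J}\subseteq R_k$'' is not proved.

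The missing observation is that you never need $r_0$ (or $v$) to be a unit of $R_0$. It suffices that it is a unit \emph{modulo $J_0$} inside $R_0$, and you already have this. Since $r+I\in U(R_{\bNo}[x]/I)$, Proposition \ref{unit construction}(3), which you quoted at the outset, gives $s_0\in R_0$ and $\gamma_0\in J_0$ with $r_0s_0+\gamma_0=1$. Then
$$t=t(r_0s_0+\gamma_0)=(tr_0)s_0+t\gamma_0 .$$
Here $(tr_0)s_0\in R_k$ because $tr_0\in R_k$ and $s_0\in R_0\subseteq R_k$, and $t\gamma_0\in R_k$ because $\gamma_0\in J_0\subseteq (R_k:T_k)$. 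Hence $t\in R_k$. The same works with $v$ in place of $r_0$, since $v\in R_0$ and $v\equiv r_0$ modulo $J_0$.

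This is exactly the paper's step, phrased via characterization (1) instead of (2). There the constant term $u_0$ of the unit multiplier lies in $R_0$ and is comaximal to $J_0$, say $u_0r_0+\beta_0=1$. Then $t_k=(t_ku_0+b_k)r_0-b_kr_0+t_k\beta_0\in R_k$, using $b_k\in\Nil(T_k)\subseteq R_k$.
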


\begin{proof}
    First, assume that $R_\bNo[x]$ is a locally associated subring of $T_\bNo[x]$. Then for any element $t_0\in T_0$ comaximal to $J_0$, Proposition \ref{unit construction} tells us that $t_0$ is also comaximal to $J_\bNo[x]$ as an element of $T_\bNo[x]$. Since $R_\bNo[x]$ is locally associated, there must exist some $u=u_0+b_1x+\dots+b_nx^n\in U(T_\bNo[x])$ such that $t_0u+J_\bNo[x]\in U(\quot{R_\bNo[x]}{J_\bNo[x]})$. In particular, $t_0u_0$ must be an element of $R_0$ comaximal to $J_0$ by Proposition \ref{unit construction}, satisfying Condition (1).

    Now let $t_k\in \sqrt{T_k\cap J}$; that is, $t_k\in T_k$ such that $t_k^n\in J$ for some $n\in\bN$. Then by Proposition \ref{unit construction}, the polynomial $1+t_kx^k$ is a unit modulo $J_0$. Since $R_\bNo[x]$ is locally associated, there must again be some $u=u_0+b_1x+\dots+b_nx^n\in U(T_\bNo[x])$ such that $(1+t_kx^k)u+J_\bNo[x]\in U(\quot{R_\bNo[x]}{J_\bNo[x]})$. In particular, the constant term $u_0$ is an element of $R_0$ comaximal to $J_0$. Then there exist some $r_0\in R_0$ and $\beta_0\in J_0$ such that $u_0r_0+\beta_0=1$. Furthermore, the coefficient of $x^k$, $t_ku_0+b_k$, is an element of $R_k$ which is nilpotent modulo $J$. Thus:
    $$t_k=t_k(u_0r_0+\beta_0)=(t_ku_0+b_k)r_0-b_kr_0+t_k\beta_0.$$
    Since $t_ku_0+b_k$ and $r_0$ are elements of $R_k$, so is their product. Since $\beta_0\in J_0$, then $t_k\beta_0\in R_k$. Finally, since $b_k\in \Nil(T_k)\subseteq R_k$, $b_kr_0\in R_k$. Thus, $t_k\in R_k$, so $\sqrt{T_k\cap J}\subseteq R_k$.

    Now assume that Conditions (1) and (2) hold, and let $f=t_0+t_1x+\dots+t_nx^n$ be an element of $T_\bNo[x]$ comaximal to $J_\bNo[x]$. In particular, Proposition \ref{unit construction} tells us that $t_0+J_0\in U(\quot{T_0}{J_0})$ and $t_i\in \sqrt{T_i\cap J}$ for each $i\in\bN$. By Condition (1), there exists some $u\in U(T_0)$ such that $t_0u$ is an element of $R_0$ comaximal to $J_0$. Then $fu=(t_0u+t_1ux+\dots+t_nux^n)$, with each $t_ku\in \sqrt{T_k\cap J}\subseteq R_k$. Then $t_ku\in \sqrt{R_k\cap J}$; by Proposition \ref{unit construction}, $fu$ is an element of $R_\bNo[x]$ comaximal to $J_\bNo[x]$. Thus, $R_\bNo[x]$ is a locally associated subring of $T_\bNo[x]$.
\end{proof}

\begin{corollary}
    Let $T$ be a commutative ring with identity, $R$ a subring of $T$ with identity, and $J=(R:T)$. If $\Nil(T)\subseteq R$ (in particular, if $T$ is reduced), then $R[x]$ is a locally associated subring of $T[x]$ if and only if $R$ is a locally associated subring of $T$ such that $J$ is a radical ideal of $T$.
\end{corollary}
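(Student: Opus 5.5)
We specialize Theorem \ref{la polynomial} to the constant sequences $R_i=R$ and $T_i=T$ for every $i\in\bNo$. The hypothesis $\Nil(T_i)\subseteq R_i$ is then just $\Nil(T)\subseteq R$. So the plan is to compute $J_i$ and $J$ in this setting and translate Conditions (1) and (2) of Theorem \ref{la polynomial} into the two stated conditions.

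\textbf{Step 1: compute the conductor.} For every $i,k$ we have $(R_{i+k}:T_k)=(R:T)$. Hence $J_i=(R:T)=J$ for all $i$, and also $\bigcup_i J_i=J$. Because the collection is constant, $J$ is an ideal of $T$ as well as of $R$ (as noted in the Remark following Proposition \ref{conductor ideal}).

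\textbf{Step 2: Condition (1).} With $J_0=J$, Condition (1) reads: for every $t\in T$ with $tT+J=T$ there is $u\in U(T)$ such that $tu\in R$ and $tuR+J=R$. This is verbatim characterization (1) of Definition \ref{locally associated}. So Condition (1) is equivalent to $R$ being a locally associated subring of $T$.

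\textbf{Step 3: Condition (2).} Here $T_k\cap J=J$, so Condition (2) becomes $\sqrt{J}\subseteq R$, where the radical is taken in $T$. We show this is equivalent to $J$ being a radical ideal of $T$.

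\emph{Radical implies Condition (2).} If $J$ is radical, then $\sqrt{J}=J\subseteq R$.

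\emph{Condition (2) implies radical.} Suppose $\sqrt{J}\subseteq R$, and let $t\in\sqrt{J}$ (radical in $T$) and $s\in T$. Then $(ts)^n=t^ns^n\in J$, because $J$ is an ideal of $T$. So $ts\in\sqrt{J}\subseteq R$. Since $s\in T$ was arbitrary, $tT\subseteq R$, and $t\in R$ by hypothesis. Therefore $t\in(R:T)=J$. This gives $\sqrt{J}=J$, so $J$ is radical in $T$.

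\textbf{Step 4: combine.} Theorem \ref{la polynomial} now yields the corollary: $R[x]$ is locally associated in $T[x]$ if and only if $R$ is locally associated in $T$ and $J$ is a radical ideal of $T$.

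The only step needing thought is the implication in Step 3 from $\sqrt{J}\subseteq R$ to $J$ radical. It uses that $\sqrt{J}$ is itself an ideal of $T$, which holds because $J$ is an ideal of $T$. Everything else is direct substitution.
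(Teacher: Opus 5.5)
Your proposal is correct and follows the paper's intended route: the paper gives no separate proof and obtains the corollary by specializing Theorem \ref{la polynomial} to the constant sequences $R_i=R$, $T_i=T$. With $J_i=J$, Condition (1) becomes characterization (1) of Definition \ref{locally associated}, and your Step 3 supplies the one detail the paper leaves unstated, namely that $\sqrt{J}\subseteq R$ forces $\sqrt{J}\subseteq (R:T)=J$ because $\sqrt{J}$ is an ideal of $T$.
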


\begin{corollary}
    Let $R$ be an order in a number field and $J=(R:\Rbar)$. Then $R[x]$ is a locally associated subring of $T[x]$ if and only if $R$ is a locally associated subring of $T$ and $J$ is a radical ideal of $T$ (i.e., $J$ factors into a product of distinct prime ideals of $T$).
\end{corollary}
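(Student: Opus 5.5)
This corollary is obtained by specializing the preceding corollary to $T=\Rbar$. (The statement writes $T$ for $\Rbar$.) The plan is to check that hypothesis, translate its two conditions for orders, and then interpret radicality via prime factorization in the Dedekind domain $\Rbar$.

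\emph{Step 1: reduction.} Since $\Rbar$ is the ring of integers of a number field, it is an integral domain and hence reduced. So $\Nil(\Rbar)=0\subseteq R$, and the hypothesis of the preceding corollary (the case $R_i=R$, $T_i=\Rbar$ of Theorem \ref{la polynomial}) holds.

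\emph{Step 2: reading off the conditions.} That corollary gives: $R[x]$ is a locally associated subring of $\Rbar[x]$ if and only if $R$ is locally associated in $\Rbar$ and $J=(R:\Rbar)$ is a radical ideal of $\Rbar$. It is worth confirming how this comes out of Theorem \ref{la polynomial}. With constant sequences, $J_i=\bigcap_k(R:\Rbar)=J$ for every $i$, so condition (1) of the theorem is exactly characterization (1) of Definition \ref{locally associated}. Condition (2) of the theorem reads $\sqrt{J}\subseteq R$, where the radical is taken in $\Rbar$. This forces $\sqrt{J}=J$: the radical $\sqrt J$ is an ideal of $\Rbar$ contained in $R$, and $J$ is by definition the largest such ideal, because any ideal $A$ of $\Rbar$ with $A\subseteq R$ satisfies $A\Rbar=A\subseteq R$ and so $A\subseteq(R:\Rbar)$. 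Conversely, if $J$ is radical then $\sqrt J=J\subseteq R$.

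\emph{Step 3: the parenthetical.} We must show that radicality of $J$ is equivalent to $J$ being a product of distinct primes of $\Rbar$. Since $R$ is an order, $J$ is a nonzero ideal of the Dedekind domain $\Rbar$. (It contains $m\cdot 1$, where $m$ is the index of $R$ in $\Rbar$.) Factor $J=\prod P_j^{e_j}$. Then $\sqrt J=\prod P_j$, since both sides equal $\bigcap P_j$ for pairwise comaximal primes. Unique factorization then gives $J=\sqrt J$ if and only if every $e_j=1$.

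The main point requiring care is Step 2's identification of $\sqrt{J}\subseteq R$ with $J$ being radical, that is, the maximality of the conductor among $\Rbar$-ideals inside $R$. Everything else is direct specialization of results already proved in the paper.
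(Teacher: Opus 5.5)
Your proposal is correct and follows the same route as the paper, which gives no separate proof and treats this corollary as the specialization of Theorem \ref{la polynomial} (via the preceding corollary) to constant sequences with $T=\Rbar$, plus Dedekind factorization for the parenthetical. Your Step 2 fills in a point the paper leaves unstated: since $(R:\Rbar)$ is the largest $\Rbar$-ideal contained in $R$, the condition $\sqrt{J}\subseteq R$ is equivalent to $J=\sqrt{J}$.
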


\begin{example}
\label{polynomial not la}
    Let $\{p_i\}_{i=1}^\infty$ be an enumeration of the prime numbers, and for each $i\in\bN$, define $\alpha_i=\frac{1+\sqrt{p_i}}{2}$ if $p_i\equiv 1\modulo{4}$ and $\alpha_i=\sqrt{p_i}$ otherwise. Define $T_0=\bZ[i]$ and $R_0=\bZ[2i]$, and for each $i\in\bN$, define $T_i=T_{i-1}[\alpha_i]$ and $R_i=T_{i-1}[2\alpha_i]$. Then $T=\bigcup_{i=0}^\infty T_i$ is the smallest domain containing all quadratic rings of integers. For each $i\in\bN_0$, note that $(R_i:T_i)=2T_i$ and $(R_j:T_i)=R_j$ for any $j>i$ (since $T_i\subseteq R_j$). Then $J_0=2T_0=(R_0:T_0)$ and $J_i=R_i$ for all $i\in\bN$. By \cite[Theorem 6.4]{subringrelations}, $R_0$ is a locally associated subring of $T_0$, so Condition (1) in Theorem \ref{la polynomial} holds. However, note that $J=\bigcup_{i=0}^\infty J_i=\bigcup_{i=0}^\infty R_i=T$. Then for any $k\in\bN$, $\sqrt{T_k\cap J}=\sqrt{T_k\cap T}=\sqrt{T_k}=T_k\nsubseteq R_k$. Thus, $R_\bNo[x]$ is not a locally associated subring of $T_\bNo[x]$.
\end{example}

\begin{example}
\label{polynomial la}
    Similarly to the previous example, let $T_0=\bZ[i]$ and $R_0=\bZ[2i]$. Letting $K_0=\bQ[i]$, we inductively define for each $i\in\bN$ a field extension $K_i$ of $K_{i-1}$ such that $2$ remains inert in $T_i$, the ring of algebraic integers in $K_i$ (that is, $2T_i$ is a prime ideal of $T_i$). Finally, we let $R_i=\bZ+2T_i$ for all $i\in\bN$. Note that for each $i,k\in\bN_0$, $(R_{i+k}:T_k)=2T_{i+k}$, a prime ideal of $T_{i+k}$. Then $J_i=2T_i$ for each $i\in\bN_0$ and $J=2T$. As before, we note that since $R_0$ is a locally associated subring of $T_0$ and $J_0=2T_0=(R_0:T_0)$, condition (1) from Theorem \ref{la polynomial} is satisfied. Furthermore, since $T_k\cap J=2T_k$ is a prime ideal for each $k\in\bN$, then $\sqrt{T_k\cap J}=2T_k\subseteq R_k$, satisfying condition (2) as well. Then $R_\bNo[x]$ is a locally associated subring of $T_\bNo[x]$.
\end{example}

\section{(Locally) Associated Subrings in Power Series Extensions}

We now consider the analogous questions for power series extensions: when is $R_\bNo[[x]]$ an associated or locally associated subring of $T_\bNo[[x]]$? Unfortunately, a general characterization of when $R_\bNo[[x]]$ is an associated subring of $T_\bNo[[x]]$ remains elusive. That said, we are able to provide a set of sufficient conditions as well as a set of necessary conditions for $R[[x]]$ to be associated in $T[[x]]$. At the intersection of these conditions will be a full characterization of when $R[[x]]$ is associated in $T[[x]]$ for a certain class of domains. To start, we provide lemmata which generalize some of the results from \cite[Section 4]{subringrelations}.

\begin{lemma}
    \label{conductor of R+J}
    Let $R\subseteq T$ be commutative rings, $I=(R:T)$, and $J$ an ideal of $T$ containing $I$. If $R$ is an ideal-preserving subring of $T$ and $I$ is the intersection of finitely many maximal ideals of $T$, then $R+J$ is a subring of $T$ with $J=(R+J:T)$.
\end{lemma}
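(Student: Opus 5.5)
The subring assertion is routine: since $J$ is an ideal of $T$ and $R$ is a subring, $(r+j)(r'+j')=rr'+(rj'+jr'+jj')\in R+J$, and $R+J$ is closed under subtraction. For the conductor, the inclusion $J\subseteq (R+J:T)$ is immediate, because $J\subseteq R+J$ and $JT\subseteq J$. The real work is the reverse inclusion.

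Set $K:=(R+J:T)$. Then $K\subseteq R+J\subseteq T$, and $K$ is an ideal of $T$: if $k\in K$ and $t\in T$, then $ktT\subseteq kT\subseteq R+J$. Also $I\subseteq J\subseteq K$. Write $I=M_1\cap\dots\cap M_n$ with the $M_i$ maximal ideals of $T$ (we assume $T$ has identity). By the Chinese Remainder Theorem, $T/I\cong\prod T/M_i$, so every ideal of $T$ containing $I$ is an intersection of some of the $M_i$. Thus $J=\bigcap_{i\in A}M_i$ and $K=\bigcap_{i\in B}M_i$ with $B\subseteq A$.

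Suppose, for contradiction, that $K\neq J$, so $B\subsetneq A$. Pick $i_0\in A\setminus B$, put $M:=M_{i_0}$, and let $L:=\bigcap_{i\neq i_0}M_i$. Two elements are needed.
\begin{enumerate}
\item An element $e\in K$ with $e\in L$ but $e\notin M$. Since $K\cap L=\bigcap_{i\in B\cup\{i\neq i_0\}}M_i$ does not involve $M$, CRT supplies such an $e$.
\item An element $s\in R\cap L$ with $s\notin M$. Here we use the hypothesis that $R$ is ideal-preserving, applied to $J_1=L$ and $J_2=M$. We have $L\nsubseteq M$ because the $M_i$ are distinct maximal ideals, so $R\cap L\nsubseteq M$.
\end{enumerate}
If $n=1$ we take $L=T$ and $s=1$.

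Now compute $seT$:
$$seT=s(eT)\subseteq s(R+J)\subseteq R+sJ.$$
Since $s\in L$ and $J\subseteq M$ (because $i_0\in A$), we get $sJ\subseteq L\cap M=I\subseteq R$. Hence $seT\subseteq R$, so $se\in(R:T)=I\subseteq M$. But $s\notin M$, $e\notin M$, and $M$ is prime, which is a contradiction. Therefore $K=J$.

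The main obstacle is the choice of multiplier. One is tempted to multiply $eT\subseteq R+J$ by $e$ again, but $eR$ need not lie in $R$. The element $s$ from the ideal-preserving hypothesis is what fixes this: it lies in $R$, so $sR\subseteq R$, and it simultaneously kills $J$ into $I$.
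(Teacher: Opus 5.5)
Your proof is correct and is essentially the paper's argument. Both write $J$ and $(R+J:T)$ as intersections of subfamilies of the $M_i$. Both then multiply an element of $(R+J:T)$ lying outside $M$ by an element of $R\cap\bigcap_{i\neq i_0}M_i\setminus M$, supplied by ideal-preservation, so that the product lands in $I\subseteq M$ and contradicts primality. The only cosmetic difference is that the paper also uses ideal-preservation to choose its first element inside $R\cap(R+J:T)$, whereas you obtain $e$ directly from the Chinese Remainder Theorem, which suffices.
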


\begin{proof}
    Let $M_1,\dots, M_n$ be the maximal ideals of $T$ such that $I=\bigcap_{i=1}^n M_i$. Without loss of generality, we assume that this list is ordered such that for some $1\leq k\leq n$, $J\subseteq M_i$ for each $1\leq i\leq k$ and $J\nsubseteq M_j$ for each $k<j\leq n$. Since $J$ is an ideal containing $I$, then note that $\quot{J}{I}$ is an ideal of $\quot{T}{I}\cong \quot{T}{M_1}\times\dots\times\quot{T}{M_n}$, a direct product of finitely many fields. Then letting $\pi_i:\quot{T}{I}\to\quot{T}{M_i}$ be the natural projection, $\pi_i(\quot{J}{I})$ must be an ideal of the field $\quot{T}{M_i}$ for each $1\leq i\leq n$. Since the only ideals of a field are the zero ideal and the entire field, then for each $1\leq i\leq n$, $\pi_i(\quot{J}{I})$ is either the zero ideal of $\quot{T}{M_i}$ (in which case $J\subseteq M_i$) or the entire field $\quot{T}{M_i}$ (in which case $J\nsubseteq M_i$). Thus, $\quot{J}{I}\cong \pi_1(\quot{J}{I})\times\dots\times \pi_n(\quot{J}{I})\cong \quot{T}{M_{k+1}}\times\dots\times\quot{T}{M_n}$. Then by the Third Isomorphism Theorem,
    $$\quot{T}{J}\cong\quot{(\sfrac{T}{I})}{(\sfrac{T}{J})}\cong \quot{T}{M_1}\times\dots\times \quot{T}{M_k},$$ so $J=\bigcap_{i=1}^k M_i$.

    Let $A=(R+J:T)$. Since $J\subseteq R+J$, then $J\subseteq A$; notably, this means that $A$ is also an intersection of finitely many maximal ideals (specifically, a subset of the maximal ideals containing $J$). Assume toward a contradiction that $J\subsetneq A$. Then there is some maximal ideal containing $J$ which does not contain $A$; assume without loss of generality that this maximal ideal is $M_1$. Since $R$ is an ideal-preserving subring of $T$, we can therefore select $\alpha\in R\cap A\backslash M_1$ and $\beta\in R\cap (\bigcap_{i=2}^{n}M_i)\backslash M_1$. Note that since $\alpha\in A=(R+J:T)$, then $\alpha T\subseteq R+J$. Thus, for any $t\in T$, $\alpha t=r+j$ for some $r\in R$ and $j\in J$. Then $r\beta\in R$ and $j\beta\in \bigcap_{i=1}^nM_i=I$, so $(\alpha\beta)t=r\beta+j\beta\in R$. Thus $\alpha\beta T\subseteq R$, so $\alpha\beta\in I$. However, note that $\alpha\beta\notin M_1$, a contradiction. Then $J=(R+J:T)$.
\end{proof}

\begin{lemma}
    \label{intersection of intermediate subrings}
    Let $R\subseteq T$ be commutative rings and $I=(R:T)$. Also assume that $R$ is an ideal-preserving subring of $T$, that $I=\bigcap_{i=1}^n M_i$ for some maximal ideals $M_i$ of $T$, and that $R\cap M_1,\dots,R\cap M_n$ are pairwise comaximal ideals of $R$. If $J_1$ and $J_2$ are comaximal ideals of $T$ containing $I$, then $(R+J_1)\cap (R+J_2)=R+J_1\cap J_2$.
\end{lemma}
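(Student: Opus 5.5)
The inclusion $R+J_1\cap J_2\subseteq (R+J_1)\cap(R+J_2)$ is immediate, so all the work is in the reverse inclusion. Let $t\in(R+J_1)\cap(R+J_2)$ and write
$$t=r_1+j_1=r_2+j_2,\qquad r_k\in R,\ j_k\in J_k .$$
The aim is to produce a single $r\in R$ with $t-r\in J_1\cap J_2$. Since $J_1$ and $J_2$ are comaximal, this amounts to finding $r\in R$ with
$$r\equiv r_1 \pmod{J_1},\qquad r\equiv r_2\pmod{J_2}.$$
This is a Chinese-remainder problem, but it has to be solved \emph{inside $R$}.

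First I would describe $J_1$ and $J_2$ concretely. Each contains $I$, so the first paragraph of the proof of Lemma \ref{conductor of R+J} shows that $J_1=\bigcap_{i\in A_1}M_i$ and $J_2=\bigcap_{i\in A_2}M_i$ for subsets $A_1,A_2\subseteq\{1,\dots,n\}$. Comaximality forces $A_1\cap A_2=\emptyset$: a common index $i$ would give $J_1+J_2\subseteq M_i$.

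Next I would find an element $e\in R$ with
$$e\equiv 1 \pmod{M_i}\ (i\in A_1),\qquad e\equiv 0\pmod{M_i}\ (i\in A_2).$$
To do this, set $\mathfrak a=\bigcap_{i\in A_1}(R\cap M_i)=R\cap J_1$ and $\mathfrak b=\bigcap_{i\in A_2}(R\cap M_i)=R\cap J_2$. These are ideals of $R$, and they are comaximal in $R$. The reason is that the $R\cap M_i$ are pairwise comaximal by hypothesis, and a finite intersection of ideals each comaximal to every member of a disjoint family is comaximal to the intersection of that family (standard: multiply the relations $1=a+b$). If $A_1$ or $A_2$ is empty the corresponding ideal is $R$, and the claim is trivial. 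So we may write $1=a+b$ with $a\in R\cap J_1$ and $b\in R\cap J_2$, and take $e=b$. Then $e\in J_2$, and $1-e=a\in J_1$.

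Now put $r=r_1 e+r_2(1-e)\in R$. We check the two congruences:
$$t-r=(t-r_1)e+(t-r_2)(1-e)=j_1e+j_2(1-e).$$
Modulo $J_1$ this is $\equiv j_2(1-e)=j_2a\in J_1$, since $a\in J_1$. Modulo $J_2$ it is $\equiv j_1 e=j_1 b\in J_2$, since $b\in J_2$. Hence $t-r\in J_1\cap J_2$, and therefore $t\in R+J_1\cap J_2$.

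The hypothesis that $R$ is ideal-preserving seems unnecessary for this argument; it is only relevant to the shape of $J_1$ and $J_2$, and even that only needs $J_k\supseteq I$. The main obstacle is exactly the comaximality of $R\cap J_1$ and $R\cap J_2$ \emph{in $R$} rather than in $T$. This is where the pairwise comaximality of the $R\cap M_i$ is indispensable: comaximality in $T$ alone would give $1=a+b$ with $a,b\in T$, and then $r$ need not lie in $R$. I would verify that step carefully, by induction on $|A_1|+|A_2|$.
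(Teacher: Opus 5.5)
Your proof is correct. Its core is the same as the paper's. The surjectivity of the paper's map $\phi\colon R/I\to (R+J_1)/J_1\times (R+J_2)/J_2$, $r+I\mapsto (r+J_1,r+J_2)$, is exactly your statement $R\cap J_1+R\cap J_2=R$. Both come from the pairwise comaximality of the $R\cap M_i$ in $R$.

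The difference is in how the conclusion is reached. The paper first replaces $R$ by $R+J_1\cap J_2$. It appeals to Lemma \ref{conductor of R+J} to know that this ring has conductor $J_1\cap J_2$ (and that $R+J_k$ has conductor $J_k$). That way it only has to prove $(R+J_1)\cap(R+J_2)=R$ when $J_1\cap J_2=I$, where $\alpha-r\in I\subseteq R$ finishes. You skip this reduction, since $t-r\in J_1\cap J_2$ is already the desired conclusion. You also write the Chinese Remainder solution $r=r_1b+r_2a$ explicitly from the splitting $1=a+b$ in $R$, instead of extracting it from a product decomposition of $R/I$.

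Because the appeals to Lemma \ref{conductor of R+J} are the only places the paper uses that $R$ is ideal-preserving, your argument shows that hypothesis is superfluous for this lemma. So is the fact that $I$ is the conductor. All that is used is that $J_1,J_2\supseteq M_1\cap\cdots\cap M_n$, together with pairwise comaximality of the $R\cap M_i$ in $R$. The ideal-preserving hypothesis is still needed in Theorem \ref{associated ps sufficient}, but for other reasons, for instance to get $(R+M_i:T)=M_i$.
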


\begin{proof}
    Since the $R\cap M_i$'s are pairwise comaximal as ideals of $R$, it follows immediately from the Chinese Remainder Theorem that
    $$\quot{R}{I}\cong \quot{R}{R\cap M_1}\times\dots\times \quot{R}{R\cap M_n}.$$
    The Second Isomorphism Theorem then tells us that
    \begin{equation}\quot{R}{I}\cong\quot{R}{R\cap M_1}\times\dots\times \quot{R}{R\cap M_n}\cong \quot{(R+M_1)}{M_1}\times\dots\times \quot{(R+M_n)}{M_n}.\end{equation}
    Then let $J_1$ and $J_2$ be comaximal ideals of $T$ containing $I$. By Lemma \ref{conductor of R+J}, $R+J_1\cap J_2$ is an ideal-preserving subring whose conductor ideal $(R+J_1\cap J_2:T)=J_1\cap J_2$ is an intersection of finitely many maximal ideals of $T$. Moreover, since the $M_i$'s remain comaximal when restricted to $R$, they will certainly remain comaximal when restricted to $R+J_1\cap J_2$. Then it will suffice to show that when $J_1\cap J_2=I$, $(R+J_1)\cap (R+J_2)=R$.
    
    As shown in the proof of the previous lemma, $J_1$ and $J_2$ are both intersections of finitely many of the maximal ideals containing $I$. Moreover, since $J_1$ and $J_2$ are comaximal, none of the maximal ideals containing $J_1$ also contain $J_2$ (and vice versa). Then after possibly rearranging the $M_i$'s, there exists $1\leq r\leq n$ such that $J_1=\bigcap_{i=1}^r M_i$ and $J_2=\bigcap_{i=r+1}^n M_i$. Applying the isomorphisms in (1) to each $\quot{(R+J_i)}{J_i}$ (noting that $R+J_i$, by the previous lemma, has conductor ideal $J_i$), we have:
    $$\quot{R}{I}\cong \quot{(R+J_1)}{J_1}\times \quot{(R+J_2)}{J_2}$$
    under the isomorphism $\phi:\quot{R}{I}\to \quot{(R+J_1)}{J_1}\times \quot{(R+J_2)}{J_2}$ defined by $\phi(r+I)=(r+J_1,r+J_2)$.

    Now note that $R\subseteq (R+J_1)\cap (R+J_2)$ trivially. For the reverse inclusion, let $\alpha\in (R+J_1)\cap (R+J_2)$. Then there is some $r\in R$ such that $\phi(r+I)=(r+J_1,r+J_2)=(\alpha+J_1,\alpha+J_2)$. Then $\alpha-r\in J_1\cap J_2=I$, so $\alpha=r+(\alpha-r)\in R$. Therefore, $R=(R+J_1)\cap (R+J_2)$.
\end{proof}

\begin{lemma}
    \label{la in towers}
    Let $R\subseteq T$ be commutative rings and $I=(R:T)$. Also suppose that $R$ is an ideal-preserving and locally associated subring of $T$ and that $I$ is the intersection of finitely many maximal ideals of $T$. Then for any ideal $J$ of $T$ containing $I$, $R+J$ is a locally associated subring of $T$.
\end{lemma}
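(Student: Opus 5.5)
By Lemma \ref{conductor of R+J}, $R+J$ is a subring of $T$ and its conductor is exactly $(R+J:T)=J$. So, using characterization (2) of Definition \ref{locally associated}, the goal is the following: for any $t\in T$ with $t+J\in U(\sfrac{T}{J})$, produce $r\in R+J$ with $r+J\in U(\sfrac{(R+J)}{J})$ and $\beta\in J$ such that $tr+\beta\in U(T)$.

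The plan is to reduce the problem modulo $I$ using the local associativity of $R$. As in the proof of Lemma \ref{conductor of R+J}, write $I=\bigcap_{i=1}^n M_i$ and order the $M_i$ so that $J=\bigcap_{i=1}^k M_i$. Then
$$\quot{T}{I}\cong\quot{T}{M_1}\times\dots\times\quot{T}{M_n}.$$
Given $t$ a unit modulo $J$, we have $t\notin M_i$ for $i\le k$. We lift $t$ to some $t'\in T$ with $t'\equiv t$ modulo $J$ and $t'$ a unit modulo $I$. By the Chinese Remainder Theorem, choose $t'$ with $t'\equiv t \bmod M_i$ for $i\le k$ and $t'\equiv 1\bmod M_i$ for $i>k$. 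Then $t'+I\in U(\sfrac{T}{I})$ and $t'-t\in J$.

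Because $R$ is locally associated in $T$, there exist $r\in R$ with $r+I\in U(\sfrac{R}{I})$ and $\beta'\in I$ such that $t'r+\beta'\in U(T)$. Now set $\beta:=\beta'+(t'-t)r$. Since $t'-t\in J$ and $J$ is an ideal of $T$, we get $(t'-t)r\in J$. Also $\beta'\in I\subseteq J$, so $\beta\in J$, and
$$tr+\beta=t'r+\beta'\in U(T).$$
It remains to check that $r+J$ is a unit in $\sfrac{(R+J)}{J}$. From $r+I\in U(\sfrac{R}{I})$ there are $s\in R$ and $\gamma\in I\subseteq J$ with $rs+\gamma=1$. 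Here $s\in R+J$, so $r$ is invertible modulo $J$ inside $R+J$. This gives characterization (2), and hence $R+J$ is locally associated in $T$.

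The main step to watch is the lifting of $t$ to a unit modulo $I$. This works because $T/I$ is a finite product of fields and $T/J$ is the product of a subset of those factors. That structure is exactly what the finite-intersection-of-maximal-ideals hypothesis provides, and the argument of Lemma \ref{conductor of R+J} already shows that $J$ is such a sub-intersection. The ideal-preserving hypothesis is used only through Lemma \ref{conductor of R+J}, which identifies the conductor of $R+J$ as $J$. Without that identification, the equivalent conditions of Definition \ref{locally associated} would refer to a different ideal.
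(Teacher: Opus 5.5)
Your proof is correct and follows essentially the same route as the paper. Both arguments identify $(R+J:T)=J$ via Lemma \ref{conductor of R+J}, use the Chinese Remainder Theorem to replace $t$ by an element congruent to $t$ modulo $M_1,\dots,M_k$ and to $1$ modulo the remaining $M_i$, apply local associativity of $R$, and absorb the difference into $J$; the only difference is that you verify characterization (2) of Definition \ref{locally associated} directly, while the paper verifies characterization (1) by showing that $ut\in R+J$ is comaximal to $J$ in $R+J$.
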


\begin{proof}
    Let $I=\bigcap_{i=1}^n M_i$. Without loss of generality, assume we have ordered the $M_i$'s as in the proof of Lemma \ref{conductor of R+J} so that for some $1\leq k\leq n$, $J=\bigcap_{i=1}^k M_i$. Let $t\in T$ be comaximal to $J=(R+J:T)$ (i.e., $t\notin M_i$ for $1\leq i\leq k$). By the Chinese Remainder Theorem, we can now select $\beta\in T$ such that $\beta\equiv t\modulo{M_i}$ for $1\leq i\leq k$ and $\beta\equiv 1\modulo{M_j}$ for $k<j\leq n$. Note that $\beta\notin M_i$ for each $1\leq i\leq n$ and is thus comaximal to $I$. Since $R$ is a locally associated subring of $T$, there exists some $u\in U(T)$ such that $u\beta$ is an element of $R$ comaximal to $I$. Note that $ut=u(\beta+(t-\beta))=u\beta+u(t-\beta)$, with $t-\beta\in \bigcap_{i=1}^k M_i=J$. Then since $u\beta\in R$ and $u(t-\beta)\in J$, $ut\in R+J$.
    
    Since $u\beta$ is comaximal to $I$ as an element of $R$, then we can select $r\in R$ and $\gamma\in I$ such that $u\beta r+\gamma=1$. Therefore:
    $$1=u\beta r+\gamma=u(t+(\beta-t))r+\gamma=utr+[(\beta-t)ur+\gamma].$$
    Since $\beta\equiv t\modulo{M_i}$ for each $1\leq i\leq k$, then $\beta-t\in \bigcap_{i=1}^k M_i=J$. Then $r\in R\subseteq R+J$ and $(\beta-t)ur+\gamma\in J$, so $ut$ is an element of $R+J$ which is comaximal to $J=(R+J:T)$ as an element of $R+J$. Therefore, $R+J$ is a locally associated subring of $T$.
\end{proof}

\begin{theorem}
    \label{associated ps sufficient}
    Let $R\subseteq T$ be commutative rings and $I=(R:T)$. Also suppose that the following conditions hold:
    \begin{enumerate}
        \item $R$ is both an associated subring and a locally associated subring of $T$ (by \cite[Theorem 2.11]{subringrelations}, if $U(\sfrac{R}{I})=\quot{R}{I}\cap U(\sfrac{T}{I})$, then this condition is equivalent to $R$ just being associated);
        \item $I$ is the intersection of finitely many maximal ideals $M_1,\dots,M_n$ of $T$;
        \item The ideals $R\cap M_1,\dots,R\cap M_n$ are pairwise comaximal as ideals of $R$.
    \end{enumerate}
    Then $R[[x]]$ is an associated subring of $T[[x]]$.
\end{theorem}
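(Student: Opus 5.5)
The plan is to construct, for a given $f=\sum_{k\ge 0}t_kx^k\in T[[x]]$, a unit $u=\sum_{j\ge0}u_jx^j\in U(T[[x]])$ coefficient by coefficient. By Proposition \ref{unit construction}, a power series is a unit exactly when $u_0\in U(T)$. The coefficients of $uf$ are $c_k=\sum_{j=0}^k u_jt_{k-j}$, and the goal is $c_k\in R$ for all $k$.

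The basic observation is that $I=(R:T)$ is an ideal of $T$ contained in $R$. So any summand $u_jt_{k-j}$ with $t_{k-j}\in I$ already lies in $R$, and more generally only the classes of the $t_i$ modulo $I$ matter. By hypothesis (2) and the Chinese Remainder Theorem, $T/I\cong T/M_1\times\dots\times T/M_n$ is a finite product of fields. So the problem splits naturally into one problem per maximal ideal $M_i$, and the pieces will be glued back together with Lemmas \ref{conductor of R+J}, \ref{intersection of intermediate subrings} and \ref{la in towers}.

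\emph{First step: the case $n=1$, i.e.\ $I=M$ maximal.} If every $t_k\in M$, then $f\in I[[x]]\subseteq R[[x]]$ and $u=1$ works. Otherwise let $m$ be minimal with $t_m\notin M$, so $t_m$ is comaximal to $I$. Since $t_0,\dots,t_{m-1}\in I$, those coefficients never obstruct membership in $R$. By local associatedness there is $u_0\in U(T)$ with $a:=u_0t_m\in R$ comaximal to $I$. In particular $aT+R\supseteq aT+I=T$, so $aT+R=T$.

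Now choose $u_1,u_2,\dots$ inductively. The coefficient $c_{k+m}$ equals $u_ka\,u_0^{-1}$ plus terms involving $u_0,\dots,u_{k-1}$ (which are already fixed) plus terms lying in $I$. Write $u_k=u_0v_k$; then we need $v_ka\in -(\text{known element of }T)+R$, which is solvable because $aT+R=T$. The coefficients $c_0,\dots,c_{m-1}$ lie in $I\subseteq R$ automatically.

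The associated hypothesis is not needed here, since the constant term of $uf$ lies in $I$ when $m>0$. When $m=0$ it is handled by local associatedness, and by associatedness when $t_0\in I$ trivially.

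\emph{Second step: general $n$.} For each $i$, apply the previous argument to the extension $R_i:=R+\bigcap_{j\neq i}M_j\subseteq T$. By Lemma \ref{conductor of R+J} it has conductor $\bigcap_{j\ne i}M_j$, and it is locally associated by Lemma \ref{la in towers}. Iterating Lemma \ref{intersection of intermediate subrings} (using hypothesis (3)) gives $R=\bigcap_i \big(R+M_i\big)$, and the analogous statement holds for power series coefficientwise. Dually, it suffices to produce a single unit $u$ such that, for each $i$, the coefficients of $uf$ lie in $R+M_i$.

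Working modulo $M_i$, the first step (applied to the conductor $M_i$ of $R+M_i$) yields a series $u^{(i)}$ with $u^{(i)}f\in (R+M_i)[[x]]$, whose constant term is a unit modulo $M_i$. We then glue: pick $e_i\in T$ with $e_i\equiv\delta_{ij}\pmod{M_j}$ and set $w=\sum_i e_iu^{(i)}$. Each coefficient of $wf$ is congruent modulo $M_i$ to the corresponding coefficient of $u^{(i)}f$. Since $R+M_i$ is a union of cosets of $M_i$, this gives $wf\in(R+M_i)[[x]]$ for every $i$, hence $wf\in R[[x]]$.

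\emph{Main obstacle.} The difficulty is that $w_0=\sum_i e_iu^{(i)}_0$ is only comaximal to $I$, not necessarily a unit of $T$. I would first try to fix this with local associatedness of $R\subseteq T$: write $w_0=rv$ with $v\in U(T)$ and $r\in R$ comaximal to $I$.

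The hope is to replace the choices $u^{(i)}_0$ at the outset by $v$ times unit-modulo-$M_i$ elements of $R$. One would choose the constant terms first, glue them, convert to a genuine unit via local associatedness, and only then run the inductive solving for $u_1,u_2,\dots$ in each component. Because $r$ is a unit modulo $I$ and lies in $R$, multiplying the constraints by $r$ or $r^{-1}$ modulo $I$ preserves membership in each $R+M_i$. The components with $t_0\in M_i$ need the associated hypothesis to arrange $u_0t_0\in R$ simultaneously. Reconciling this with the local-associated choice on the other components (again via CRT and Lemma \ref{intersection of intermediate subrings}) is where I expect the real work to lie.
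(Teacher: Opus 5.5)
Your maximal-conductor step is correct: you shift to the first coefficient $t_m\notin M$ and solve $v_ka\in -K+R$ using $aT+R=T$. Your CRT gluing $w=\sum_i e_iu^{(i)}$ with $wf\in\bigcap_i (R+M_i)[[x]]=R[[x]]$ is the same gluing the paper uses. The gap is the last step, which you identify but do not carry out. Since $w_0$ is only comaximal to $I$, $w$ need not lie in $U(T[[x]])$. Your plan to re-choose the constant terms componentwise and then ``reconcile'' them is left unresolved. As it stands you have not exhibited a unit $u$ with $uf\in R[[x]]$, so the proof is incomplete.

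The missing observation is that no re-engineering is needed. You may correct $w$ by multiplying by an element of $R$ and adding an element of $I$, and both operations preserve $wf\in R[[x]]$. Apply characterization (2) of Definition~\ref{locally associated} to $w_0+I\in U(T/I)$: there are $r\in R$ and $\beta\in I$ with $w_0r+\beta\in U(T)$. This also follows from your own factorization $w_0=r'v$: take $s\in R$ with $1-sr'\in I$, and then $sw_0+v(1-sr')=v$. Set $u:=rw+\beta$. Its constant term is a unit, so $u\in U(T[[x]])$, and $uf=r(wf)+\beta f\in R[[x]]$ because $\beta f\in I[[x]]\subseteq R[[x]]$. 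This is exactly how the paper concludes.

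Two smaller points. First, the associated hypothesis is not needed for the components with $t_0\in M_i$, since your shift argument already handles them. Its essential role is to make $R$ ideal-preserving: if $t\in J_1\setminus J_2$ and $tu\in R$ with $u\in U(T)$, then $tu\in (R\cap J_1)\setminus J_2$. Ideal-preservation is a hypothesis of Lemmas~\ref{conductor of R+J}, \ref{intersection of intermediate subrings} and \ref{la in towers}, which you invoke. Second, your sentence applying the first step to $R+\bigcap_{j\neq i}M_j$ is not valid, because that conductor is not maximal; only the rings $R+M_i$ are needed.
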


\begin{proof}
    First, assume that $I$ is itself a maximal ideal of $T$ and consider $f=t_0+t_1x+t_2x^2+\cdots\in T[[x]]$. To show that $R[[x]]$ is an associated subring of $T[[x]]$, we will show that there exist $u=u_0+b_1x+b_2x^2+\dots\in U(T[[x]])$ and $r=r_0+r_1x+r_2x^2+\cdots \in R[[x]]$ such that $f=ru$. If $f\in I[[x]]$, then we can select $u=1$ and $r=f$. Then assume that $f\notin I[[x]]$; for now, also assume that $t_0\notin I$. Since $I$ is maximal in $T$, this means that $t_0T+I=T$.

    Since $R$ is an associated subring of $T$, there must exist $u_0\in U(T)$ and $r_0\in R$ such that $t_0=r_0u_0$. Since $u_0\in U(T)$, note that $r_0T+I=t_0T+I=T$. Now assume that for some $n\in\bN$, we have selected $r_0,r_1,\dots,r_{n-1}$ and $u_0,b_1,\dots,b_{n-1}$ such that for each $k<n$, $$t_k=u_0r_k+b_1r_{k-1}+\dots+b_kr_0.$$
    We simply need to show that we can now select $r_n\in R$ and $b_n\in T$ such that $$t_n=u_0r_n+b_1r_{n-1}+\dots+b_nr_0;$$
    in other words, such that
    $$u_0r_n+b_nr_0=t_n-(b_1r_{n-1}+\dots+b_{n-1}r_1).$$
    Since we have previously selected all $b_k$'s and $r_k$'s for $k<n$, note that the right-hand side of this equation is simply a fixed element of $T$. Then since $u_0R+r_0T\supseteq u_0I+r_0T\supseteq r_0T+I=T$, we can select $r_n\in R$ and $b_n\in T$ which satisfy the above identity. By induction, this selection of $r_n$ and $b_n$ is possible for every $n\in\bN$, and thus we can construct $u=u_0+b_1x+b_2x^2+\dots\in U(T[[x]])$ and $r=r_0+r_1x+r_2x^2+\dots\in R[[x]]$ such that $f=ru$.

    Still assuming that $I$ is maximal and that $f\notin I[[x]]$, we now drop the assumption that $t_0\notin I$. Since $f\notin I[[x]]$, then we can let $n\in\bN$ be minimal such that $t_n\notin I$ (i.e., $t_n\notin I$, but $t_k\in I$ for all $k<n$). Then $f=(t_0+t_1x+\dots+t_{n-1}x^{n-1})+x^n(t_n+t_{n+1}x+\dots)=a+bx^n$, with $a\in I[x]$ and $b$ having constant term $t_n\notin I$. By the previous case, there must exist some $r\in R[[x]]$ and $u\in U(T[[x]])$ such that $b=ru$. Then $f=(au^{-1}+rx^n)u$, where $au^{-1}+rx^n\in R[[x]]$. Then $R$ is an associated subring of $T$.

    We now drop the assumption that $I$ is maximal and let $f=t_0+t_1x+t_2x^2+\dots\in T[[x]]$ be arbitrary. By Lemma \ref{conductor of R+J}, note that each $R+M_i$ is a subring of $T$ whose conductor ideal $(R+M_i:T)=M_i$ is a maximal ideal of $T$; moreover, Lemma \ref{la in towers} tells us that $R+M_i$ is a locally associated subring of $T$. Then by the previous case, for each $1\leq i\leq n$, there exists $g_i=u_i+b_{i1}x+b_{i2}x^2+\dots \in U(T[[x]])$ such that $fg_i\in (R+M_i)[[x]]$. Now by the Chinese Remainder Theorem, there must exist $b_0\in T$ such that $b_0\equiv u_i\modulo{M_i}$ for each $1\leq i\leq n$. Moreover, for each $j\in \bN$, there must exist $b_j\in T$ such that $b_j\equiv b_{ij}\modulo{M_i}$ for each $1\leq i\leq n$. Let $g=b_0+b_1x+b_2x^2+\dots \in T[[x]]$ and note that, by construction, $fg\equiv fg_i\modulo{M_i[[x]]}$ for each $1\leq i\leq n$. Then $fg\in \bigcap_{i=1}^n(R+M_i)[[x]]$, which by Lemma \ref{intersection of intermediate subrings} gives that $fg\in R[[x]]$.

    Finally, note that since $b_0$ is congruent to a unit modulo each $M_i$, $b_0$ is comaximal to $I$. Since $R$ is a locally associated subring of $T$, this means that there exists some $r\in R$ comaximal to $I$ (as an element of $R$) and $\beta\in I$ such that $b_0r+\beta=u_0\in U(T)$. Then let $u=gr+\beta=u+rb_1x+rb_2x^2+\dots\in U(T[[x]])$. Therefore, $fu=f(gr+\beta)=(fg)r+f\beta\in R[[x]]$, so $R[[x]]$ is an associated subring of $T[[x]]$.
\end{proof}

This gives a set of conditions which are sufficient to conclude that $R[[x]]$ is an associated subring of $T[[x]]$. Following a short lemma, we now provide a related (but not equivalent) set of necessary conditions.

\begin{lemma}
    \label{associate not in R}
    Let $T$ be a commutative ring with unity, $R$ an associated subring of $T$, and $I=(R:T)$. If $r\in R$ satisfies $r\cdot U(T)\subseteq R$, then $r\in I$. Equivalently, if $r\notin I$, then there exists $u\in U(T)$ such that $ur\notin R$.
\end{lemma}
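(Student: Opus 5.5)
We argue directly from the definition of the conductor. Suppose $r\in R$ satisfies $r\cdot U(T)\subseteq R$; we will show $rT\subseteq R$, so that $r\in(R:T)=I$. The equivalent formulation is simply the contrapositive: if $r\notin I$, then $r\cdot U(T)\nsubseteq R$, i.e., some $u\in U(T)$ has $ur\notin R$.

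Fix an arbitrary $t\in T$. Because $R$ is an associated subring of $T$, condition (1) of Definition \ref{associated} gives $s\in R$ and $u\in U(T)$ with $t=su$. Then
$$rt=rsu=s(ru).$$
By hypothesis $ru\in R$, and $s\in R$, so $rt=s(ru)\in R$, since $R$ is closed under multiplication. As $t$ was arbitrary, $rT\subseteq R$, and together with $r\in R$ this gives $r\in I$.

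No real obstacle arises here. The only point to watch is that the conductor $(R:T)$ is defined as a subset of $R$, and this is satisfied because $r\in R$ by assumption. Commutativity of $T$ is used only to regroup $rsu$ as $s(ru)$.
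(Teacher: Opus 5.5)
Your proposal is correct and is essentially the paper's proof. The only difference is that you use condition (1) of Definition \ref{associated}, writing $t=su$ and $rt=s(ru)$, while the paper uses condition (2), taking $u$ with $tu\in R$ and writing $rt=(ru^{-1})(tu)$.
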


\begin{proof}
    Let $r\in R$ such that $r\cdot U(T)\subseteq R$, and let $t\in T$. Since $R$ is an associated subring of $T$, there exists $u\in U(T)$ such that $tu\in R$. Then $rt=(ru^{-1})(tu)\in R$, so $r\in (R:T)=I$.
\end{proof}

\begin{theorem}
    \label{associated ps necessary}
    Let $R\subsetneq T$ be commutative rings with unity such that $U(\sfrac{R}{I})= \quot{R}{I}\cap U(\sfrac{T}{I})$ (in particular, this holds when $T$ is integral over $R$). If $R[[x]]$ is an associated subring of $T[[x]]$, then $R$ is an associated subring of $T$ and $I=(R:T)$ is a radical ideal of $T$.
\end{theorem}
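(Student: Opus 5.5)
We prove the two conclusions separately. The first is a constant-term argument. For the second, we reduce to elements whose square lies in $I$ and then run a coefficient comparison on the series $t+x$.

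\emph{$R$ is associated in $T$.} Let $t\in T$ and view $t$ as a constant series in $T[[x]]$. Since $R[[x]]$ is associated in $T[[x]]$, there is $u=u_0+u_1x+\cdots\in U(T[[x]])$ with $tu\in R[[x]]$. By Proposition \ref{unit construction}(2) (with all $T_i=T$), $u_0\in U(T)$. The constant term of $tu$ is $tu_0$, so $tu_0\in R$. Hence $R$ is associated in $T$.

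\emph{Reduction for radicality.} It suffices to show that $t\in T$ and $t^2\in I$ together imply $t\in I$. Indeed, suppose $t^n\in I$ with $n\geq 2$ minimal. Put $s=t^{n-1}$. Then $s^2=t^{2n-2}\in I$, since $2n-2\geq n$. The claim then gives $s\in I$, contradicting the minimality of $n$. So $n=1$, that is, $t\in I$.

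\emph{Main step: $t^2\in I$ implies $t\in R$.} Take $f=t+x\in T[[x]]$ and choose $u=u_0+u_1x+u_2x^2+\cdots\in U(T[[x]])$ with $fu\in R[[x]]$, so $u_0\in U(T)$. Comparing coefficients gives
$$r_0:=tu_0\in R \quad\text{and}\quad r_k:=tu_k+u_{k-1}\in R \ \text{ for } k\geq 1.$$
Solving for $u_0$ twice gives
$$u_0=r_1-tr_2+t^2u_2.$$

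\begin{itemize}
\item First consequence: multiplying by $t$ and using $t^2\in I$ gives $tr_1\equiv tu_0=r_0\pmod I$. Hence $tr_1\in R$, because $I\subseteq R$.
\item Second consequence: modulo $I$, $r_1\equiv u_0+tr_2$. This is a unit of $T/I$ plus an element whose square is zero in $T/I$, so $r_1+I\in \quot{R}{I}\cap U(\sfrac{T}{I})$.
\item By the hypothesis $U(\sfrac{R}{I})=\quot{R}{I}\cap U(\sfrac{T}{I})$, there is $s\in R$ with $r_1s\equiv 1\pmod I$.
\item Therefore $t=t(r_1s)+t(1-r_1s)$, where $t(r_1s)=(tr_1)s\in R$ and $t(1-r_1s)\in tI\subseteq I\subseteq R$. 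So $t\in R$.
\end{itemize}
This step is the main obstacle. The delicate points are choosing $f=t+x$ and using the unit-lifting hypothesis to invert $r_1$ modulo $I$.

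\emph{Conclusion.} Let $t\in T$ with $t^2\in I$, and let $a\in T$. Then $(at)^2=a^2t^2\in I$, so the main step gives $at\in R$. Thus $tT\subseteq R$. Since $t\in R$ as well, $t\in(R:T)=I$. Combined with the reduction above, $I$ is a radical ideal of $T$.
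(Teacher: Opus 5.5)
Your proof is correct, and its core is the same computation the paper uses. Both arguments test the series $t+x$ and note that the linear coefficient $r_1=tu_1+u_0$ lies in $R$ and is a unit modulo $I$, being a unit plus an element whose square lies in $I$. Both then invert $r_1$ modulo $I$ using the hypothesis $U(R/I)=(R/I)\cap U(T/I)$ and conclude $t\in R$. Your detour through $u_2$ and $r_2$ is unnecessary. Directly, $tr_1=r_0+t^2u_1\in R$, and $r_1=u_0+tu_1$ with $(tu_1)^2\in I$; this is exactly how the paper argues.

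Where you differ is the endgame. The paper argues by contradiction. It picks $\alpha\notin I$ with $\alpha^2\in I$ and invokes Lemma \ref{associate not in R} to obtain $v\in U(T)$ with $v\alpha\notin R$; that lemma needs the already-established fact that $R$ is associated in $T$. The computation on $v\alpha+x$ then forces $v\alpha\in R$. You instead observe that $\{t\in T : t^2\in I\}$ is closed under multiplication by $T$, because $I$ is an ideal of $T$. The same computation therefore gives $tT\subseteq R$, hence $t\in(R:T)=I$ directly.

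Your route is self-contained: the radicality conclusion no longer depends on the first part or on Lemma \ref{associate not in R}. It also spells out the reduction from $t^n\in I$ to $t^2\in I$, which the paper takes for granted. The paper's route, with its twist by a unit $v$, is the template it later adapts in Theorem \ref{HFD power series over order}. There, multiplying by arbitrary elements of the larger ring is not available.
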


\begin{proof}
    Assume that $R[[x]]$ is an associated subring of $T[[x]]$. In particular, this means that for any $t\in T$, there exists $u=u_0+b_1x+b_2x^2+\cdots\in U(T[[x]])$ such that $tu\in R[[x]]$. Then the constant term $tu_0\in R$ with $u_0\in U(T)$, so $R$ is an associated subring of $T$.

    Now assume toward a contradiction that $I$ is not a radical ideal of $T$. Then there is some $\alpha\in T\backslash I$ for which $\alpha^2\in I$. By Lemma \ref{associate not in R}, there exists some $v\in U(T)$ such that $v\alpha\notin R$. Then consider $v\alpha+x\in T[[x]]\backslash R[[x]]$. Since $R[[x]]$ is an associated subring of $T[[x]]$, there must exist $u=u_0+b_1x+b_2x^2+\dots\in U(T[[x]])$ (i.e., $u_0\in U(T)$ and $b_i\in T$ for $i\in\bN$) such that $(v\alpha+x)u\in R[[x]]$. In particular, this means that $u_0v\alpha \in R$ and $r:=v\alpha b_1+u_0\in R$. Note that since $\alpha\in\sqrt{I}$, then $v\alpha b_1$ lies in every maximal ideal containing $I$. Since $u_0\in U(T)$ and $r=v\alpha b_1+u_0$, this means that $r$ lies in none of the maximal ideals containing $I$; thus, $r+I\in \quot{R}{I}\cap U(\sfrac{T}{I})=U(\sfrac{R}{I})$. Then there exists $s\in R$ such that $sr\equiv 1\modulo{I}$.

    Now note the following:
    $$u_0v\alpha=(r-v\alpha b_1)(v\alpha)=r(v\alpha)-\alpha^2 v^2b_1\in R.$$
    Since $\alpha^2\in I$, then $\alpha^2 vb_1\in R$. Thus, $\beta:=r(v\alpha)\in R$. Then $v\alpha\equiv s\beta\modulo{I}$, and thus $v\alpha\in R$, contradicting the choice of $v$. Then $I$ must be a radical ideal of $T$.
\end{proof}

Combining these two theorems provides a full characterization of when $R[[x]]$ is an associated subring of $T[[x]]$ in the case that $R$ is an order in a number field and $T=\Rbar$, the full ring of integers.

\begin{corollary}
    Let $R$ be a non-maximal order in a number field and $I=(R:\Rbar)$. Then $R[[x]]$ is an associated subring of $\Rbar[[x]]$ if and only if $R$ is an associated order and $I$ is a radical ideal of $\Rbar$ (i.e., $I$ factors into a products of distinct prime ideals of $T$).
\end{corollary}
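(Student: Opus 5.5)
The plan is to combine Theorem \ref{associated ps sufficient} and Theorem \ref{associated ps necessary}, using the special structure of orders in number fields to check their hypotheses. Throughout, $R$ is a non-maximal order, so $R\subsetneq \Rbar$. Since $\Rbar$ is integral over $R$, the hypothesis $U(\sfrac{R}{I})=\quot{R}{I}\cap U(\sfrac{\Rbar}{I})$ holds.

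For the forward direction, if $R[[x]]$ is associated in $\Rbar[[x]]$, then Theorem \ref{associated ps necessary} applies directly. It gives that $R$ is an associated order and that $I$ is a radical ideal of $\Rbar$. In the Dedekind domain $\Rbar$, a nonzero radical ideal is exactly a product of distinct primes, and $I$ is nonzero because $\Rbar$ is a finitely generated $R$-module.

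For the converse, assume $R$ is associated and $I$ is radical, and verify the three conditions of Theorem \ref{associated ps sufficient}.
\begin{enumerate}
    \item By the integrality remark in that theorem (via \cite[Theorem 2.11]{subringrelations}), associated alone suffices. Alternatively, the paper notes that associated orders are locally associated.
    \item Since $I$ is radical and nonzero in the Dedekind domain $\Rbar$, we have $I=M_1\cdots M_n=\bigcap_{i=1}^n M_i$ for distinct maximal ideals $M_i$.
    \item This is the step that needs an argument: the contractions $R\cap M_i$ must be pairwise comaximal in $R$.
\end{enumerate}
Each $R\cap M_i$ is a nonzero prime of $R$, hence maximal, because $R$ has dimension one (or because $R/(R\cap M_i)$ embeds in the finite field $\Rbar/M_i$ and is therefore a finite domain, hence a field). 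So comaximality fails only if $R\cap M_i=R\cap M_j$ for some $i\neq j$.

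To rule this out, use the fact that associated orders are ideal-preserving, recorded from \cite[Corollary 4.13]{subringrelations}. Take $J_1=M_i$ and $J_2=M_j$. Since $M_i\nsubseteq M_j$, the ideal-preserving property gives $R\cap M_i\nsubseteq M_j$, so $R\cap M_i\neq R\cap M_j$. Two distinct maximal ideals are comaximal, which gives condition (3). Theorem \ref{associated ps sufficient} then yields that $R[[x]]$ is an associated subring of $\Rbar[[x]]$.

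I expect condition (3) to be the main obstacle, since it is where the ideal-preserving consequence of associativity has to be invoked. Everything else is standard Dedekind-domain bookkeeping.
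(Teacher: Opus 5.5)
Your proposal is correct and follows the paper's proof essentially step for step. The forward direction is Theorem \ref{associated ps necessary}, with integrality supplying the unit hypothesis. The converse verifies the three hypotheses of Theorem \ref{associated ps sufficient}:
\begin{enumerate}
    \item associated orders are locally associated and ideal-preserving;
    \item radical ideals in the Dedekind domain $\Rbar$ are finite intersections of maximal ideals;
    \item ideal-preservation together with $\dim R=1$ makes the contractions $R\cap M_i$ distinct maximal, hence pairwise comaximal, ideals.
\end{enumerate}
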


\begin{proof}
    If $R[[x]]$ is an associated subring of $\Rbar[[x]]$, then Theorem \ref{associated ps necessary} immediately tells us that $R$ is an associated order with radical conductor ideal $I$. We must show the converse.

    Assume that $R$ is an associated order whose conductor ideal $I$ is a radical ideal of $\Rbar$. Since $R$ is an associated order, it is both ideal-preserving and locally associated. Thus, condition (1) of Theorem \ref{associated ps sufficient} is satisfied. Furthermore, since $I$ is radical in $\Rbar$, $I=P_1\cdots P_k=\bigcap_{i=1}^kP_i$ for some distinct primes $P_1,\dots,P_k$ of $\Rbar$. Since $\Rbar$ is Dedekind, each $P_i$ must be maximal; thus, condition (2) of Theorem \ref{associated ps sufficient} is satisfied. Finally, since $R$ is an ideal-preserving order and $\dim(R)=1$, the ideals $R\cap P_1,\dots,R\cap P_k$ are pairwise distinct prime (and thus maximal) ideals of $R$. Then the $R\cap P_i$'s are pairwise comaximal as ideals of $R$, so condition (3) of Theorem \ref{associated ps sufficient} is satisfied. Then by Theorem \ref{associated ps sufficient}, $R[[x]]$ is an associated subring of $T[[x]]$.
\end{proof}

\begin{example}
    \label{ps not associated}
    Let $T=\bigcup_{i=0}^\infty T_i$ be the domain constructed in Example \ref{polynomial not la} and $R=\bZ+2T$. Then $T$ is integral over $R$, but $I:=(R:T)=2T$ is not a radical ideal of $T$ (in particular, $\sqrt{2}\in T\backslash I$, but $2=(\sqrt{2})^2\in I$). By Theorem \ref{associated ps necessary}, $R[[x]]$ is not an associated subring of $T[[x]]$.
\end{example}

\begin{example}
    \label{ps associated}
    Let $p_1,\dots,p_k$ be (finitely many) inert primes in the quadratic number ring $\bZ[\sqrt{2}]$; that is, $P_i:=p_i\bZ[\sqrt{2}]$ is a prime ideal for each $1\leq i\leq k$. Let $T$ be the localization of $\bZ[\sqrt{2}]$ by the multiplicatively closed set $S=\bigcap_{i=1}^kP_i^C$ so that every element of $\bZ[\sqrt{2}]$ which does not lie in any of the $P_i$'s becomes a unit. We will let $M_i:=P_iT$ be the maximal ideal of $T$ lying over $P_i$ for each $1\leq i\leq k$, $I:=M_1\dots M_k=(p_1\cdots p_k)T$, and $R:=\bZ+I$. 
    
    First, note that $I=(R:T)$. To see this, let $\alpha\in R$ such that $\alpha\sqrt{2}\in R$. Then $\alpha=n+\beta$ for some $\beta\in I$ and $n\sqrt{2}+\beta\sqrt{2}\in R$, so $n\sqrt{2}\in R$. Then there exist $m\in \bZ$, $t\in \bZ[\sqrt{2}]$, and $s\in S$ such that $n\sqrt{2}=m+\frac{p_1\cdots p_kt}{s}$. Then $s(-m+n\sqrt{2})\in p_1\cdots p_k\bZ[\sqrt{2}]$. Since $s\notin P_i$ for each $1\leq i\leq k$, then $-m+n\sqrt{2}\in P_i$, and thus $n\in p_1\cdots p_k\bZ$. Then $\alpha\in I$, so $(R:T)\subseteq I$; the reverse inclusion is obvious. Now note that $R$ is a subring of $T$ whose conductor ideal $I=(R:T)$ is the intersection of the finitely many maximal ideals $M_1,\dots,M_n$ of $T$; moreover, since the $M_i$'s lie over distinct prime integers, $R\cap M_1,\cdots,R\cap M_k$ are comaximal ideals of $R$.

    Finally, let $\alpha\in T$. If $\alpha$ is comaximal to $I$, then $\alpha\in U(T)$; thus, there is some $u=\alpha^{-1}\in U(T)$ such that $\alpha u=1$ is an element of $R$ comaximal to $I$. Then $R$ is a locally associated subring of $T$. If $\alpha$ is not comaximal to $I$, assume without loss of generality that for some $1\leq j\leq k$, $\alpha\in M_i$ for $1\leq i\leq j$ and $\alpha\notin M_i$ for $j<i\leq k$. Since each $M_i$ is maximal, there exists $t_i\in T$ for each $j<i\leq k$ such that $t_i\alpha\equiv 1\modulo{M_i}$. By the Chinese Remainder Theorem, we may select $u\in T$ such that $u\equiv 1\modulo{M_i}$ for $1\leq i\leq j$ and $u\equiv t_i\modulo{M_i}$ for $j<i\leq k$. Since $u$ does not lie in any of the $M_i$'s, it follows that $u\in U(T)$. Moreover, $u\alpha\in \bigcap_{i=1}^k(\bZ+M_i)=R$. Then $R$ is also an associated subring of $T$. By Theorem \ref{associated ps sufficient}, $R[[x]]$ is an associated subring of $T[[x]]$.
\end{example}

Before moving on to the question of locally associated power series extensions, we take a moment to apply these results to a question about half-factorial power series over orders. Notably, the approach we use to tackle this question is very similar to that from the proof of Theorem \ref{associated ps necessary}. We will also make frequent use of the following lemmata, which are refinements of \cite[Lemma 6.5]{radicalconductor} and Lemma \ref{associate not in R}, respectively.

\begin{lemma}
    \label{non-HFD construction}
    Let $R$ be an order in a number field with conductor ideal $I=(R:\Rbar)$. Suppose that there exists a power series $f=g(h+k)\in \Irr(R[[x]])$, with $g\in R[[x]]$ a nonunit, $h\in \Rbar[[x]]$ such that $h^n\in R[[x]]$ for all $n\geq 2$, and $k\in R[[x]]$ such that $h+k$ is a nonunit. Then $R[[x]]$ is not an HFD.
\end{lemma}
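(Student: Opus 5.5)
The plan is to exhibit two factorizations of a single element of $R[[x]]$ into irreducibles whose lengths differ. The natural element is $f^n$ for a suitable $n\geq 2$. Since $h^n\in R[[x]]$, we can regroup
$$f^n=g^n(h+k)^n=g^n\sum_{j=0}^n\binom{n}{j}h^jk^{n-j}.$$
In the binomial expansion, every term with $j\geq 2$ lies in $R[[x]]$ by hypothesis. The terms with $j=0$ and $j=1$ are $k^n$ and $nhk^{n-1}$.

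The first key step is to show that $(h+k)^n\in R[[x]]$ for all large $n$, or at least that $g(h+k)^n\in R[[x]]$. The term $nhk^{n-1}$ is the problematic one. I would handle it via $g$: since $f=g(h+k)\in R[[x]]$ and $gk\in R[[x]]$, we get $gh\in R[[x]]$. Then
$$g\,(h+k)^n=g\Bigl(\sum_{j\geq 2}\binom{n}{j}h^jk^{n-j}\Bigr)+gk^n+n\,(gh)k^{n-1}\in R[[x]].$$
So $g(h+k)^n\in R[[x]]$ for every $n\geq 1$.

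The second step produces the length discrepancy. Take $n=2$ and compare $f^2=g\cdot g(h+k)^2$.
\begin{itemize}
\item On the left, $f^2$ is a product of the two atoms $f,f$, giving a factorization of length $2$.
\item On the right, $g$ is a nonunit of $R[[x]]$ and $g(h+k)^2\in R[[x]]$. The latter is a nonunit, since $h+k$ is a nonunit in $\Rbar[[x]]$: its constant term is a nonunit of $\Rbar$, hence of $R$ after the factorization.
\end{itemize}
This gives $f^2$ as a product of two nonunits of $R[[x]]$, which is not yet enough. To get a strictly longer factorization, I would instead use $f^3=g\cdot g\cdot g(h+k)^3$. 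This is a product of three nonunits of $R[[x]]$, while $f^3$ also has the length-$3$ atomic factorization $f\cdot f\cdot f$. Counting does not immediately separate these, so I would aim for $f^n$ written as $g^{n-1}\cdot g(h+k)^n$, that is, $n$ nonunits.

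The remaining step is to refine both sides into atoms. $R[[x]]$ is atomic (it is Noetherian, $R$ being an order). Each nonunit factor then has at least one atom, and I would argue that $g$ contributes at least one extra atom by exploiting the structure of $g(h+k)^n$, perhaps written as $g(h+k)\cdot(h+k)^{n-1}$-type pieces that themselves split further.

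The main obstacle is exactly this counting: guaranteeing that some factorization of a power of $f$ has length strictly different from that of $f^m$. My first attempt would be to factor $(h+k)^2$ itself, for $n=2$ and $n=3$, in $R[[x]]$ (legitimate when $k(2h+k)\in R[[x]]$, e.g. via $2$ and conductor arguments). If $(h+k)^m\in R[[x]]$ for some $m\geq 2$, then
$$f^m=g^m(h+k)^m.$$
The right side has at least $m+1$ atoms, because $g^m$ contributes at least $m$ and the nonunit $(h+k)^m$ at least $1$. The left side has exactly $m$ atoms. This contradicts half-factoriality, so I would work hardest to secure $(h+k)^m\in R[[x]]$ from the hypotheses.
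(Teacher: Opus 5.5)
\textbf{There is a genuine gap: the proposal never shows that $(h+k)^m\in R[[x]]$ for some $m\geq 2$, and that is the only non-trivial step of the lemma.} Your final counting argument is correct and matches the paper's. If $(h+k)^m\in R[[x]]$, then $f^m=g^m(h+k)^m$ has at least $m+1$ atoms, while $f\cdots f$ has exactly $m$. The count works because $R[[x]]$ is atomic, each copy of the nonunit $g$ contributes at least one atom, and $(h+k)^m$ contributes at least one. The last point holds because $(h+k)^m$ is a nonunit of $\Rbar[[x]]$ and hence of $R[[x]]$.

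Your detour through $g(h+k)^n\in R[[x]]$ is valid. As you noticed, though, it only writes $f^n=g^{n-1}\cdot g(h+k)^n$ as a product of $n$ nonunits, so it gives no discrepancy in length. Your suggestion $m=2$ also fails in general: $2hk$ need not lie in $R[[x]]$ unless, say, $2\in I$.

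The missing idea is to choose the exponent $m$ to be a positive integer lying in the conductor. Since $R$ is an order, $\Rbar/I$ is a finite additive group. Hence $m:=\abs{\Rbar/I}$ satisfies $m\cdot 1\in I$ by Lagrange's theorem; replace $m$ by $2m$ if you want $m\geq 2$. Then in
\[
(h+k)^m=\sum_{i=2}^m\binom{m}{i}h^ik^{m-i}+m\,hk^{m-1}+k^m
\]
the troublesome middle term is $m\cdot(hk^{m-1})$ with $hk^{m-1}\in\Rbar[[x]]$. Because $I$ is an ideal of $\Rbar$ contained in $R$, this term lies in $I[[x]]\subseteq R[[x]]$. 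The other terms lie in $R[[x]]$ by the hypotheses on $h$ and $k$. This is exactly where the hypothesis that $R$ is an order is used, and your proposal leaves this step open.
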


\begin{proof}
    First, note that since $R$ is an order in a number field, there must exist some $m\in \bZ\cap I$ (in particular, $m=|\quot{\Rbar}{I}|$ will always work). Then expanding $(h+k)^m$ yields:
    $$(h+k)^{m}=\sum_{i=2}^{m}\binom{m}{i}h^ik^{m-i}+mhk^{m-1}+k^{m}.$$
    Since $k\in R[[x]]$, then $k^{m}\in R[[x]]$ as well. Since $m\in I$, then $m(hk^{m-1})\in I[[x]]\subseteq R[[x]]$. Finally, since $\binom{m}{i}\in\bZ\subseteq R$, $h^i\in R[[x]]$ for each $i\geq 2$, and $k\in R[[x]]$, then each term in the summation is also an element of $R[[x]]$. Thus, $(h+k)^m\in R[[x]]$. Then in $R[[x]]$, $g^m$ can be factored into at least $m$ irreducible elements and $(h+k)^m$ can be factored into at least one irreducible (since $h+k$ is a nonunit). Then $f^m=g^m(h+k)^m$ can simultaneously be written as a product of $m$ irreducibles in $R[[x]]$ and strictly more than $m$ irreducibles. Thus, $R[[x]]$ is not an HFD.
\end{proof}

\begin{lemma}
    \label{associate in intermediate orders}
    Let $R$ be an associated order in a number field with conductor ideal $I=(R:\Rbar)$. Also assume that $I=J_1J_2$, with $J_1$ and $J_2$ relatively prime ideals of $\Rbar$. If $\alpha\notin J_1$, then there exists some $u\in U(T)$ such that $u\alpha\notin R+J_1$ and $u\alpha\in R+J_2$.
\end{lemma}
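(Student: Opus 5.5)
The plan is to split the two requirements using the Chinese Remainder Theorem modulo $I=J_1J_2=J_1\cap J_2$. First I produce a unit $u_1$ with $u_1\alpha\notin R+J_1$ and a unit $v$ with $v\alpha\in R$. Then I glue them modulo $J_1$ and $J_2$, and use the locally associated property to replace the glued element by a genuine unit of $\Rbar$. Throughout, $T=\Rbar$. Since $R$ is an associated order, it is also locally associated and ideal-preserving, as recalled in Section 2. Also, every intermediate order $R\subseteq S\subseteq \Rbar$ inherits the associated property. In particular $R+J_1$ is an associated subring of $\Rbar$.

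\emph{Step 1.} I will show $(R+J_1:\Rbar)=J_1$, so that Lemma \ref{associate not in R}, applied to $R+J_1\subseteq \Rbar$ and the element $\alpha\notin J_1$, gives $u_1\in U(\Rbar)$ with $u_1\alpha\notin R+J_1$. For this I mimic Lemma \ref{conductor of R+J}, now using coprimality of $J_1,J_2$ in the Dedekind domain $\Rbar$ instead of maximality.
\begin{itemize}
\item Since $R$ is ideal-preserving and $J_2\nsubseteq J_1$, I pick $e\in R\cap J_2$ with $e\notin \mathfrak p$ for every prime $\mathfrak p\supseteq J_1$, so that $e$ is a unit modulo $J_1$.
\item Let $c\in(R+J_1:\Rbar)$. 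For $t\in\Rbar$, write $ct=r+j$ with $r\in R$ and $j\in J_1$. Then $ect=er+ej$, where $er\in R$ and $ej\in J_1J_2=I\subseteq R$.
\item Hence $ec\in I\subseteq J_1$, and since $e$ is a unit modulo $J_1$, this forces $c\in J_1$.
\end{itemize}
This step is the main obstacle. The delicate point is choosing $e\in R\cap J_2$ avoiding all primes over $J_1$, which is where ideal-preservation is used. I would first try prime avoidance inside $R$: none of the finitely many contractions $R\cap\mathfrak p$ (for $\mathfrak p\supseteq J_1$) contains $R\cap J_2$, precisely by ideal-preservation.

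\emph{Step 2.} Since $R$ is associated, choose $v\in U(\Rbar)$ with $v\alpha\in R$. By the Chinese Remainder Theorem choose $w\in\Rbar$ with $w\equiv u_1\pmod{J_1}$ and $w\equiv v\pmod{J_2}$. Then $w$ is a unit modulo $I$. By characterization (2) of Definition \ref{locally associated}, there exist $r\in R$ with $r+I\in U(\sfrac{R}{I})$ and $\beta\in I$ such that $u:=wr+\beta\in U(\Rbar)$.

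\emph{Step 3.} I verify the two membership conditions.
\begin{itemize}
\item Modulo $J_2$ we have $u\alpha\equiv r(v\alpha)\in R$, so $u\alpha\in R+J_2$.
\item Modulo $J_1$ we have $u\alpha\equiv r(u_1\alpha)$. Suppose $u\alpha\in R+J_1$. Take $s\in R$ with $sr\equiv1\pmod I$. Then
$$u_1\alpha=sr\,u_1\alpha-(sr-1)u_1\alpha\in R+J_1,$$
since $sr\,u_1\alpha\in R+J_1$ and $(sr-1)u_1\alpha\in I\subseteq J_1$.
\end{itemize}
The second point contradicts the choice of $u_1$, so $u\alpha\notin R+J_1$, and $u$ is the required unit.
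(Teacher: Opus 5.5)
Your proof is correct and follows essentially the paper's argument. You apply Lemma \ref{associate not in R} to $R+J_1$ to obtain $u_1$, glue with the Chinese Remainder Theorem, and use the locally associated property to lift to a unit of $\Rbar$; invertibility of $r$ modulo $I$ then carries $u_1\alpha\notin R+J_1$ over to $u\alpha$. There are two minor differences. The paper cites \cite[Theorem 4.5]{subringrelations} for $(R+J_1:\Rbar)=J_1$, whereas you prove the needed inclusion directly from ideal-preservation and $J_1J_2=I$, and that argument is valid. The paper also first replaces $\alpha$ by an associate in $R$ and glues $u_1$ with $1$, whereas you glue $u_1$ with $v$, which is equivalent.
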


\begin{proof}
    Since $R$ is an associated order, it must be both ideal-preserving and locally associated. Then \cite[Theorem 4.5]{subringrelations} tells us that $(R+J_1:\Rbar)=J_1$. Moreover, it will suffice to show the result under the assumption that $\alpha\in R$, since every element of $\Rbar$ has an associate in $R$ and association is transitive.

    Let $\alpha\in R\backslash J_1$. By Lemma \ref{associate not in R}, there exists some $u_1\in U(\Rbar)$ such that $u_1\alpha\notin R+J_1$. By the Chinese Remainder Theorem, we may select $t\in \Rbar$ such that $t\equiv u_1\modulo{J_1}$ and $t\equiv 1\modulo{J_2}$. Since $t$ is comaximal to $I$ and $R$ is locally associated, there exist $u\in U(\Rbar)$, $\beta\in I$, and $r\in R$ comaximal to $I$ such that $tr+\beta=u$. Then $u\alpha=(tr+\beta)\alpha=(t\alpha)r+\beta\alpha\equiv u_1\alpha r\modulo{J_1}$. Since $r\in R$ is invertible modulo $J_1$ and $u_1\alpha\notin R+J_1$, it follows that $u\alpha\notin R+J_1$. Furthermore, $u\alpha=(t\alpha)r+\beta\alpha\equiv \alpha r\modulo{J_2}$, so $u\alpha\in R+J_2$.
\end{proof}

\begin{theorem}
    \label{HFD power series over order}
    Let $R$ be an order in a number field with conductor ideal $I=(R:\Rbar)$. If the following conditions all hold, then $R[[x]]$ is an HFD:
    \begin{enumerate}
        \item $\Rbar$ is an HFD;
        \item $R$ is an associated order;
        \item $I$ is a radical ideal of $\Rbar$.
    \end{enumerate}
    If $R[[x]]$ is an HFD, then the following conditions hold:
    \begin{enumerate}
        \item $\Rbar$ is an HFD;
        \item $R$ is an associated order;
        \item $I$ factors into prime ideals $P_1^{a_1}\cdots P_k^{a_k}$, with $a_i=1$ if $P_i$ is principal in $\Rbar$ and $a_i\leq 2$ if $P_i$ is non-principal in $\Rbar$.
    \end{enumerate}
\end{theorem}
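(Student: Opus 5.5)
We treat the two implications separately. For the sufficiency direction we reduce to $\Rbar[[x]]$. For the necessity direction we argue each condition by contraposition, building a counterexample factorization with Lemma \ref{non-HFD construction}.

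\emph{Sufficiency.} Suppose $\Rbar$ is an HFD, $R$ is associated, and $I$ is radical. By the corollary to Theorems \ref{associated ps sufficient} and \ref{associated ps necessary}, $R[[x]]$ is an associated subring of $\Rbar[[x]]$. Since $\Rbar$ is a regular Noetherian (Dedekind) domain, $\Rbar[[x]]$ is a Krull domain with $\Cl(\Rbar[[x]])\cong\Cl(\Rbar)$. An HFD Dedekind domain has $|\Cl(\Rbar)|\le 2$, so by Carlitz's criterion $\Rbar[[x]]$ is an HFD.

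It then remains to transfer half-factoriality from $\Rbar[[x]]$ down to $R[[x]]$. Here I would invoke the result of \cite{radicalconductor} giving $\rho(R[[x]])=\rho(\Rbar[[x]])$ when $R$ is associated. If that result needs a hypothesis we lack, I would reprove its mechanism directly:
\begin{itemize}
\item every $f\in R[[x]]$ factors in $\Rbar[[x]]$ into irreducibles;
\item each factor can be moved into $R[[x]]$ by a unit of $\Rbar[[x]]$, since $R[[x]]$ is associated;
\item radicality of $I[[x]]$ (via the proof of Theorem \ref{associated ps necessary}) shows that irreducibles of $R[[x]]$ remain irreducible, or are units, in $\Rbar[[x]]$, so factorization lengths agree.
\end{itemize}

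\emph{Necessity.} Assume $R[[x]]$ is an HFD, and prove each condition by contraposition.

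For (1): if $\Rbar$ is not an HFD, take two factorizations of some $\beta\in\Rbar$ of different lengths. Multiplying by $m\in\bZ\cap I$ moves them into $R$. Then pass to $R[[x]]$ through elements of the form $\beta+mx$, which should give non-half-factorial behavior by the same length count used in Lemma \ref{non-HFD construction}.

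For (2): if some $\alpha\in\Rbar$ has no associate in $R$, I would choose the decomposition $f=g(h+k)$ of Lemma \ref{non-HFD construction} as follows:
\begin{itemize}
\item $h$ a multiple of $\alpha x$ lying in $I\Rbar[[x]]$ after squaring, so that $h^n\in R[[x]]$ for $n\ge2$;
\item $g$ a suitable power of $x$ or of $m$, chosen so that $f\in R[[x]]$;
\item $k$ chosen to make $f$ irreducible in $R[[x]]$.
\end{itemize}
Irreducibility of $f$ would come from the fact that any splitting $f=f_1f_2$ would produce, on the relevant coefficient, an associate of $\alpha$ inside $R$.

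For (3): suppose $P^2\mid I$ with $P$ principal, or $P^3\mid I$ with $P$ arbitrary. Write $I=J_1J_2$ with $J_1$ the $P$-primary part.
\begin{itemize}
\item Pick $\alpha\in\Rbar\setminus J_1$ with $\alpha^2\in I$. This is possible exactly because the exponent of $P$ exceeds the stated bound.
\item Lemma \ref{associate in intermediate orders} gives $u\in U(\Rbar)$ with $u\alpha\notin R+J_1$ but $u\alpha\in R+J_2$.
\item Set $h=u\alpha x$ (or $u\alpha$ times a suitable series), so that $h^n\in I[[x]]\subseteq R[[x]]$ for $n\ge2$.
\item Take $g$ to be a generator $\pi$ of $P$, times a power of $x$, in the principal case. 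In the non-principal case with $a_i\ge3$, use a generator of $P^2$ instead; this is where the bound $a_i\le 2$ enters.
\end{itemize}
This makes $g(h+k)\in R[[x]]$ while $h+k\notin R[[x]]$.

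The main obstacle throughout is proving that the constructed $f$ is actually irreducible in $R[[x]]$. I would attack it by comparing constant and linear coefficients of a putative factorization modulo $J_1$ and $J_2$. The aim is to derive, as in the proof of Theorem \ref{associated ps necessary}, that $u\alpha\in R+J_1$, which is a contradiction.
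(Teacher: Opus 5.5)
The sufficiency direction is fine: it is essentially the paper's route, which simply cites \cite[Theorems 3.8, 5.9]{radicalconductor}, and you don't need your fallback sketch. The necessity direction, however, has a genuine gap.

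\textbf{Conditions (1) and (2).} You are missing the reduction the paper uses. If $\pi$ is irreducible in $R$ and $\pi=gh$ in $R[[x]]$, then $a_0b_0=\pi$. So one of $g,h$ has constant term in $U(R)$ and is a unit of $R[[x]]$. Hence factorizations of constants in $R$ remain factorizations into irreducibles in $R[[x]]$, so $R[[x]]$ being an HFD forces $R$ to be an HFD. Then \cite[Theorem 1.1]{rago} gives that $\Rbar$ is an HFD and $R$ is associated.

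Your replacements do not work. For (1), multiplying the irreducible factors by $m\in\bZ\cap I$ destroys their irreducibility, and no length count is actually extracted from $\beta+mx$.

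For (2), the strategy is unavailable in general. Lemma \ref{non-HFD construction} needs $f=g(h+k)$ irreducible in $R[[x]]$, which forces $h\notin R[[x]]$. But when $I$ is radical, $I\Rbar[[x]]=I[[x]]=\bigcap_i P_i[[x]]$ is a radical ideal of $\Rbar[[x]]$. So your requirement $h^2\in I[[x]]$ gives $h\in I[[x]]\subseteq R[[x]]$. Non-associated orders with radical conductor and UFD integral closure exist. An example is $R=\bZ+5\bZ[i]$: its conductor is $5\bZ[i]=(2+i)(2-i)$, and no associate of $1+i$ lies in $R$. There your construction has nothing to work with.

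\textbf{Condition (3).} The ingredients, Lemmas \ref{associate in intermediate orders} and \ref{non-HFD construction}, are the right ones. But the irreducibility of $f$, which you leave open, is the entire content, and your setup would obstruct it:
\begin{itemize}
\item Any positive power of $x$ in $g$ makes $f$ reducible, since $x$ is prime in $R[[x]]$.
\item The conditions $\alpha\notin J_1$, $\alpha^2\in I$ do not ensure that $g(h+k)\in R[[x]]$.
\end{itemize}

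The paper's construction is as follows. It takes $g=\pi$, $k=x$, and the constant $h=v\pi^{k_1-1}$, where $v\pi^{k_1-1}\in(R+J_2)\setminus(R+J_1)$. Then $h^m\in J_1\cap(R+J_2)\subseteq R$ for $m\ge 2$, using $(R+J_1)\cap(R+J_2)=R$ rather than $h^2\in I$. It proves $f=v\pi^{k_1}+\pi x$ irreducible in three steps:
\begin{itemize}
\item Unique factorization of the prime-power generator $v\pi^{k_1}$ in $\Rbar$ gives $a_0=uv\pi^j$ and $b_0=u^{-1}\pi^{k_1-j}$.
\item The linear coefficient forces $j=1$ and $uvb_1\in(R+P_1)\setminus P_1$, using primality of $P_1[[x]]$ when $k_1=2$.
\item Then $v\pi^{k_1-1}=(uv)b_0\in(R+P_1)(R\cap P_1^{k_1-1})\subseteq R+J_1$, a contradiction.
\end{itemize}

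In the non-principal case one needs $P_1^2$ principal. That comes from $|\Cl(\Rbar)|\le 2$, i.e.\ from condition (1), which you have not established. Odd $k_1$ also requires a separate construction $f=\alpha(v\beta^r+x)$, with $\alpha$ generating $P_1Q$ for a second non-principal prime $Q$. Your sketch does not address this case.
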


\begin{proof}
    The first part of this result follows immediately from \cite[Theorems 3.8, 5.9]{radicalconductor}. For the second part, assume that $R[[x]]$ is an HFD and let $\alpha\in R$ be a nonzero, nonunit. Since $R$ is atomic, we may factor $\alpha$ into a product of irreducible elements. Suppose we have done so in two ways: $\alpha=\pi_1\cdots\pi_m=\tau_1\cdots\tau_n.$ Since the $\pi_i$'s and $\tau_j$'s are irreducible elements of $R$, they must also be irreducible in $R[[x]]$. Then $\alpha\in R[[x]]$ can simultaneously be factored into a product of $m$ and $n$ irreducibles in the HFD $R[[x]]$, so $m=n$. Then $R$ is also an HFD. By \cite[Theorem 1.1]{rago}, it follows that $\Rbar$ is an HFD and $R$ is an associated order.

    Now write $I=P_1^{k_1}\cdots P_n^{k_n}$, and assume toward a contradiction that, without loss of generality, either $P_1$ is principal with $k_1>1$ or $P_1$ is non-principal with $k_1>2$. Let $J_1=P_1^{k_1}$ and $J_2=P_2^{k_2}\cdots P_n^{k_n}$, and note by \cite[Theorem 4.6]{subringrelations} that $R=(R+J_1)\cap (R+J_2)$. If $P_1$ is a principal ideal of $\Rbar$ and $k_1>1$, we let $P_1=\pi\Rbar$; since $R$ is an associated order, we may assume without loss of generality that $\pi\in R$. Since $\pi^{k_1-1}\notin J_1$, Lemma \ref{associate in intermediate orders} tells us that there must exist some $v\in U(T)$ such that $v\pi^{k_1-1}\in (R+J_2)\backslash(R+J_1)$. Then consider the nonzero, nonunit power series $f:=\pi(v\pi^{k_1-1}+x)=v\pi^{k_1}+\pi x\in R[[x]]$, and suppose that $f$ reduces in $R[[x]]$. Then there exist nonunit power series $g=a_0+a_1x+\cdots$ and $h=b_0+b_1x+\cdots$ in $R[[x]]$ such that $f=gh$.

    Considering the constant term of this product, we have $a_0b_0=v\pi^{k_1}$. Since $\pi^{k_1}\Rbar=P^{k_1}$, then this element exhibits unique factorization in $\Rbar$. Then for some $1\leq j<k_1$ and $u\in U(\Rbar)$, $a_0=uv\pi^j\in R$ and $b_0=u^{-1}\pi^{k_1-j}$; without loss of generality, we assume that $k_1-j\geq j$. Now considering the constant term of $f=gh$, we have $$\pi=a_0b_1+a_1b_0=uv\pi^jb_1+u^{-1}\pi^{k_1-j}a_1=\pi^j(uvb_1+u^{-1}\pi^{k_1-2j}a_1).$$
    If $j>1$, then $\pi^2|\pi$, a contradiction. Thus, $j=1$ and $1=uvb_1+u^{-1}\pi^{k_1-2}a_1$. If $k_1>2$, this means that $uvb_1\in (R+P_1)\backslash P_1$. If $k_1=2$, then since $f=gh\in P_1[[x]]$, a prime ideal of $\Rbar[[x]]$, either $g$ or $h$ must lie in $P_1[[x]]$. Since, in this case, $g$ and $h$ are interchangeable, we may assume without loss of generality that $g\in P_1[[x]]$ so that $a_1\in P_1$; again, this tells us that $uvb_1\in (R+P_1)\backslash P_1$. Then $b_1\in R$ is invertible modulo $P_1$, so $uv\in R+P_1$ as well. Then $b_0=u^{-1}\pi^{k_1-1}\in R\cap P^{k_1-1}$ and $uv\in R+P_1$, so $v\pi^{k_1-1}=(uv)b_0\in (R+P)(R\cap P^{k-1})\subseteq R+J_1$, a contradiction. Then $f=\pi(v\pi^{k_1-1}+x)\in \Irr(R[[x]])$. Since $(v\pi^{k_1-1})^m\in J_1\cap (R+J_2)\subseteq R$ for all $m\geq 2$, Lemma \ref{non-HFD construction} tells us that $R[[x]]$ is not an HFD.

    Now assume that $P_1$ is a non-principal prime ideal and $k_1>2$ is even. Then we let $\beta\in R$ be a generator of $P_1^2$ and $r=\frac{k_1}{2}\geq 2$. Since $\beta^{r-1}\notin J_1$, Lemma \ref{associate in intermediate orders} tells us that there exists some $v\in U(\Rbar)$ such that $v\beta^{r-1}\in (R+J_2)\backslash(R+J_1)$. In a similar manner to the previous case, we consider the nonzero, nonunit power series $f=\beta(v\beta^{r-1}+x)=v\beta^{r}+\beta x\in R[[x]].$
    As before, suppose that $f=gh$ reduces in $R[[x]]$.
    
    Note that $(\beta^{r})=P_1^{k_1}$, so $a_0b_0=v\beta^{r}$ exhibits unique factorization into irreducibles in $\Rbar$. Then for some $u\in U(\Rbar)$ and $1\leq j< r$, $a_0=uv\beta^j$ and $b_0=u^{-1}\beta^{r-j}$; as before, we assume without loss of generality that $r-j\geq j$. Considering the linear term of $f=gh$, we have
    $$\beta=a_0b_1+a_1b_0=uv\beta^jb_1+u^{-1}\beta^{r-j}a_1.$$
    If $j>1$, then $\beta^2\mid \beta$, a contradiction; then $j=1$ and $1=uvb_1+u^{-1}\beta^{r-2}a_1$. If $r>2$, this means that $uvb_1\in (R+P_1)\backslash P_1$. If $r=2$, we conclude as in the previous case that we may assume without loss of generality that $a_1\in P_1$, again telling us that $uvb_1\in (R+P_1)\backslash P_1.$ In any case, $b_1\notin P_1$. Now considering the quadratic coefficient of $f=gh$, we have
    $$0=a_0b_2+a_1b_1+a_2b_0=uv\beta b_2+a_1b_1+u^{-1}\beta^{r-1}a_2.$$
    In particular, note that $0$, $\beta$, and $\beta^{r}$ are all elements of $P_1^2$, so $a_1b_1\in P_1^2$ as well. Since $b_1\notin P_1$, this means that $a_1\in P_1^2$. Then $uvb_1=1-u^{-1}\beta^{r}a_1\in R+P_1^2$; since $b_1\in R$ is invertible modulo $P_1$, this tells us that $uv\in R+P_1^2$ as well. Then $v\beta^{r-1}=(uv)(u^{-1}\beta^{r-1})\in (R+P_1^2)(R\cap P_1^{k_1-2})\subseteq R+J_1$, a contradiction. Then $f=\beta(v\beta^{r-1}+x)\in \Irr(R[[x]])$. Since $(v\beta^{r-1})^m\in J_1\cap (R+J_2)\subseteq R$ for all $m\geq 2$, Lemma \ref{non-HFD construction} once again tells us that $R[[x]]$ is not an HFD.

    Finally, assume that $P_1$ is a non-principal prime and $k_1> 2$ is odd. We let $\beta\in R$ be a generator of $P_1^2$, and for some non-principal prime $Q\neq P_1$, we let $\alpha\in R$ be a generator of $P_1Q$. Let $r=\frac{k_1-1}{2}\geq 1$; since $\beta^r\notin J_1$, then Lemma \ref{associate in intermediate orders} tells us that there exists $v\in U(\Rbar)$ such that $v\beta^r\in(R+J_2)\backslash (R+J_1)$. We now consider the nonzero, nonunit power series $f=\alpha(v\beta^r+x)=v\alpha\beta^r+\alpha x\in R[[x]]$
    and suppose as before that $f=gh$ reduces in $R[[x]]$.
    
    Since $(\alpha\beta^r)=P^kQ$, we again note that $\alpha\beta^r$ admits unique factorization in $\Rbar$. Then since $a_0b_0=v\alpha\beta^r$, there must exist $u\in U(\Rbar)$ and $0\leq j<r$ such that, without loss of generality, $a_0=uv\alpha\beta^j$ and $b_0=u^{-1}\beta^{r-j}$. Considering the linear term of $f=gh$, we have $$\alpha=a_0b_1+a_1b_0=uv\alpha\beta^jb_1+u^{-1}\beta^{r-j}a_1.$$
    Since $\alpha\in P_1\backslash P_1^2$ and $\beta^{r-j}\in P_1^2$, then $uv\alpha\beta^jb_1\notin P_1^2$. Notably, this means that $j=0$ and $b_1\notin P_1$. Thus, $\alpha(1-uvb_1)=u^{-1}\beta^ra_1\in P_1^{k_1-1}$, so $1-uvb_1\in P^{k_1-2}$ with $k_1>2$. As before, since $uvb_1\in R+P_1$ and $b_1\in R\backslash P_1$, it follows that $uv\in R+P_1$. Then $v\beta^r=(uv)(u^{-1}\beta^r)\in(R+P_1)(R\cap P_1^{k-1})\subseteq R+J_1$, a contradiction. Then $f=\alpha(v\beta^r+x)\in \Irr(R[[x]])$, so Lemma \ref{non-HFD construction} again tells us that $R[[x]]$ is not an HFD.
\end{proof}

\begin{remark}
    Note that Theorem \ref{HFD power series over order} does not completely characterize when the ring of formal power series $R[[x]]$ over an order $R$ is an HFD; however, it comes quite close. The only unsettled cases come when $R$ is an associated order in the HFD $\Rbar$ whose conductor ideal is exactly divisible by $P^2$ for some non-principal prime ideal $P$. An example of such a ring $R[[x]]$ which is not half-factorial was provided in \cite[Theorem 6.8]{radicalconductor}. Whether such a ring may be half-factorial is still an open question.
\end{remark}

Finally, we consider when $R_\bNo[[x]]$ is a locally associated subring of $T_\bNo[[x]]$. As it turns out, this case exhibits the most ready inheritance of all.

\begin{theorem}
    \label{la power series}
    Let $\{R_i\}_{i=0}^\infty$ and $\{T_i\}_{i=0}^\infty$ be increasing sequences of commutative rings with identity such that $R_i\subseteq T_i$ for each $i\in\bNo$, and let $J_i=\bigcap_{i=0}^\infty(R_{i+k}:T_i)$ for each $i\in\bNo$. Then $R_\bNo[[x]]$ is a locally associated subring of $T_\bNo[[x]]$ if and only if, for every $t_0\in T_0$ comaximal to $J_0$, there exists $u\in U(T_0)$ such that $t_0u$ is an element of $R_0$ comaximal to $J_0$.
\end{theorem}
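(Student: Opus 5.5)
The plan is to follow the proof of Theorem \ref{la polynomial}, using parts (2) and (4) of Proposition \ref{unit construction} in place of parts (1) and (3). The key simplification is that in the power series setting, whether an element is comaximal to the conductor depends only on its constant term. By Proposition \ref{conductor ideal}, the conductor is $(R_\bNo[[x]]:T_\bNo[[x]])=J_\bNo[[x]]$. By Proposition \ref{unit construction}(4), an element $f=t_0+t_1x+\cdots\in T_\bNo[[x]]$ is comaximal to $J_\bNo[[x]]$ (that is, $f+J_\bNo[[x]]$ is a unit in the quotient) exactly when $t_0+J_0\in U(\quot{T_0}{J_0})$. The same criterion, applied to $R_\bNo[[x]]$, says that $f\in R_\bNo[[x]]$ is comaximal to $J_\bNo[[x]]$ as an element of $R_\bNo[[x]]$ exactly when $t_0+J_0\in U(\quot{R_0}{J_0})$. (I read the index in the statement as $J_i=\bigcap_{k}(R_{i+k}:T_k)$, as in Proposition \ref{conductor ideal}.)

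For necessity, assume $R_\bNo[[x]]$ is locally associated in $T_\bNo[[x]]$ and let $t_0\in T_0$ be comaximal to $J_0$.
\begin{enumerate}
\item Viewed as a constant power series, $t_0$ is comaximal to $J_0[[x]]$ by Proposition \ref{unit construction}(4).
\item Characterization (1) of Definition \ref{locally associated} gives $u=u_0+b_1x+\cdots\in U(T_\bNo[[x]])$ with $t_0u\in R_\bNo[[x]]$ comaximal to $J_\bNo[[x]]$.
\item By Proposition \ref{unit construction}(2), $u_0\in U(T_0)$.
\item The constant term $t_0u_0$ lies in $R_0$, and the criterion above says it is comaximal to $J_0$ in $R_0$.
\end{enumerate}
This is exactly the stated condition.

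For sufficiency, let $f=t_0+t_1x+\cdots$ be comaximal to $J_\bNo[[x]]$, so $t_0$ is comaximal to $J_0$. The hypothesis gives $u\in U(T_0)$ with $r_0:=t_0u\in R_0$ comaximal to $J_0$. We now build $g=r_0+r_1x+\cdots\in R_\bNo[[x]]$ and $v=u^{-1}+c_1x+\cdots\in U(T_\bNo[[x]])$ with $f=gv$. The coefficient of $x^n$ in $gv$ is $u^{-1}r_n+c_nr_0+\bigl(\text{terms with indices}<n\bigr)$, where those earlier terms lie in $T_n$. So at step $n$ it suffices to show $u^{-1}R_n+r_0T_n=T_n$. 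Once this holds, induction produces all the $r_n$ and $c_n$. Then $v$ is a unit by Proposition \ref{unit construction}(2), and $g=fv^{-1}$ has constant term $r_0$, so $g$ is comaximal to $J_\bNo[[x]]$ as an element of $R_\bNo[[x]]$. This proves local association.

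The main obstacle is the identity $u^{-1}R_n+r_0T_n=T_n$, and I would prove it by mimicking the argument in Proposition \ref{unit construction}(4). Since $r_0$ is comaximal to $J_0$ in $R_0$, choose $s\in R_0$ and $\beta\in J_0$ with $r_0s+\beta=1$. For any $t\in T_n$,
$$t=r_0(st)+\beta t=r_0(st)+u^{-1}(u\beta t).$$
Here $st\in T_n$. Also $u\beta t\in J_0T_n$, because $J_0$ is an ideal of $T_0$ (the Remark after Proposition \ref{conductor ideal}) and $u\in T_0$. Finally $J_0T_n\subseteq (R_n:T_0)\subseteq R_n$, using $T_0\subseteq T_n$ and $\beta\in(R_n:T_n)$. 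Hence $t\in u^{-1}R_n+r_0T_n$.

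The only delicate point is checking that each chosen $c_n$ lies in $T_n$ and each $r_n$ lies in $R_n$, so that $g\in R_\bNo[[x]]$ and $v\in T_\bNo[[x]]$. This is built into the decomposition above: $st\in T_n$ becomes the contribution to $c_n$, and $u\beta t\in R_n$ becomes the contribution to $r_n$.
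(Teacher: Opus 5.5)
Your proof is correct and is essentially the paper's argument: necessity is read off from the constant term via Proposition \ref{unit construction}, and sufficiency is an induction on coefficients in which an element of $J_0\subseteq (R_n:T_n)$, coming from invertibility of the constant term modulo $J_0$, absorbs the discrepancy in each degree. The only difference is bookkeeping. You solve for a factorization $f=gv$ with two unknowns $r_n, c_n$ per step, in the style of the proof of Theorem \ref{associated ps sufficient}. The paper instead takes $b_n=-s_0(t_nu_0+t_{n-1}b_1+\cdots+t_1b_{n-1})$ with $s_0t_0\equiv 1\modulo{J_0}$, so that the $x^n$-coefficient of $f(u_0+b_1x+\cdots)$ becomes $(1-t_0s_0)(t_nu_0+\cdots+t_1b_{n-1})\in R_n$.
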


\begin{proof}
    First, assume that $R_\bNo[[x]]$ is a locally associated subring of $T_\bNo[[x]]$, and let $t_0$ be an element of $T_0$ comaximal to $J_0$. By Proposition \ref{unit construction}, $t_0$ is also an element of $T_\bNo[[x]]$ comaximal to $J_\bNo[[x]]$, so there must exist $u=u_0+b_1x+b_2x^2+\cdots\in U(T_\bNo[[x]])$ such that $t_0u$ is an element of $R_\bNo[[x]]$ comaximal to $J_\bNo[[x]]$. In particular, this means that the constant term $t_0u_0$ must be an element of $R_0$ comaximal to $J_0$. Then for every $t_0\in T_0$ comaximal to $J_0$, there exists $u_0\in U(T_0)$ such that $t_0u_0$ is an element of $R_0$ comaximal to $J_0$.

    To show the converse, assume that for every $t_0\in T_0$ comaximal to $J_0$, there exists $u_0\in U(T_0)$ such that $t_0u_0$ is an element of $R_0$ comaximal to $J_0$. Let $f=t_0+t_1x+t_2x^2+\cdots\in T_\bNo[[x]]$ be comaximal to $J_\bNo[[x]]$. By Proposition \ref{unit construction}, this means that $t_0\in T_0$ is comaximal to $J_0$ and $t_i\in T_i$ for each $i\in\bN$. By the assumption, there must therefore exist some $u_0\in U(T_0)$ such that $t_0u_0$ is an element of $R_0$ comaximal to $J_0$. It will now suffice to show that there exist $b_i\in T_i$ for $i\in\bNo$ such that $f(u_0+b_1x+b_2x^2+\cdots)\in R_\bNo[[x]]$.

    Assume toward induction that for some $n\in\bN$, we have constructed $b_i\in T_i$ for $i<n$ such that for every $k<n$, $t_ku_0+t_{k-1}b_1+\cdots+t_0b_k\in R_k$. We need to show that we can now select $b_n\in T_n$ such that $t_nu_0+t_{n-1}b_1+\cdots+t_0b_n\in R_n$. Since $t_0\in T_0$ is comaximal to $J_0$, we can select $s_0\in T_0$ such that $s_0t_0\equiv 1\modulo{J_0}.$ Then we select $b_n=-s_0(t_nu_0+t_{n-1}b_1+\dots+t_1b_{n-1})\in T_n$ and observe that
    $$t_ku_0+t_{n-1}b_1+\dots+t_1b_{n-1}+t_0b_n=(1-t_0s_0)(t_ku_0+t_{n-1}b_1+\dots+t_1b_{n-1}).$$
    Since $(1-t_0s_0)\in J_0\subseteq (R_n:T_n)$, then $t_ku_0+t_{n-1}b_1+\dots+t_1b_{n-1}+t_0b_n\in R_n$. We can therefore continue selecting such $b_n$ inductively to construct an element $u=u_0+b_1x+b_2x^2+\cdots\in U(T_\bNo[[x]])$ such that $fu\in R_\bNo[[x]]$. Since $fu$ will also be comaximal to $J_\bNo[[x]]$ by construction of $u_0$, $R_\bNo[[x]]$ is a locally associated subring of $T_\bNo[[x]]$.
\end{proof}

\begin{corollary}
    Let $T$ be a commutative ring with identity, $R$ a subring with identity, and $J=(R:T)$. Then $R[[x]]$ is a locally associated subring of $T[[x]]$ if and only if $R$ is a locally associated subring of $T$.
\end{corollary}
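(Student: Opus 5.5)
This follows by specializing Theorem \ref{la power series} to the constant sequences $R_i=R$ and $T_i=T$ for all $i\in\bNo$. With this choice, $R_\bNo[[x]]=R[[x]]$ and $T_\bNo[[x]]=T[[x]]$. The plan has two steps. First, identify $J_0$ with $J=(R:T)$. Second, check that the condition in Theorem \ref{la power series} is exactly characterization (1) of Definition \ref{locally associated} for $R\subseteq T$.

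For the first step, apply Proposition \ref{conductor ideal} with constant sequences. Then $J_i=\bigcap_{k=0}^\infty (R_{i+k}:T_k)=\bigcap_{k=0}^\infty (R:T)=(R:T)=J$ for every $i$. In particular $J_0=J$, and the conductor of $R[[x]]$ in $T[[x]]$ is $J[[x]]$. This is consistent with Proposition \ref{unit construction}(4): a power series is comaximal to $J[[x]]$ exactly when its constant term is comaximal to $J$.

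For the second step, the condition in Theorem \ref{la power series} now reads as follows. For every $t_0\in T$ with $t_0T+J=T$, there is $u\in U(T)$ such that $t_0u\in R$ and $t_0uR+J=R$. Writing $r=t_0u$, we get $t_0=ru^{-1}$ with $r\in R$ comaximal to $J$ and $u^{-1}\in U(T)$. This is precisely characterization (1) of Definition \ref{locally associated}, which says $R$ is a locally associated subring of $T$. Conversely, given $t_0=ru'$ as in that definition, take $u=u'^{-1}$. Theorem \ref{la power series} then gives both directions at once.

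The only point needing care is the bookkeeping in the first step. The conductor is taken inside $R$, as in the paper's convention $(R:T)=\{r\in R\mid rT\subseteq R\}$, so each intersection really is $(R:T)$ and not something larger. No nilpotence or reducedness hypothesis is required, because Theorem \ref{la power series} has none. So I expect no genuine obstacle: the corollary is a direct translation.
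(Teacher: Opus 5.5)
Your proposal is correct and follows the paper's route: the paper gives this corollary without a separate proof, as the specialization of Theorem~\ref{la power series} to the constant sequences $R_i=R$, $T_i=T$. In that case $J_0=(R:T)$, and the theorem's condition becomes exactly characterization (1) of Definition~\ref{locally associated}. You also rightly used the definition $J_i=\bigcap_{k}(R_{i+k}:T_k)$ from Proposition~\ref{conductor ideal} rather than the mis-indexed formula printed in the theorem's statement.
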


Note that in Theorem \ref{la power series}, the necessary and sufficient condition on $R_0$ and $T_0$ is very similar to stating that $R_0$ is a locally associated subring of $T_0$. The only difference is that, rather than considering elements of $T_0$ and $R_0$ which are comaximal to the conductor $(R_0:T_0)$, we consider elements comaximal to $J_0\subseteq (R_0:T_0)$. As we will see in the examples that follow, this leaves open the possibility that $R_\bNo[[x]]$ may be locally associated in $T_\bNo[[x]]$ despite $R_0$ failing to be locally associated in $T_0$. However, the reverse is impossible, as the following corollary shows.

\begin{corollary}
    \label{la constants implies la ps}
    Let $\{R_i\}_{i=0}^\infty$ and $\{T_i\}_{i=0}^\infty$ be increasing sequences of commutative rings with identity such that $R_i\subseteq T_i$ for each $i\in\bNo$, and let $J_i=\bigcap_{i=0}^\infty(R_{i+k}:T_i)$ for each $i\in\bNo$. If $R_0$ is a locally associated subring of $T_0$, then $R_\bNo[[x]]$ is a locally associated subring of $T_\bNo[[x]]$.
\end{corollary}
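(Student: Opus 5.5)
By Theorem \ref{la power series}, it is enough to verify its criterion on $R_0\subseteq T_0$. That is, given $t_0\in T_0$ comaximal to $J_0$, we must find $u\in U(T_0)$ such that $t_0u$ is an element of $R_0$ comaximal to $J_0$. The idea is to pass from $J_0$ to the larger ideal $I_0:=(R_0:T_0)$, use the hypothesis that $R_0$ is locally associated in $T_0$, and then come back down to $J_0$.

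\textbf{Step 1: comparing $J_0$ with $I_0$.} By definition $J_0=\bigcap_{k}(R_k:T_k)\subseteq (R_0:T_0)=I_0$. The Remark after Proposition \ref{conductor ideal} shows that $J_0$ is an ideal of $T_0$. Hence $t_0T_0+J_0=T_0$ implies $t_0T_0+I_0=T_0$, so $t_0$ is comaximal to $I_0$.

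\textbf{Step 2: applying local association of $R_0$.} By characterization (1) of Definition \ref{locally associated}, there exist $u\in U(T_0)$ and $r:=t_0u\in R_0$ with $rR_0+I_0=R_0$. This only gives comaximality with $I_0$, not yet with $J_0$.

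\textbf{Step 3: upgrading to $J_0$.} Since $u$ is a unit of $T_0$, $rT_0=t_0T_0$, so $rT_0+J_0=T_0$. I plan to mimic Lemma \ref{comaximal intersection}, applying it to the ideal $rT_0$, which is comaximal to $J_0$. Choose $s\in T_0$ and $\beta\in J_0$ with $rs+\beta=1$. Then $rs=1-\beta\in R_0$, because $J_0\subseteq R_0$.

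The obstacle is that $s$ itself need not lie in $R_0$, so $rs\in rR_0$ is not automatic. To handle this, write $1=ra+\gamma$ with $a\in R_0$ and $\gamma\in I_0$, using Step 2. Then
$$1=(rs+\beta)(ra+\gamma)=r(sra+s\gamma)+\beta(ra+\gamma).$$
Here $s\gamma\in R_0$ because $\gamma\in I_0=(R_0:T_0)$, and $sra=(1-\beta)a\in R_0$. Also $\beta(ra+\gamma)\in J_0$, since $J_0$ is an ideal of $T_0$. Therefore $1\in rR_0+J_0$, so $r=t_0u$ is an element of $R_0$ comaximal to $J_0$.

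The criterion of Theorem \ref{la power series} therefore holds, and $R_\bNo[[x]]$ is a locally associated subring of $T_\bNo[[x]]$. The main delicate point is Step 3, specifically producing an $R_0$-multiple of $r$ from the $T_0$-relation; the product-of-two-relations trick above resolves it.
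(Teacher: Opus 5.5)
Your proof is correct and follows essentially the paper's route: you pass from $J_0$ to $I_0=(R_0:T_0)$, apply local association of $R_0$, and then show that the $J_0$-inverse of $t_0u$ can be taken inside $R_0$. In fact, since $ra+\gamma=1$, your product identity just says $s=s(ra+\gamma)=(1-\beta)a+s\gamma\in R_0$, which is the same fact the paper gets by cancelling $ut$ modulo $I$ to obtain $s\equiv r \pmod{I}$; so the ``obstacle'' that $s$ might not lie in $R_0$ never actually arises.
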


\begin{proof}
    Let $t\in T_0$ be an element comaximal to $J_0$; that is, $tT_0+J_0=T_0$. Since $J_0\subseteq I:=(R_0:T_0)$, this also means that $t$ is comaximal to $I$. Since $R_0$ is a locally associated subring of $T_0$, there must exist some $u\in U(T_0)$ such that $ut$ is an element of $R_0$ comaximal to $I$. Let $r\in R$ and $s\in T$ such that $uts\equiv 1\modulo{J_0}$ and $utr\equiv 1\modulo{I}$. This also tells us that $uts\equiv 1\modulo{I}$, so $uts\equiv utr\modulo{I}$. Since $ut$ is a unit modulo $I$, $s\equiv r\modulo{I}$, and thus $s=r+\beta\in R_0$ for some $\beta\in I$. Therefore, $uts\equiv 1\modulo{J_0}$ implies that $utR_0+J_0=R_0$. Then $ut$ is an element of $R_0$ comaximal to $J_0$. By Theorem \ref{la power series}, $R_\bNo[[x]]$ is a locally associated subring of $T_\bNo[[x]]$.
\end{proof}

\begin{example}
    \label{ps not la}
    Let $y_0$ and $y_1$ be indeterminates, and let $T_0=\bZ[\frac{1}{2},y_0]$ and $R_0=\bZ+y_0\bZ+y_0^2T_0$. Note by Theorems \ref{associated polynomials} and \ref{la polynomial} that $R_0$ is an associated subring of $T_0$ which fails to be locally associated. Now let $R_1=T_0+3y_1T_0[y_1]$ and  $T_i=R_j=T_0[y_1]$ for $i\geq 1$ and $j\geq 2$. In this case, we have that $(R_0:T_0)=y_0^2T_0$; $(R_1:T_1)=3T_1$; and $(R_i:T_i)=T_1$ for all $i\geq 2$. Thus, $J_0=\bigcap_{i=0}^\infty (R_i:T_i)=3y_0^2T_0$. Similar to the construction in Example \ref{leading example}, we observe that $(2+3y_0)$ is comaximal to $J_0$ in $T_0$, since $(2+3y_0)(\frac{1}{2}-\frac{3}{4}y_0)=1-\frac{9}{4}y_0^2\equiv 1\modulo{3y_0^2}$. However, there is no unit $u\in U(T_0)=\{\pm 2^k|k\in\bZ\}$ such that $u(2+3y_0)$ is a unit in $R_0$ modulo $J_0$ (since $u$ would simultaneously need to satisfy $2u=\pm 1$ and $3u\in \bZ$). Then by Theorem \ref{la power series}, $R_\bNo[[x]]$ is not a locally associated subring of $R_\bNo[[x]]$. 
\end{example}

\begin{example}
    Let $\{y_i\}_{i=0}^\infty$ be a sequence of indeterminates and $\{p_i\}_{i=0}^\infty$ an enumeration of the prime numbers. Define $T_0=\bZ[p_0^{-1},y_0]$ and $R_0=\bZ+y_0\bZ+y_0^2T_0[y_0]$, and for each $i\in\bN$, define $T_i=T_{i-1}[p_i^{-1},y_i]$ and $R_i=T_{i-1}+y_iT_{i-1}+y_i^2T_i[y_i]$. Note that for each $i\in\bNo$, $(R_i:T_i)=y_i^2T_i$; thus, $J_0=\bigcap_{i=0}^\infty y_i^2T_i=\{0\}$ (since 0 is the only element of $T_0$ which is a multiple of $y_i$ for $i\in\bN$). Then by Theorem \ref{la power series}, $R_\bNo[[x]]$ is a locally associated subring of $T_\bNo[[x]]$. Interestingly enough, this holds despite the fact that $R_i$ fails to be locally associated in $T_i$ for every $i\in\bN_0$. This can easily be seen by observing that $(p_i+p_{i+1}y_i)\in T_i$ is comaximal to $(y_i)^2$ as an element of $T_i$, since $(p_i+p_{i+1}y_i)(\frac{1}{p_i}-\frac{p_{i+1}}{p_i^2}y_i)\equiv 1\modulo{y_i^2}$. However, there is no unit $u_i\in U(T_i)=\{\pm p_0^{a_0}\cdots p_i^{a_i}|a_i\in\bZ\}$ such that $u_i(p_i+p_{i+1}y_i)$ is an element of $R_i$ comaximal to $y_i^2T_i$, since $u_i$ would need to simultaneously satisfy $p_iu_i\in U(T_{i-1})$ and $p_{i+1}u_i\in T_{i-1}$.
\end{example}

\bibliographystyle{plain}
\bibliography{bibliography}

\end{document}